\documentclass[12pt]{amsart}
\usepackage{amsmath}
\usepackage{amssymb}
\usepackage{graphicx}
\usepackage[colorlinks=true, allcolors=blue]{hyperref}
\usepackage[T1]{fontenc}
\usepackage{tikz-cd}
\usepackage{pifont}
\usepackage{calligra}
\usepackage{calrsfs}
\DeclareMathAlphabet{\pazocal}{OMS}{zplm}{m}{n}
\usepackage{xcolor}
\usepackage{appendix}
\usepackage[margin=1.2in]{geometry}%
\usepackage{hyperref}
\usepackage{mathrsfs} 


\newtheorem{dfn}{Definition}
\numberwithin{dfn}{subsubsection}
\newtheorem{Theorem}{Theorem}
\numberwithin{Theorem}{subsubsection}
\newtheorem*{Theorem-non}{Theorem}
\newtheorem{rem}{Remark}
\numberwithin{rem}{subsubsection}
\newtheorem{lem}{Lemma}
\numberwithin{lem}{subsubsection}
\newtheorem{cor}{Corollary}
\numberwithin{cor}{subsubsection}
\newtheorem{prop}{Proposition}
\numberwithin{prop}{subsubsection}
\numberwithin{equation}{subsubsection}

\newcommand{\bA}{{\bf A}}
\newcommand{\bK}{{\bf K}}

\newcommand{\cO}{\mathcal{O}}

\newcommand{\K}{\mathbb{K}}

\newcommand{\A}{\mathcal{A}}
\newcommand{\C}{\mathbb{C}}
\newcommand{\hH}{\mathbb{H}}
\newcommand{\R}{\mathbb{R}}
\newcommand{\Z}{\mathbb{Z}}

\newcommand{\cT}{\mathcal{T}}

\newcommand{\sA}{\mathscr{A}}
\newcommand{\sE}{\mathscr{E}}

\newcommand{\cA}{\tilde{\mathscr{A}}}
\newcommand{\con}[1]{{\stackrel{\scriptscriptstyle{#1}}{\nabla}}\phantom{}} 

\title{On the geometry of K\"ahler--Frobenius manifolds $\&$
their classification}

\author{Noemie C. Combe}
\address{ University of Warsaw, Department of mathematics, MIMUW}
\email{n.combe@uw.edu.pl}
\thanks{The author acknowledges support from the project No. 2022/47/P/ST1/01177 co-founded by the National Science Centre and the European Union’s Horizon 2020 research and innovation program, under the Marie Sklodowska Curie grant agreement
No. 945339.\includegraphics[scale=0.15]{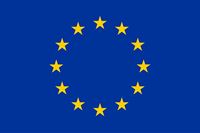}. The author thanks A. Szczepański and M.Halenda for interesting discussions about flat K\"ahler manifolds and for an invitation to the university of Gdańsk.}

\begin{document}

\begin{abstract}
The purpose of this article is to show that flat compact K\"ahler manifolds exhibit the structure of a Frobenius manifold, a structure originating in 2D Topological Quantum Field Theory and closely related to Joyce structure. As a result, we classify all such manifolds. It can be deduced that K\"ahler--Frobenius manifolds include certain Calabi--Yau manifolds, complex tori $T=\C^n/\Z^n$, generalized Hantzsche--Wendt manifolds, hyperelliptic manifolds and manifolds of type $T/G$, where $G$ is a finite group acting on $T$ freely and containing no translations. An explicit study is provided for the two-dimensional case. Additionally, we can prove that Chern's conjecture for K\"ahler pre-Frobenius manifolds holds. Lastly, we establish that certain classes of K\"ahler-Frobenius manifolds share a direct relationship with theta functions which are important objects in number theory as well as complex analysis. 
\end{abstract}
\maketitle
\,

{\bf Keywords:} Affine differential geometry; linear and affine connections; special connections and metrics on vector bundles (Hermite--Einstein and Yang--Mills); Local differential geometry of Hermitian/K\"ahler manifolds; theta functions.

\, 

{\bf MSC 2020:} Primary: 53A15; 53B05; 53C07; 53C55; 53D45. Secondary: 14K25

\,

\section{Introduction}

\subsection{}
The motivation for this article inscribes itself in the interaction between (differential) geometry and  Quantum Field theories. A part of this research program, initiated by E. Witten \cite{Witten1,Witten2} and Yu. I. Manin \cite{Man1,Man2}, led to fruitful interactions between quantum cohomology, Frobenius manifolds and Verlinde algebras which count theta functions. 

\, 

As mentioned in \cite[p.45]{Witten1}: {\sl ``If one considers a K\"ahler manifold $X$ with $c_1(X)=0$, then the supersymmetric sigma model is conformally invariant. Such models provide classical solutions to string theory."} Therefore, it is natural to consider K\"ahler manifolds with vanishing first Chern class $c_1(X)=0$.

\, 

We demonstrate in this article that flat compact K\"ahler manifolds (i.e. with $c_1=0$) possess the structure of a K\"ahler--Frobenius manifold. 
This involve isomonodromic pencils of flat connections on the tangent bundle of a complex manifold $M$. The Frobenius structure is also referred often as the {\it associativity equation} due to the fact that one has the structure of an associative, commutative algebra of Frobenius type on the tangent sheaf.  A close notion to the Frobenius structure is the Joyce structure \cite{J07}.

\, 

Classes of K\"ahler--Frobenius manifolds come naturally equipped with theta functions. This gives an explicit bridge between the notion of Frobenius manifolds and theta functions. 

\, 

A first example of such manifolds includes abelian varieties/complex tori. This is particularly interesting in relation to theta functions, since given a complex torus $T$  of dimension $g$, a line bundle $L$ defining a principal polarization and  a positive integer $s$, the space of level $s$ theta functions is given by $H^0(T,L^{\otimes s})$. 

\, 

The explicit relation between classes of Frobenius manifolds (being flat K\"ahler manifolds with $c_1=0$) and theta functions follows from Theorem \ref{T:Class} and Theorem \ref{T:Main}.

\subsection{} Flat K\"ahler manifolds are naturally related to models providing classical solutions to string theory. Therefore, it is natural to check whether they satisfy certain highly non-linear partially differential equations, the so-called  Witten--Dijkgraaf--Verlinde--Verlinde equations (WDVV equations). This equation were discovered in the 1990s in relation with the classification of 2 dimensional Topological Field Theories \cite{DVV,Du,Man1,Man2}. These equations are defined for Riemannian manifolds \cite[Lecture 1]{Du}. Much attention has been given to them due to their central role in the mathematical mirror symmetry problem.

\,

The notion of a Frobenius manifold is a {\it geometrization} of the WDVV Partial Differential equation (see \cite[Sec. 1, p.19]{Man1}). This terminology distinguishes the geometrical viewpoint from the analytical one (PDE equations). Working with {\it Hermitian manifolds} leads to requiring {\it extra} properties. In particular, we introduce a {\it hermitian WDVV equation}. 

\, 

By Lem. \ref{L:Kahler}, a manifold satisfying such an equation is {\it necessarily} K\"ahler and equipped with a holomorphic affine structure. The hermitian WDVV equation is as follows:
\begin{equation}\label{E:HWDVV}
 \forall a,b,\bar{c},\bar{d}, \quad \sum_{\bar{e}f}\Phi_{ab\bar{e}}g^{\bar{e}f}\Phi_{f\bar{c}\bar{d}}=\sum_{\bar{e}f} \Phi_{b\bar{c}\bar{e}}g^{\bar{e}f}\Phi_{fa\bar{d}},
 \end{equation}
where $\Phi$ is a real potential function and $ \Phi_{ij\bar{k}}=\frac{\partial^3\Phi}{\partial{z}^{i}\partial{z}^{j}\partial \bar{z}^k}$; $g$ is a non-degenerate symmetric bilinear form which is a K\"ahler metric. 

We shall discuss these manifolds in Sec.~\ref{S:WDVV} where we introduce a new object called {\it Frobenius bundles}. This new object provides a more practical formalism for describing the structure of Frobenius manifolds.

    \subsection{Results} In Theorem \ref{T:Main} we prove that {\it if a compact complex K\"ahler manifold $M$  has vanishing curvature then it is a Frobenius manifold}. K\"ahler Frobenius manifolds imply $c_1(M)=0$. But the converse is not true.  Examples of such K\"ahler--Frobenius manifolds include K\"ahler--Einstein manifolds (see Sec. \ref{S:3.1}, Proposition \ref{P:Kahler--Einstein}) and certain Calabi--Yau manifolds.
   
    \, 
    
    A classification of all K\"ahler- Frobenius manifolds is available in Theorem \ref{T:Class}. The list of complex Frobenius manifolds covers the following geometric objects: 

\begin{enumerate}
    \item complex tori $T=\C^n/\Z^n$, where $n\geq 1$; 
\item manifolds of type $T/G$, where $G$ is a finite group acting on $T$ freely and containing no translations. 
\item Generalized Hantzsche--Wendt manifolds (orientable); 
\item a certain family of Calabi--Yau manifolds 
\item  hyperelliptic manifolds.

\end{enumerate}
Those classes are particularly rich from the viewpoint of analysis and number theory. On the one hand side relations to Frobenius manifolds allow those object to satisfy a Monge--Ampère type of equation \cite{C24}. This can be  easily related to 
the Laplace eigenvalue problem for a domain $\mathscr{D}\subset \R^{2n}$ with the Dirichlet boundary
condition. In this setting, the problem is to find all functions $u:\overline{\mathscr{D}} \to \C $ not identically zero and satisfying 
$$\Delta u(x)=K x, \forall x\in \mathscr{D}$$ and some constant $K$ ($u$ vanishes on the boundary of the domain). For such Frobenius manifolds $T/G$ the eigenfunctions turn out to be trigonometric functions \cite{Cryst}. Note that these topics can be further related to the Fuglede and Goldbach conjectures.

On the other hand the classes related to the torus have direct relations to the multiplicative group of theta functions. If $V$ is a complex vector space  (dimension $n$) and $\Lambda$ is a lattice in $V$, then the quotient $T = V/\Lambda$ forms a complex torus. 
Take $L : V \times \Lambda \to \C$ and $J : \Lambda \to \C$ to be maps such that $L(x,l)$ are linear in $x$ for all element $l\in \Lambda$. A theta function on $V$ with respect to $\Lambda$ of type $(L,J)$ is a meromorphic function $H: V \to \C$ satisfying the relation
\[H(x+l)=e(L(x,l)+J(l))H(x)\]for all $l\in \Lambda$,
where $e(x)=e^{2\pi\imath x}$. The theta functions form a multiplicative group. Theta functions of same type form a vector space over $\C$. 

\, 

\subsection{Plan with results}

Section \ref{S:WDVV} introduces Frobenius bundles for real/complex numbers and Frobenius algebra bundles, as well as all the necessary material for the unfolding of our results ( \ref{S:Fbund}--\ref{S:1.3}).  The advantage of introducing Frobenius bundles is that it potentially allows a generalisation of the notion of Frobenius manifolds to manifolds over other algebras, of finite dimension. We present the key statement (Theorem \ref{T:equ}). The proof is outlined in the next section.

\,

 In section~\ref{S2:Proof}, we prove the statement mentioned in the first section i.e. Theorem \ref{T:equ}.  Sections \ref{S:HermitianFman} provides the complex version of the previous subsection \ref{S2:realcase}.

\,

In section~\ref{S:KFmfd}, we consider K\"ahler-Frobenius manifolds. We prove the Theorem \ref{T:Main} which states that a complex compact K\"ahler manifold with vanishing curvature is a  Hermitian Frobenius manifold and we discuss some properties around this statement. In particular, by  K\"ahler-Frobenius manifold we mean a K\"ahler manifold being a Hermitian Frobenius manifold. In Sec. \ref{S:Chern} we prove that Chern's conjecture holds for pre-Frobenius K\"ahler manifolds (Theorem \ref{T:Chern}).

\,

In section~\ref{S:Properties}, we investigate properties of K\"ahler-Frobenius manifolds, we particularly study the K\"ahler--Einstein case (section~\ref{S:3.1}). We consider some properties in Section \ref{S:Props} related to coverings by a torus and pre-Frobeniusity.

\, 

In section~\ref{S:2D}, the study of 2 dimensional  surfaces is provided, see Theorem \ref{T:ClassSurf}. We study all complex surfaces and discuss whether they can qualify as a Frobenius surface or not. The cases of complex tori and hyperelliptic curves satisfy the required properties, whereas K3 surfaces, ruled surfaces, Hopf surfaces do not. 

\, 

In Section~\ref{S:ClassN} we classify the K\"ahler-Frobenius manifolds. This leads to the classification theorem \ref{T:Class}. We study the pencil of connections for K\"ahler-Frobenius and show that they are of Hermitian--Einstein type. The final subsection concludes the paper with some final observations about Joyce structures and theta functions. 

\smallskip

\section{Frobenius bundles}\label{S:WDVV}
 A Frobenius manifold $M$ is a {\it geometrization} of the WDVV Partial Differential  Equation. The former is a geometric object; the latter is an analytical one. Both viewpoints coincide \cite{Man1,Man2}. 

\, 

According to \cite{Dub96} the  manifold $M$ admits the structure of a Frobenius manifold if:
\begin{itemize}
\item at any point of $M$ the tangent space has the structure of a Frobenius algebra; 
\item the invariant inner product $\langle-,-\rangle $ is a flat metric on $M$;
\item the unity vector field satisfied $\con{0}e=0$;
\item The rank 4 tensor $(\con{0}_W A) (X,Y,Z)$ is fully symmetric;
\item The vector field $E$ is determined on M such that $\con{0}(\con{0}E)=0$. 

\end{itemize}

The Euler vector field belongs to the class of affine vector fields. Its existence is tightly related to the fact that we have initially an affine structure on $M$. To see this it is enough to recall that the following statements are equivalent.

Let $\con{0}$ be the corresponding affine flat torsionless connection.
\begin{enumerate}
 \item $E$ is an {\it affine} vector field.
 \item $\con{0}(\con{0} E)=0$.
    \item $\con{0}_Y\con{0}_Z E=\con{0}_{\con{0}_YZ}E$ for all vector fields $Y,Z$ on $M$.
    \item The coefficients of $E$ are affine functions. Given, $$E=\sum_m E^{m}\partial_m$$ we have $E^{m}=a^m_j x^j + b^m$, where $a^m_j$ and $b^m$ are constants in $\R$.
\end{enumerate}

\, 

We shall therefore adopt a more concise definition of Frobenius manifolds, via Frobenius bundles, that stems from the definition of \cite[p.19]{Man1}.

The latter approach is constructed by first introducing the notion of a potential pre-Frobenius  manifold
(that is manifolds satisfying the pre-Frobenius axioms developed by Yu. Manin in \cite[p.19]{Man1} as well as the potentiality axiom) and then adding an extra requirement that the algebraic structure on the tangent sheaf $(T_M,\circ)$ is everywhere a Frobenius algebra (commutative, associative, unital with a bilinear symmetric map satisfying 
$\langle x\circ y, z\rangle = \langle x,y \circ z\rangle$ for $x,y,z$ elements in the algebra). In other words, we have an affine space of flat connections on $M$.

\, 

The topic of Frobenius manifolds has stimulated very different approaches and results \cite{CoMa,CoVa,Dub96,Fei,Hit,KaKP,KoMa,Man05,Man98,KonoMa,Sa,To04} (to cite only a few examples). 
 
\subsection{Frobenius bundles}\label{S:Fbund}

 \, 
 We introduce the notion of Frobenius bundles, as an attempt towards a more practical and geometric approach. 
 
 We argue that this tool allows a more general definition, for example if we want to extend the notion of Frobenius manifolds for manifolds defined over an algebra of finite dimension and will be the subject of future investigations. 

 \smallskip 
 
This Section is devoted to the introduction of central objects in the following statement.  
\begin{Theorem}\label{T:equ}
Let $(M,g)$ be a K\"ahler manifold. Then, the following statements are equivalent:
\begin{enumerate}

\item the Hermitian WDVV equation is satisfied;
    \item $M$ is a Hermitian Frobenius manifold;
    \item $M$ is equipped with a Frobenius bundle.
\end{enumerate} 
\end{Theorem}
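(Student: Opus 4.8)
The plan is to prove the three-fold equivalence by a cycle of implications $(1)\Rightarrow(2)\Rightarrow(3)\Rightarrow(1)$, with the bulk of the work being the unwinding of definitions and the careful bookkeeping of which tensor identity corresponds to which axiom. First I would fix notation: on the K\"ahler manifold $(M,g)$ write the potential function $\Phi$, set $\Phi_{ij\bar k}=\partial^3\Phi/\partial z^i\partial z^j\partial\bar z^k$, and define the multiplication on $T_M$ by $\partial_a\circ\partial_b=\sum_{\bar e,f}\Phi_{ab\bar e}g^{\bar e f}\partial_f$, using $g$ and its inverse to raise/lower the barred indices; commutativity of $\circ$ is immediate from the symmetry $\Phi_{ab\bar e}=\Phi_{ba\bar e}$, and the pairing $\langle\partial_a,\partial_{\bar b}\rangle=g_{a\bar b}$ is the candidate invariant inner product. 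The flat torsionless connection $\con{0}$ is the one in whose coordinates $\Phi$ is written (this is exactly the holomorphic affine structure guaranteed by Lem.~\ref{L:Kahler}), so that $\con{0}$-flatness of $g$ and $\con{0}e=0$ for the unit are built in.

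For $(1)\Rightarrow(2)$ I would show that the Hermitian WDVV equation \eqref{E:HWDVV} is precisely the associativity of $\circ$: expanding $(\partial_a\circ\partial_b)\circ\partial_{\bar c}$ and $\partial_a\circ(\partial_b\circ\partial_{\bar c})$ and contracting against $\partial_{\bar d}$ via $g$ reduces, after relabelling summation indices, to the two sides of \eqref{E:HWDVV}; the Frobenius (invariance) property $\langle x\circ y,z\rangle=\langle x,y\circ z\rangle$ follows from the total symmetry of $\Phi_{ab\bar c}$ in the sense that $\langle\partial_a\circ\partial_b,\partial_{\bar c}\rangle=\Phi_{ab\bar c}$. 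The remaining pre-Frobenius axioms — flatness of $g$, existence of the unit field $e$ with $\con{0}e=0$, and full symmetry of $\con{0}_W A$ where $A(X,Y,Z)=\langle X\circ Y,Z\rangle$ — are then read off: the potentiality axiom says $A=\con{0}{}^3\Phi$ up to the contraction with $g$, and $\con{0}A$ being a symmetric $4$-tensor is automatic because it is a fourth mixed partial of $\Phi$ in flat coordinates. One also records the Euler field via item (4) of the list in the excerpt. This produces the Hermitian Frobenius manifold structure of item~(2).

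For $(2)\Rightarrow(3)$ I would invoke the definition of Frobenius bundle introduced in Section~\ref{S:Fbund} (the material cited as \ref{S:Fbund}--\ref{S:1.3} preceding the theorem): a Frobenius bundle is, roughly, the tangent bundle $T_M$ together with the data of the affine pencil of flat connections $\{\con{0}+\lambda A\}_{\lambda}$, the fibrewise Frobenius algebra structure, and the flat pairing — so a Hermitian Frobenius manifold gives rise to a Frobenius bundle by simply assembling these already-present ingredients, and conversely a Frobenius bundle on $T_M$ whose connection pencil is induced by an affine structure recovers a Hermitian Frobenius manifold, giving $(3)\Rightarrow(2)$; then $(2)\Rightarrow(1)$ reverses the computation of the second paragraph, since associativity of the fibrewise algebra is equivalent to \eqref{E:HWDVV}. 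Closing the cycle $(1)\Rightarrow(2)\Rightarrow(3)\Rightarrow(1)$ (or equivalently $(1)\Leftrightarrow(2)$ and $(2)\Leftrightarrow(3)$) completes the proof.

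I expect the main obstacle to be the equivalence between the existence of a global potential $\Phi$ (and hence the genuine WDVV/Hermitian-WDVV formulation) and the abstract Frobenius-bundle data: locally everything is a routine translation, but to go from the bundle-theoretic structure back to a single real potential function $\Phi$ one must verify the relevant closedness/integrability condition — that the symmetric $3$-tensor $A$ is $\con{0}$-closed in the appropriate sense so that $A=\con{0}{}^3\Phi$ for some $\Phi$ — and confirm it globalizes; this is where the K\"ahler hypothesis and the flat (affine) structure from Lem.~\ref{L:Kahler} do the real work, guaranteeing both the existence of flat coordinates and that the potentiality obstruction vanishes. The symmetry conventions for raising barred versus unbarred indices with $g$ also need to be pinned down once and used consistently, or the two sides of \eqref{E:HWDVV} will not visibly match the two bracketings of the product.
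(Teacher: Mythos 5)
Your proposal is correct and follows essentially the same route as the paper: the paper also reduces (1)$\Leftrightarrow$(2) to the identification of the Hermitian WDVV equation with associativity of $\circ$ (Lem.~\ref{L:1to2}, phrased there via the curvature identity $R_{X,Y}(Z)=X\circ(Y\circ Z)-Y\circ(X\circ Z)$ and the coordinate formula for $R_{a\bar b c\bar d}$, which is the same computation as your direct expansion of the two bracketings), and handles (2)$\Leftrightarrow$(3) by the same definitional unwinding, with associativity re-expressed as flatness of the pencil $\con{0}+\lambda(X\circ Y)$ (Lem.~\ref{L:2-3R}, \ref{L:2-3C}). Your closing remark about the potentiality/globalization obstruction is a point the paper itself does not address more carefully than you do.
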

To prove this statement, we start by introducing the Hermitian WDVV equation (Sec. \ref{S:WDVV}). Prerequisites for Frobenius manifolds are outlined in Sec. \ref{S:1.2}. Frobenius bundles are defined in Sec.\ref{S:1.3}. The extra data provided by the complex structure is explained in Sec.\ref{S:HermitianFman}.

We show that (1) is equivalent to (2) in Lem. \ref{L:1to2}. The equivalence between (2) and (3) is proved separately in the real and complex case. In the real case, this is Lem. \ref{L:2-3R}. In  the complex, it is Lem. \ref{L:2-3C}. 

\medskip 
\subsection{Recollections}
We refer to \cite{YK} for recollections on the realm of complex manifolds  (see Fig. \ref{fig:relations}). Let $M$ be a differentiable manifold. A tensor field $J$ is an almost complex structure if for every point $x\in M$, $J$ is an endomorphism of the tangent space at $x$ such that $J^2=-Id$. Every almost complex manifold has even (real) dimension. 

\smallskip 

Let $(M,J)$ be an almost complex manifold. If $M$ is equipped with a Riemannian  metric $g$ such that $g(JX,JY)=g(X,Y)$ for any vector fields $X, Y$ on $M$ then it is called an almost Hermitian manifold. A complex manifold with a Hermitian metric is a Hermitian manifold.

\smallskip

Let $M$ be as above. It is a complex manifold if and only if $M$ admits a linear connection $\nabla$ such that $\nabla J=0$ and the torsion is null.  

\smallskip 

Let $(M,J)$ be an almost complex manifold with Hermitian metric $g$. Let $\nabla$ be the covariant differentiation of the Riemannian connection defined by $g$.
Let the fundamental 2-form $\varphi$ on M be defined as $\varphi(X,Y)=g(X,JY)$ for all vector fields $X,Y$ on $M$.
The Hermitian metric is called a K\"ahler metric if the fundamental 2-form $\varphi$ is closed.  An almost complex manifold with a K\"ahler metric is an almost K\"ahler manifold.  A complex manifold with a Kahler metric is a K\"ahler manifold. 
 
 \smallskip

The almost complex structure becomes pseudo-complex if the Nijhenuis tensor vanishes. 
\begin{figure}[ht]
    \centering
\includegraphics[scale=0.6]{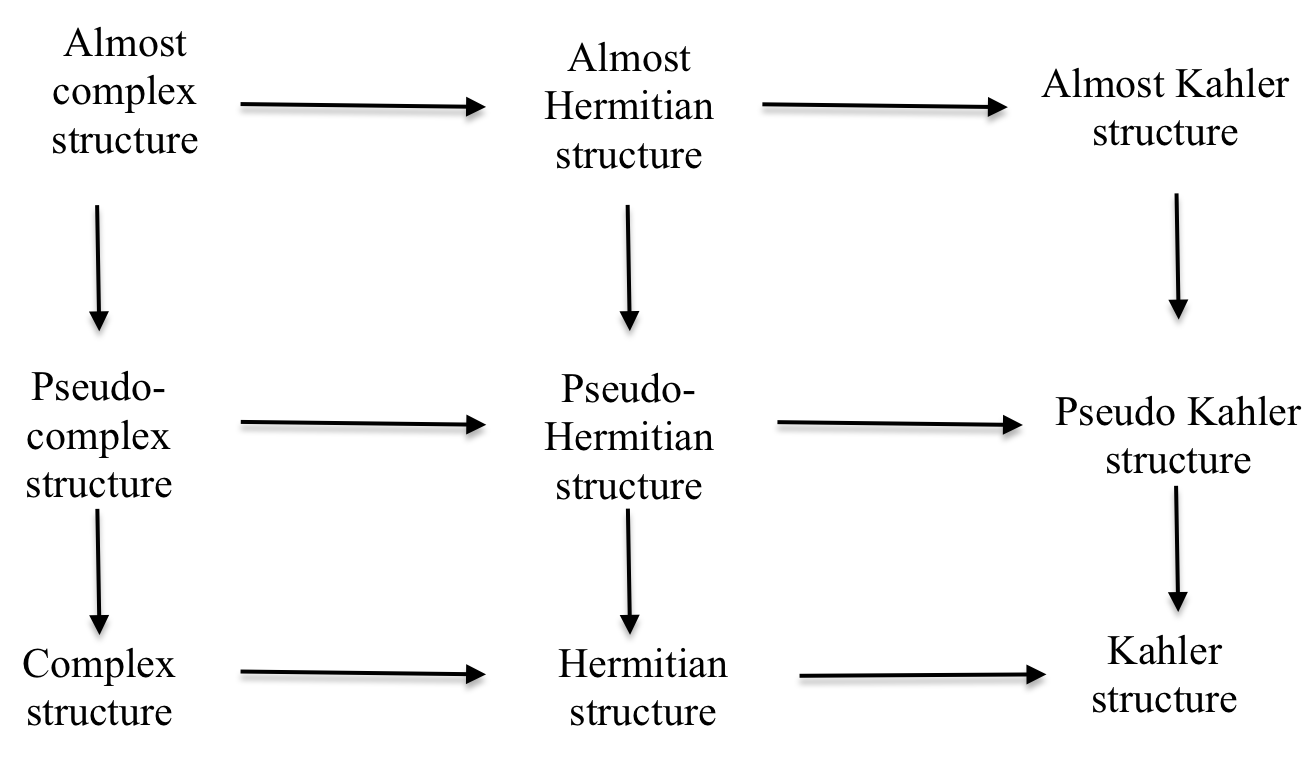}
    \caption{Relations between spaces}
    \label{fig:relations}
\end{figure}

 \subsection{Hermitian WDVV equation}\label{S:HWDVV}
Since we would like to consider complex manifolds, it is necessary to generalise the WDVV non linear PD equation for complex structures. We introduce the  Hermitian WDVV equation, for the convenience of the reader, as follows. 
\begin{dfn}
 Assume $(M,g)$ is a compact Hermitian  manifold. The Hermitian WDVV equation is satisfied if the following holds: 
    \begin{equation}\label{E:HWDVV}
        \forall a,b,\bar{c},\bar{d}, \quad \sum_{\bar{e}f}\Phi_{ab\bar{e}}g^{\bar{e}f}\Phi_{f\bar{c}\bar{d}}=\sum_{\bar{e}f} \Phi_{b\bar{c}\bar{e}}g^{\bar{e}f}\Phi_{fa\bar{d}},
    \end{equation}
where:

\smallskip 
\begin{itemize}

   \item$\Phi$ is a real potential function on $M$
   \item[] 
   
   \item $ \Phi_{ij\bar{k}}=\frac{\partial^3\Phi}{\partial{z}^{i}\partial{z}^{j}\partial \bar{z}^k}$ in local coordinates; 
   \item[]
   
   \item $g$ is a non-degenerate K\"ahler metric on $M$ and  we have $(g^{\bar{e}f})=(g_{\bar{e}f})^{-1}$.
\end{itemize}
\end{dfn}

\subsection{Frobenius manifolds: prerequisites, recollections, notations}\label{S:1.2}

\medskip 

We refer to \cite{Man1,Man2} for a definition of (pre-)Frobenius manifolds. We  introduce/ recall important prerequisites and notions. 


 \subsubsection{}

 {\bf Man} is the category of analytic manifolds over a field $\K$ (being $\R$ or $\C$). 
 Let $M \in Ob({\bf Man})$ be an object of {\bf Man}. Denote by $\cO_M$ (resp. $\Omega^1_M$) the sheaf of holomorphic (resp. holomorphic 1-forms) on $M$. 

\smallskip 

 \subsubsection{Affine connections} 

\smallskip 

 Assume $\mathscr{E}$ is the space of sections of the tangent bundle over $M$ (in a more general context it can be a quasi coherent sheaf of $\cO_M$-modules). 

\smallskip 

 A connection on $\mathscr{E}$  is a homomorphism $\nabla$ of abelian sheaves $\nabla: \mathscr{E}\to \Omega^1\otimes \mathscr{E}$ such that the Leibnitz rule is satisfied. That is $$ \nabla(fe)=f\nabla(e)+df\otimes e,$$ where $f$ is a smooth function and $e$ is a section of $\mathscr{E}$ defined over an open subset in $M$.

\smallskip 

 Consider the class of affine (compact) manifolds.

\smallskip 

\subsubsection{Affine structures}

\smallskip 

 An affine structure on an $n$-dimensional manifold $M$ is defined by a collection of coordinate charts $\{U_a,\phi_a\}$, where $\{U_a\}$ is an open cover of $M$ and $\phi_a:U_a\to \R^n$  is a local coordinate system such that the coordinate change $\phi_b\circ \phi_a^{-1}$ is an affine transformation of $\phi_a(U_a\cap U_b)$ onto $\phi_b(U_a\cap U_b)$. 

\smallskip 

An affine structure on $M$ induces a flat and torsionless affine connection $\nabla_0$ on $M$.

\subsubsection{Affine manifold}
An affine manifold is a manifold with an affine structure. The fundamental group of a compact complete flat affine manifold is an affine crystallographic group. 
\smallskip

\subsubsection{Crystallographic groups}

A  group $\Lambda$ is an $n$-crystallographic if $\Lambda$ contains a normal, torsion-free, maximal abelian subgroup of rank $n$ and finite index. A crystallographic group satisfies the short exact sequence 
\[0\longrightarrow V \longrightarrow \Lambda \longrightarrow P\longrightarrow 1,\] with 
$P\leq GL(n,\Z)\cong Aut(V)$ is a finite group acting faithfully on $V$. A complex crystallographic group arises as a discrete subgroup $\Lambda \subset Iso(\C^n)$ such that $\C^n/\Lambda$ is compact, where  $Iso(\C^n)$ is the group of biholomorphisms in $\C^n$ preserving the standard Hermitian metric.  The class of crystallographic groups being torsion-free are called Bieberbach groups.

\medskip 

\subsubsection{}
In the language of vector bundles, a manifold (over $\K$) with an affine structure is equipped with a tangent bundle, where the $G$-structure is given by the group of affine transformations $Aff(n)=GL(n,\K)\rtimes \K^n$ and $GL(n,\K)$ is the general linear group over the field $\K$. 

\medskip

\subsubsection{Metrics} Let $(\mathscr{E},M,\pi)$ be a fiber bundle over $M$ with structure group $G$. 
Assume that $M$ and the fiber space are equipped with a (non degenerate) bilinear symmetric form.  
For each $x\in M$, there exists a Hermitian (resp. Riemannian) inner product on the fiber $\mathscr{E}_x$ defined as:
 $$\langle -,- \rangle:\, \mathscr{E}_x\times \mathscr{E}_x\to \C.$$
For any open in $M$ and sections $\xi, \eta$ in $\mathscr{E}$ the mapping $x\mapsto  \langle \xi,\eta \rangle_x$ is differentiable.

\subsection{Frobenius Algebra bundles}\label{S:1.3} We introduce Frobenius algebra bundles.

\subsubsection{Algebras}
Assume $(\A,\circ)$ is an algebra of rank $n$. Then, the multiplication operation: 
$$\circ:\A\times \A\to \A \quad \text{is defined by}\quad e_i\circ e_j=\sum_{k} C_{ij}^k e_k,$$ where  $C_{ij}^k$  are the structure constants of $\A$ and $e_i$ are generators of $A$. 
\begin{itemize}

    \item[---]  The algebra $\A$ is {\it commutative} if the commutator vanishes:
    \begin{equation}\label{E:Com}
       \bK_{\alpha \beta}^\gamma=C_{\alpha \beta}^\gamma-C_{\beta\alpha }^\gamma=0.
    \end{equation}

    \item[---]   The algebra $\A$ is {\it associative} if the associator vanishes:

\begin{equation}\label{E:Ass}
    \bA_{\alpha \beta \gamma}^b=C_{\alpha \beta}^\delta C_{\delta \gamma}^b-C_{\alpha\beta}^b C_{\beta\gamma}^\delta=0.
\end{equation}

\end{itemize}
\medskip 
\subsubsection{Frobenius algebra} 

A {\it Frobenius algebra} $(\A,\circ)$ over $\K$ is an associative, commutative, unital algebra with multiplication operation $``\circ"$
 equipped with a symmetric bilinear form $\langle-,-\rangle$ satisfying 
 \begin{equation}\label{E:bili}
     \langle x\circ y, z\rangle=\langle x,y\circ z\rangle,\quad  \forall x,y,z \in \A.
 \end{equation}
\medskip 

\subsubsection{Pre-Frobenius algebra} 

A {\it pre-Frobenius algebra} is an algebra $(\A,\circ)$  satisfying commutativity (i.e. Equation \ref{E:Com}) and Equation  \ref{E:bili}. It is not a Frobenius algebra because it is not associative. So, we have the table in Fig.\ref{fig:pre+Frobenius}.

\medskip

\begin{figure}[ht]
    \centering
\includegraphics[scale=0.6]{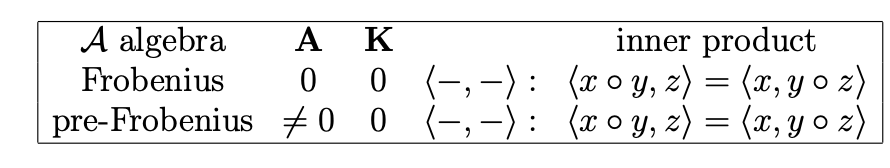}
    \caption{pre-Frobenius versus Frobenius algebras}
    \label{fig:pre+Frobenius}
\end{figure}


\subsubsection{Algebra bundle}

 A {\it bundle algebra} with base $M$ is a vector bundle ${\pazocal{E}}=(\sE(\A),M,\pi)$ endowed with a bilinear morphism such that each fiber $\sE_x=\pi^{-1}(x)$ for any $x\in M$ is endowed with the structure of the algebra $\A$. Given $x\in M$, if the algebra $\A_x$ has an identity element $\epsilon_x$ then the mapping  $x\mapsto \epsilon_x$ is a unit section of $\sE$. 

\medskip 

 Algebra bundles can be obtained as a result of left(/-right) regular representations of an algebra of rank $n$ in a typical fiber of a vector bundle $\pazocal{E}$. 
In this context, the structure group $Gl_n$ of the bundle $\pazocal{E}$ is reduced to the {\it subgroup} $\tilde{G}$ of matrices commuting with the structural affinors of the algebra $\A$. The latter is denoted $\underset{b}{\hat{S}}=(C^\alpha_{b\beta})$ and $C^\alpha_{b\beta}$ are structure constants of $\A$.

 It is assumed that in the adjoint principal bundle $P(M,\tilde{G})$ of admissible frames $R_x$ the matrices $\hat{S}_b$ (and thus structure constants of $\A$) are constant. For instance they have a normal Jordan form. 

The connection 1-forms $\omega_\beta^\alpha$ on this bundle take values in the Lie algebra corresponding to $\tilde{G}$.
We have: \[\omega_\beta^\alpha = \lambda^b(x^s)\underset{b}{R}_{\beta}^\alpha=\lambda^b(x^s)C^\alpha_{b\beta} ,\]
where  $\lambda^b(x^s)\in \Omega^1_M$ are linear forms on $M$ and $\underset{b}{R}_{\beta}^\alpha$ are affinors given by structure constants.
The connection coefficients are given by

\[\Gamma_{ib}^{\alpha}=\lambda_i^b(x^s)C_{\beta b}^{\alpha}\] where $\lambda_i^b(x^s)\in \Omega^1_M$ and $C_{\beta b}^{\alpha}$ are structure constants

\medskip



 \subsubsection{Frobenius algebra bundle}
Let $M$ be a Riemannian manifold, smooth and finite dimensional. 
Assume $\pazocal{E}=(TM,M,\pi)$ is the tangent vector bundle. Denote by $Aff(n)$ the group of affine transformations. 

\begin{dfn} Let $M$ be an $n$-dimensional manifold.
A Frobenius (algebra) bundle $\pazocal{E}$ is an algebra bundle with the following conditions:

\smallskip 

\begin{enumerate}
   \item[---] $\pazocal{E}$ is a flat bundle with structure group $Aff(n)$, equipped with an affine flat torsionless connection $\con{0}$.
      \item[]
    \item[---] $\pazocal{E}$ is a Frobenius algebra bundle, where for every $x\in M$ each fiber $\sE_x$ is a Frobenius algebra $(A_x,\circ)$.
   \item[]
   
     \item[---] For every $x\in M$ there exists on the fiber $\sE_x$ a Riemannian inner product
 $$g=\langle -,- \rangle:\, \sE_x\times \sE_x\to \R.$$ 


     \item[]

    \item[---] There exists everywhere locally on $M$ a real valued potential function $\Phi$. There exists rank three symmetric tensor 
    given in local coordinates by $C_{ijk}=\partial_i\partial_j\partial_k\Phi$ where $\partial_l=\frac{ \partial}{ \partial x^l}$.
    
   \item[]

    \item[---] Structure constants of the algebra $\A_x$ are given by relations: 
    $$C_{ij}^k=g^{ek}C_{eij},$$
    where $(g_{ek})^{-1}=(g^{ek})$.

\end{enumerate}
\end{dfn}

\medskip 
\begin{rem}
Assume $M$ is an $n$-dimensional manifold.
The manifold $M$ comes equipped with a pre-Frobenius bundle $\pazocal{E}$ if the algebra bundle is a pre-Frobenius algebra bundle.

\smallskip

    The important difference between a pre-Frobenius bundle and a Frobenius bundle relies on the structure of the algebra $\A_x$:
    how  structure constants are defined and on the associator $\bA$. To summarise:
\[
\begin{cases}
 \forall x\in M,\quad \text{if}\quad \bA =0 & \text{then}\quad \sE_x\quad \text{is a Frobenius algebra}; \\

\smallskip

 \forall x\in M,\quad \text{if}\quad \bA \neq 0 & \text{then}\quad  \sE_x \quad \text{is a pre-Frobenius algebra}.
\end{cases}\]

\end{rem}

\section{Proof (of Theorem \ref{T:equ})}\label{S2:Proof}
This section is devoted to proving Theorem \ref{T:equ}.

For simplicity, we separate proofs for the real and complex cases. 
\subsection{Real case}\label{S2:realcase}
We omit the proof of equivalence between (1) and (2) in the real case, since this follows from the literature \cite{Man1}. 

\,

We prove the equivalence between statement (2) and statement (3).
\begin{lem}\label{L:2-3R}
Let $M$ be a Hessian manifold (finite dimensional). Then the following are equivalent:
\begin{itemize}
    \item[(2)] $M$ is a Frobenius manifold;
    \item[(3)] $M$ is equipped with a Frobenius bundle.
\end{itemize} 
\end{lem}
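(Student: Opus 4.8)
The plan is to unwind both sides of the equivalence into their defining data and match them piece by piece. A Frobenius manifold (in the sense recalled from \cite{Dub96,Man1}) on a Hessian manifold $M$ consists of: a flat torsionless connection $\nabla_0$ coming from the Hessian (affine) structure; a flat metric $g$; a potential $\Phi$ with $C_{ijk}=\partial_i\partial_j\partial_k\Phi$; the resulting multiplication $e_i\circ e_j = \sum_k C_{ij}^k e_k$ with $C_{ij}^k = g^{ek}C_{eij}$ on the tangent spaces; and the requirement that this multiplication be associative (equivalently, $(\nabla_0 A)$ fully symmetric, equivalently the WDVV equation). A Frobenius bundle on $M$ is an algebra bundle on $TM$ with: flat structure group $Aff(n)$ and flat torsionless $\nabla_0$; each fiber a Frobenius algebra; a fiberwise Riemannian inner product $g$; a local potential $\Phi$ with $C_{ijk}=\partial_i\partial_j\partial_k\Phi$; and structure constants $C_{ij}^k=g^{ek}C_{eij}$. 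The two lists are visibly the same data; the only work is to check that the \emph{axioms} on each side imply those on the other.

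First I would show $(2)\Rightarrow(3)$: given a Frobenius manifold, take $\sE = (TM,M,\pi)$, equip each fiber $E_x = T_xM$ with the algebra structure $\circ$, the metric $g_x$, the unit $\epsilon_x = e$ (the identity vector field evaluated at $x$), and observe that the Frobenius-algebra axioms (commutativity from symmetry of $C_{ijk}$, associativity from the WDVV/Frobenius condition, the invariance $\langle x\circ y,z\rangle = \langle x,y\circ z\rangle$ from $C_{ijk}=g_{kl}C_{ij}^l$ being totally symmetric) hold fiberwise. The flatness of $\sE$ with structure group $Aff(n)$ is exactly the statement that the Hessian/affine structure of $M$ gives a flat torsionless $\nabla_0$; the reduction of the structure group to the subgroup $\tilde G$ commuting with the structural affinors $\hat S_b=(C^\alpha_{b\beta})$ is possible because $\nabla_0 A$ is symmetric, so in $\nabla_0$-flat coordinates the $C_{ij}^k$ vary as derivatives of $\Phi$ and the connection $1$-forms take the required shape $\omega_\beta^\alpha = \lambda^b C^\alpha_{b\beta}$. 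Conversely, for $(3)\Rightarrow(2)$ I would read off from the Frobenius bundle data precisely the bullet-point definition of a Frobenius manifold: flat metric and flat torsionless connection are given; the potential $\Phi$ and $C_{ijk}=\partial_i\partial_j\partial_k\Phi$ give the multiplication on the tangent sheaf; fiberwise associativity of the Frobenius algebras is the pointwise WDVV equation, which globalizes to $\nabla_0 A$ fully symmetric; the unit section supplies $e$ with $\nabla_0 e = 0$ (flatness of the unit section in the flat bundle); and one notes the Euler field is not part of Manin's pre-Frobenius+potentiality package, so its presence or absence is consistent on both sides.

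The step I expect to be the genuine obstacle is the identification of the connection-theoretic condition ``$\sE$ is a flat algebra bundle with structure group reduced to $\tilde G$'' with the differential-geometric condition ``$\nabla_0 A$ is a fully symmetric tensor'' (equivalently the local existence of the cubic potential $\Phi$). In one direction this requires showing that if the multiplication tensor $C_{ij}^k$ is $\nabla_0$-covariantly a totally symmetric $(0,3)$-tensor after lowering an index, then in affine coordinates there is locally a function $\Phi$ with $C_{ijk}=\partial_i\partial_j\partial_k\Phi$ — a Poincaré-lemma argument using symmetry of the partial derivatives. In the other direction I must verify that the specific form $\Gamma_{ib}^{\alpha}=\lambda_i^b C_{\beta b}^{\alpha}$ forced by the $\tilde G$-reduction is compatible with $\nabla_0$ being the \emph{trivial} flat connection on $TM$ in affine charts, i.e. that the algebra-bundle connection and the affine connection agree, so that ``flat bundle'' genuinely means ``affine-flat $M$.'' I would handle this by working throughout in $\nabla_0$-affine local coordinates, where $g$ is the Hessian $g_{ij}=\partial_i\partial_j\Phi$ (up to the usual normalization) and all the tensorial identities become literal symmetry statements about partial derivatives of $\Phi$; once everything is written in these coordinates the equivalence is a matter of comparing two bullet lists, and I would state this explicitly rather than belabor the routine index manipulations.
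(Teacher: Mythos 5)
Your proposal is correct and follows essentially the same route as the paper: both arguments unwind the two definitions into their constituent data (flat $Aff(n)$-structure with torsionless $\con{0}$, compatible Hessian metric, local potential $\Phi$ with $C_{ijk}=\partial_i\partial_j\partial_k\Phi$, multiplication with structure constants $C_{ij}^k=g^{ek}C_{eij}$ identified with the Christoffel symbols of the covariant derivative, fiberwise Frobenius-algebra axioms) and match them term by term. The only notable difference is cosmetic: the paper encodes associativity as flatness of the pencil $\con{\lambda}=\con{0}+\lambda(X\circ\,\cdot\,)$ rather than checking it fiberwise, while you supply somewhat more detail on the $\tilde{G}$-reduction of the structure group and the Poincar\'e-lemma step for recovering $\Phi$, both of which the paper leaves implicit.
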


Before the beginning of the proof we introduce a few notations. $TM$ stands for the tangent bundle; $T_M$ for the tangent sheaf; $\Gamma(TM)$ are sections of the tangent bundle. 

\, 
\begin{proof}
    
A central ingredient of the proof resides in the notion of pre-Frobenius manifolds. 

\, 
In order to define a pre-Frobenius manifold, it is necessary at first to have an affine manifold $M$. This manifold is equipped with a flat affine connection $\con{0}$. 

\, 

From this basic assumption, we can obtain flat coordinates i.e. coordinates verifying $\con{0}(dx^i)=0$. One defines a metric $g$, compatible with $\con{0}$.  

\, 

Assuming the potentiality axiom on $M$ holds (this states that everywhere locally on $M$ there exists a smooth potential function $\Phi$) one can define a metric tensor which, in those flat vector fields, is a Hessian (resp. K\"ahler) type of metric. Another important component is the existence of a rank three symmetric tensor, which in the flat vector fields is obtained by taking the partial derivatives of $\Phi$ (to the third order). 

\, 

The differential geometric setting allows us to define naturally a multiplication operation $\circ$ on the tangent sheaf. By construction, this algebra is unital and commutative. Given flat vector fields, the constant structures of the algebra are induced by  Christoffel symbols. The final question concerning the associativity of the algebra turns out to be crucial since the pre-Frobenius manifold is a Frobenius manifold if the algebra is associative. 

\, 

More formally, we express the above statements in the following list. 
\begin{enumerate}
 \item[] 
 
\item $(M,\con{0})$ is a Riemannian manifold equipped with an affine structure, with $\con{0}$ being a flat affine  torsionless connection on $T_M$, given by $$\con{0}:T_M\to \Omega^1_{M}\otimes_{M}T_{M},$$ where $\Omega_{M}^1$ is the sheaf of 1-forms on ${M}$.
   \item[] 
    \item There exists a non-degenerate symmetric bilinear form on the tangent bundle $g$, where $g$ is compatible with the connection $\con{0}$. 
       \item[] 
       
    \item  There exists a rank three symmetric tensor $A$ satisfying for flat vector fields $X,Y,Z$ the relation $A(X,Y,Z)=g(X\circ Y,Z)=g(X,Y\circ Z)$.

\medskip 
 
The multiplication law $\circ$ on $T_{M}$ is given by: \[X\circ Y:= i_X(\widehat{A})(Y),\] 
where $\widehat{A}$ is a global section lying in $\widehat{A}\in \Omega^1_{M}\otimes_{\cO_{M}}End(T_{M})$. For an element in the endomorphism $\tilde{F}\in End(T_{M})$ and $df\in\Omega_{M}^1$ ($f$ is a smooth function) we get $i_X(df\otimes G):= Xf \cdot \tilde{F}.$ More specifically, in local flat coordinates the multiplication operation is defined as  
\begin{equation}\label{E:circ}
 (\star)\quad   \partial_a\circ \partial_b =\sum_{c}A_{ab}^c\partial_c,
\end{equation}
where $A_{ab}^c=\sum_eA_{abe}g^{ec}$. 

\smallskip 

\item If everywhere locally there exists a potential function $\Phi$ such that $A(X,Y,Z)=(XYZ)\Phi$, then this pre-Frobenius manifold is potential. 
\item[]

In particular, using  local (flat) coordinates, denoted $(x^a)$, one has the following situation: 
$$A_{abc}=\partial_a\partial_b\partial_c \Phi$$ where $\Phi$ is a real potential function. In general, we denote it also as $A(X,Y,Z)=(XYZ)\Phi$, where $X=\frac{\partial}{\partial x^a},Y=\frac{\partial}{\partial x^b},Z=\frac{\partial}{\partial x^c}$ are flat vector fields.

\smallskip

\item {\it In fine} if a potential pre-Frobenius manifold satisfies everywhere the associativity condition: $(X\circ Y)\circ Z=  X\circ (Y\circ Z)$ then this is a Frobenius manifold.
\end{enumerate}

\medskip 

\, Now, we prove our statement. Let $M$ be an $n$-dimensional Riemannian manifold.

\,

 Assume (1) is true. Therefore,  $M$ is equipped with an affine structure. By definition, this implies that the transition functions are elements in $Aff(n).$ By construction of the vector (tangent) bundle, the tangent bundle $E$ is endowed with a structure group $Aff(n).$ By definition of an affine structure, this tangent bundle is equipped with a flat torsionless affine connection $\con{0}$.

\, 

 The existence of (2) equips $E$ with a symmetric bilinear form: a Riemannian metric, compatible with $\con{0}$.

\,

Let us analyse the meaning of Equation \ref{E:circ}.
In this Riemannian geometry framework, the multiplication operation is given by the covariant derivative $\con{0}_X(Y)$. 
Given  flat vector fields $X,Y$ the multiplication is obained by putting $\con{0}_X(Y):=X\circ Y.$
Objects $A_{ij}^k$ in this Riemannian context are identified with the Christoffel symbols $\Gamma_{ij}^k.$ 

\,

 If (3) is satisfied, then at each point $x\in M$ there are sections $s_1(x),\cdots, s_n(x) \in \Gamma(TM)$ in the fiber such that $s_i(x)\circ s_j(x)=\sum\limits_k A_{ij}^ks_k(x)$, where 
$A_{ij}^k=A_{ije}g^{ek}$ and $\circ$ is defined as above. The existence of a Riemannian metric on $TM$ provides $TM$ with a bilinear symmetric form $\langle-,- \rangle$.

\, 

Assume  (1), (2) and (4) is true. This  combination implies that there exists a  metric, compatible with $\con{0}$. By the construction of a Frobenius manifold it is necessarily a Hessian metric $g$ on $TM$. Everywhere locally there exists a real potential function $\Phi:M \to \R$ such that in local coordinates the metric is $g_{ij}=\partial_i\partial_j\Phi$. 

\,

(5) is true if and only if the affine space of connections 
$\con{\lambda}$ is flat, where 
$\con{\lambda}$ is a pencil of connections depending on an even parameter such that 
$\con{\lambda}: T_M\to \Omega^1_M\otimes T_M$ and $$\con{\lambda}:= \con{0} +\lambda (X\circ Y).$$

\smallskip

It is a routine exercise to show that the flatness of the pencil of connection is equivalent to having associativity on the level of the vector fields. In other words, if (5) is true, then one has an algebra bundle, where each fibre is isomorphic to an $n-$dimensional Frobenius algebra.  

\end{proof}

\subsection{Complex case}\label{S:HermitianFman}
We generalise the previous constructions to the complex case.

 \medskip 
Assume $M$ is a complex manifold. Then, there exists a positive definite Hermitian metric on $M$. The metric is given by 
 \begin{equation}
     ds^2=g_{a {\bar b}}(z,\overline{z})dz^ad\overline{z}^b. 
 \end{equation}
 The Hermitian metric over $M$ is naturally a hermitian metric on the tangent bundle. 

 \medskip 

\subsubsection{}\label{S:Kahler} The existence of a {\it Hermitian} WDVV equation requires the following necessary (but not sufficient) conditions:

\smallskip 

\begin{itemize}
    \item[---] $(M,g)$ is a complex manifold with holomorphic affine structure and hermitian metric $g$. 

 \item[]
 
     \item[---] There exists on $M$ a  Levi--Civita connection $\con{0}$.  

      \item[]

      \item[---] Everywhere locally there exists a potential function $\Phi$ such that the metric $g$ can be expressed as $g_{a {\bar b}}=\frac{\partial^2\Phi(z,\bar{z})}{\partial z^a \partial \bar{z}^b}$. 

      \item[]

       \item[---] There exists a rank three symmetric  tensor which can be expressed (modulo a suitable change of coordinates) as $A=(XY\bar{Z})\Phi$. 

       \item[]

\end{itemize}

\subsubsection{K\"ahler geometry}\label{S:Kahler2}
We prove that:

\begin{lem}\label{L:Kahler}
 Let $(M,g)$ be a compact hermitian manifold. Assume $M$ satisfies the conditions in Section \ref{S:Kahler}. Then, $(M,g,A)$ is necessarily a K\"ahler manifold.
\end{lem}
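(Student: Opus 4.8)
The plan is to isolate, among the hypotheses of Section~\ref{S:Kahler}, the one that actually forces the K\"ahler condition --- the existence of a \emph{local} K\"ahler potential $\Phi$ with $g_{a\bar b}=\partial_a\partial_{\bar b}\Phi$ --- and then to run the standard $\partial\bar\partial$-computation showing that the fundamental $2$-form is closed.

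First I would work in a chart around an arbitrary point $p\in M$ furnished by the holomorphic affine structure, with holomorphic coordinates $(z^1,\dots,z^n)$ in which $\con{0}$ is the flat torsionless coordinate connection, together with the real-valued function $\Phi$ supplied by the third hypothesis of Section~\ref{S:Kahler}, so that $g_{a\bar b}=\partial_a\partial_{\bar b}\Phi$ on that chart. Reality of $\Phi$ gives $\overline{g_{a\bar b}}=g_{b\bar a}$, so $g$ is a genuine Hermitian metric (positivity being assumed), and I would record the coordinate expression of its fundamental $2$-form: up to a fixed nonzero real constant, $\varphi=\imath\sum_{a,b}g_{a\bar b}\,dz^a\wedge d\bar z^b$. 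I would then recall the standard fact that a Hermitian manifold is K\"ahler exactly when $d\varphi=0$, and that, decomposing $d=\partial+\bar\partial$, this is equivalent to the two symmetry relations
\[
\frac{\partial g_{a\bar b}}{\partial z^c}=\frac{\partial g_{c\bar b}}{\partial z^a}
\qquad\text{and}\qquad
\frac{\partial g_{a\bar b}}{\partial\bar z^c}=\frac{\partial g_{a\bar c}}{\partial\bar z^b}
\]
for all indices $a,b,c$.

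The verification is then immediate: inserting $g_{a\bar b}=\partial_a\partial_{\bar b}\Phi$ yields $\partial_c g_{a\bar b}=\partial_c\partial_a\partial_{\bar b}\Phi$, which is symmetric in $a$ and $c$ since the mixed partials of the smooth function $\Phi$ commute; the conjugate identity follows the same way. Equivalently, one notes $\varphi=\imath\,\partial\bar\partial\Phi$ (up to the same constant) on the chart, whence $d\varphi=(\partial+\bar\partial)(\imath\,\partial\bar\partial\Phi)=0$ because $\partial^2=\bar\partial^2=0$ and $\partial\bar\partial=-\bar\partial\partial$. Since $\varphi$ is a globally defined form and $d\varphi=0$ has been checked on a neighbourhood of every point, $\varphi$ is closed on all of $M$; hence $(M,g)$ is K\"ahler, and the rank-three symmetric tensor $A=(XY\bar Z)\Phi$ built from the same potential is then automatically defined and compatible with this structure, which gives the statement for the triple $(M,g,A)$.

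I do not expect a genuine obstacle here; the difficulty is purely organizational. The two points needing care are: checking that the charts coming from the holomorphic affine structure are exactly the frame in which the potential hypothesis holds, so that no connection-coefficient corrections intervene; and keeping in mind that $\Phi$ need not glue to a global function --- only local K\"ahler potentials are available --- which is harmless because closedness of $\varphi$ is a local property. Compactness of $M$ is not used in this implication (it is invoked elsewhere in the paper), and one could alternatively phrase the argument through the criterion that a Hermitian metric is K\"ahler iff its Levi--Civita connection $\con{0}$ satisfies $\con{0}J=0$, which is supplied by the holomorphic affine structure together with torsion-freeness.
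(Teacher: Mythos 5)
Your proof is correct and follows essentially the same route as the paper's: both arguments reduce the K\"ahler condition to the symmetry of $\partial_c g_{a\bar b}=\partial_c\partial_a\partial_{\bar b}\Phi$ in $a$ and $c$, which is immediate from the local potential hypothesis of Section~\ref{S:Kahler}. Your $\varphi=\imath\,\partial\bar\partial\Phi$ formulation is simply a cleaner packaging of the same computation the paper carries out index-by-index (the paper additionally cites Schouten--VanDanzig to re-derive the potential from the Levi--Civita hypothesis, but that is another of the listed assumptions, so nothing essential differs).
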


 \medskip
 \begin{proof}
 By Schouten--VanDanzig if $M$ admits a Levi--Civita connection then the metric tensor necessarily have the following shape $g_{a {\bar b}}=\frac{\partial^2\Phi(z,\bar{z})}{\partial z^a \partial \bar{z}^b},$ where $\Phi(z,\bar{z})$ is a real valued function.

  \medskip

Let the corresponding (1,1) form be $\omega=\frac{\imath}{2}g_{a {\bar b}}(z,\overline{z})dz^a\wedge d\overline{z}^b$. This construction extends to a K\"ahlerian metric if K\"ahler's conditions are satisfied. The K\"ahler condition requires the local existence of a K\"ahler potential $\Phi$ in any coordinate system
   (unique modulo a pluriharmonic function) such that:
   
    \begin{equation}
        g_{a {\bar b}}=\frac{\partial^2\Phi(z,\bar{z})}{\partial z^a \partial \bar{z}^b}.
    \end{equation}

  \medskip

  The K\"ahler condition also requires that the 2-form 
  \begin{equation}\label{E:1}\omega=g_{a {\bar b}}(z,\overline{z})dz^ad\overline{z}^b\end{equation} is exact.
 
   \smallskip 
   
 In other words,  
 \begin{equation}\label{E:2}
   \forall a,b,c: \quad \frac{\partial g_{b\overline{c}}}{\partial z^a}=\frac{\partial g_{a\overline{c}}}{\partial z^b},\quad \frac{\partial g_{a\overline{b}}}{\partial \overline{z}^c} =\frac{\partial g_{a\overline{c}}}{\partial \overline{z}^b}. \end{equation}

 \medskip

Assume $M$ is as above. Then, locally everywhere on the manifold, there exists a rank three  symmetric tensor:
$$\frac{\partial g_{b\overline{c}}}{\partial z^a}=\frac{\partial^3 \Phi}{\partial z^a\partial z^b\partial \bar{z}^{c}}$$

 Denote this object, for simplicity as: $\partial_a \partial_b\partial_{\overline{c}}\Phi=\Phi_{ab\bar{c}}.$

 \smallskip

By K\"ahler's conditions, the following relations are satisfied: 
 \begin{equation}\label{E:rk3}
 \Phi_{ab\bar{c}}=\Phi_{ba\bar{c}} \quad\text{and}\quad \Phi_{\bar{c}a\bar{b}}=\Phi_{\bar{b}a\bar{c}}
 \end{equation}

\medskip

 A K\"ahler metric satisfies the properties that: $g_{i\bar{j}}$ is symmetric and $g_{\bar{i}\bar{j}}=g_{ij}=0$. 

 Therefore, the following tensors vanish:  $\Phi_{\bar{c}ab}=0$ (resp. $\Phi_{c\bar{a}\bar{b}}=0)$.
  
  Assuming we ignore the cases where the metric vanishes by definition of $g_{ij}$, the tensors $\Phi_{ab\bar{c}}$ (resp. $\Phi_{\bar{a}b\bar{c}}$) can be considered as rank three symmetric tensors.

  \smallskip 

To conclude, one can easily see that if the metric is not K\"ahler then not all of the required properties can be satisfied. 
\end{proof}

\subsubsection{}
K\"ahler metrics are naturally present on bounded domains (this is a classic result by Kobayashi on bounded domains \cite{Ko}).
This follows from the existence of a well defined Bergmann kernel on bounded strongly pseudo convex domains (also known as Stein manifolds). However, there exist also manifolds admitting a Bergmann metric which are not necessarily bounded manifolds.
 \footnote{On such manifolds, there exists a 
strongly pseudo convex exhaustion function. A real function $\psi: M \to \R$ with $\imath \partial \overline{\partial}\psi>0$ and such that $M_c=\{ z\in M \, |\, \psi(z)\leq c\}$.} 

 \medskip 
 
\subsubsection{Hermitian Frobenius manifold}
We give the following definition. 

\begin{dfn}
A Hermitian Frobenius manifold $M$ is a complex manifold with:

\begin{itemize}
  \item[]
  
    \item[---] 
holomorphic affine structure; 
  \item[]
 \item[---] a flat affine torsionless connection $\con{0}$; 
   \item[]
        \item[---] a K\"ahler metric $g$, compatible with the connection; 
          \item[]
            \item[---] a commutative, associative multiplication operation $\circ$ on the tangent sheaf, induced by the covariant derivative $\con{0}_X(Y):=X\circ Y$ for given flat vector fields. 
              \item[]
                \item[---] A rank three symmetric tensor $A$ such that: $A(X,Y,Z)=g(X\circ Y, Z)=g(X,Y\circ Z)$ for given flat vector fields.
\end{itemize}

\end{dfn}
\medskip

\subsubsection{Proof of Theorem \ref{T:equ}: (1) is equivalent to (2)}
\begin{lem}\label{L:1to2}
Let $(M,g)$ be a K\"ahler manifold. Then, the following statements are equivalent:
\begin{enumerate}

\item the Hermitian WDVV equation is satisfied;
    \item $M$ is a Hermitian Frobenius manifold;  
   \end{enumerate}
\end{lem}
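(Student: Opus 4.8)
The plan is to prove the equivalence by unwinding both sides into the same tensorial identity on a Kähler manifold, with the potential $\Phi$ playing the role of the generating function. Suppose first that $M$ is a Hermitian Frobenius manifold. By the axioms recalled in Section~\ref{S:Kahler} and Lemma~\ref{L:Kahler}, $M$ carries a flat torsionless connection $\con{0}$ with flat holomorphic coordinates, a compatible Kähler metric $g_{a\bar b}=\partial_a\partial_{\bar b}\Phi$, and a rank-three symmetric tensor which in flat coordinates is $\Phi_{ab\bar c}=\partial_a\partial_b\partial_{\bar c}\Phi$ (together with its conjugate $\Phi_{\bar a\bar b c}$), the vanishing of the remaining components $\Phi_{abc}=\Phi_{\bar a\bar b\bar c}=0$ being exactly the content of Equation~\eqref{E:rk3}. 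The multiplication on the tangent sheaf is then forced: $\partial_a\circ\partial_b=\sum_{\bar e f}\Phi_{ab\bar e}\,g^{\bar e f}\partial_f$, matching the structure-constant prescription $C_{ab}^{f}=g^{\bar e f}\Phi_{ab\bar e}$ of the Frobenius bundle. One checks commutativity (from $\Phi_{ab\bar c}=\Phi_{ba\bar c}$) and the invariance $\langle x\circ y,z\rangle=\langle x,y\circ z\rangle$ (tautological from the definition of $A$ via $g$) at once; the only nontrivial axiom is associativity, $(\partial_a\circ\partial_b)\circ\partial_{\bar c}=\partial_a\circ(\partial_b\circ\partial_{\bar c})$. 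Writing this out in coordinates and contracting with $g$ produces precisely Equation~\eqref{E:HWDVV}. Hence (2) $\Rightarrow$ (1).

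For the converse, I would run the same computation backwards. Starting from a Kähler manifold whose metric admits a local potential $\Phi$ with $g_{a\bar b}=\partial_a\partial_{\bar b}\Phi$, I use the Levi--Civita connection as $\con{0}$; the Kähler condition \eqref{E:2} is exactly what makes $\con{0}$ flat and torsionless with the $\Phi_{ab\bar c}$ symmetric in the first two slots, so the flat-coordinate picture of Section~\ref{S:Kahler} applies. Define $\circ$ on $T_M$ by the same formula $\partial_a\circ\partial_b=\sum g^{\bar e f}\Phi_{ab\bar e}\partial_f$ and $A$ by $A(X,Y,Z)=g(X\circ Y,Z)$. Commutativity, unitality (the unit section coming from $\con{0}e=0$ in the affine structure), and the bilinear-form compatibility are immediate; associativity is then \emph{equivalent} to \eqref{E:HWDVV}, which holds by hypothesis. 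This equips $M$ with all the data in the definition of a Hermitian Frobenius manifold, giving (1) $\Rightarrow$ (2).

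The organizing identity throughout is the reformulation of associativity of $\circ$ as a quadratic relation among the third derivatives of $\Phi$; once that translation is set up, both implications are essentially a single calculation read in two directions. The care needed is bookkeeping on indices: on a Kähler manifold the only surviving components of the cubic tensor are of mixed type $\Phi_{ab\bar c}$ and $\Phi_{\bar a\bar b c}$, and one must be consistent about which index the metric $g^{\bar e f}$ is contracting, so that the left- and right-hand sides of \eqref{E:HWDVV} correspond to the two bracketings $(\partial_a\circ\partial_b)\circ\partial_{\bar c}\circ^{?}\partial_{\bar d}$. The main obstacle I anticipate is precisely verifying that the naive "complexification" of the real WDVV argument is consistent --- i.e. that the mixed-type restrictions forced by Lemma~\ref{L:Kahler} do not make the putative algebra structure degenerate or the associativity relation vacuous --- and that the pencil $\con{\lambda}=\con{0}+\lambda(X\circ Y)$ remains flat in the Hermitian setting exactly when \eqref{E:HWDVV} holds. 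I would handle this by carrying out the curvature computation for $\con{\lambda}$ explicitly in flat holomorphic coordinates, where the flatness of $\con{0}$ kills all but the $O(\lambda^2)$ term, whose vanishing is \eqref{E:HWDVV} verbatim.
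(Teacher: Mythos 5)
Your proposal is correct and follows essentially the same route as the paper: both reduce the equivalence to the single organizing identity that associativity of the multiplication $\partial_a\circ\partial_b=\sum g^{\bar e f}\Phi_{ab\bar e}\,\partial_f$ is a quadratic relation among the third derivatives of the K\"ahler potential, which is the hermitian WDVV equation read in flat coordinates. The only cosmetic difference is that the paper packages this computation as the vanishing of the curvature tensor $R_{a\bar b c\bar d}$ (via $R_{X,Y}(Z)=X\circ(Y\circ Z)-Y\circ(X\circ Z)$), whereas you phrase it as the flatness of the pencil $\con{\lambda}$ and a direct expansion of the associator --- the underlying calculation is the same.
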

\begin{proof}
The proof relies on the the previous subsections and follows from the deep relation between the associativity equations in the Frobenius algebra and the flatness of the curvature tensor.

The curvature tensor $R_{X,Y}(Z)$ obeys to the following:  

\begin{equation}\label{E:C}
 R_{X,Y}(Z)= X \circ (Y \circ Z)-Y \circ (X \circ Z)
\end{equation}

given vector fields $X,Y,Z\in T_M^{1,0}$ (or respectively $X,Y,Z\in T_M^{0,1}$).

Therefore, $R_{X,Y}(Z)=0$ if and only if the associator $\bA=0$ i.e. $X \circ (Y \circ Z)=Y \circ (X \circ Z).$

\medskip 

By commutativity of the algebra we get: $X \circ (Y \circ Z)=Y \circ (Z \circ X)$.

Iterating Equation \ref{E:C} gives:  $Y \circ (Z \circ X)=Z \circ(Y \circ X)$.

Therefore, $X \circ (Y \circ Z)= Z \circ(Y \circ X)$. 

By commutativity:
\begin{equation}\label{E:asso}
X \circ (Y \circ Z)=(X \circ Y)\circ Z.    
\end{equation} 

\smallskip 
 
In local coordinates, introducing $X=\partial a, Y=\partial_b, Z=\partial_c$ into Equation  \ref{E:asso}  one has:
\begin{equation}\label{E:Ass}
\nabla_{\partial a}(\, \partial_b \, \circ\,  \partial_c )= \nabla_{\partial b}(\, \partial_a\, \circ \, \partial_c).
\end{equation}

Now, in local coordinates, the curvature tensor is defined as follows. 
\begin{equation}\label{E:R}
 R_{a \bar{b} c \bar{d}}=\frac{\partial^2 g_{a\bar{b}}}{\partial z^c\partial \bar{z}^d}-g^{e\bar{\gamma}} \frac{\partial g_{a\bar{\gamma}}}{\partial z^c}\frac{\partial g_{e\bar{b}}}{\partial \bar{z}^d}.\end{equation}

If the curvature tensor is 0 then $R_{a \bar{b} c \bar{d}}=0$ implies that
 
 $$\frac{\partial^2 g_{a\bar{b}}}{\partial z^c\partial \bar{z}^d}=g^{e\bar{\gamma}} \frac{\partial g_{a\bar{\gamma}}}{\partial z^c}\frac{\partial g_{e\bar{b}} }{\partial \bar{z}^d}.$$
From computations, we obtain the hermitian WDVV equation such as as defined in Equation  \ref{E:WDVV}. 

The hermitian WDVV equation is satisfied if and only if at each point on $M$, the tangent space has the structure of Frobenius algebra.

\end{proof}

\medskip 

\subsubsection{Hermitian Frobenius bundles}
\label{S:HFB}
For the complex setting, a reformulation of the construction of Frobenius bundles is presented.

\smallskip 
\begin{dfn} Assume $M$ is an $n$-dimensional hermitian manifold over $\C$.
The manifold $M$ comes equipped with a Frobenius bundle $\sE$ if the following are all satisfied:

\smallskip 

\begin{enumerate}
 \item[---] $\sE$ is a Frobenius algebra bundle, where each fiber is a complex $n$-dimensional Frobenius algebra over $\C$.

   \item[]
   
    \item[---] $\sE$ is a flat bundle with structure group $Aff(n)$ equipped with an affine flat torsionless connection $\nabla_0$. 
   
     \item[]
     \item[---] For every $x\in M$ there exists on the fiber $E_x$ a positively defined K\"ahler form.

   \item[]

    \item[---] Everywhere locally, there exists a rank three symmetric tensor $C_{ki\bar{j}}:=\partial_{k} g_{i\bar{j}}$, where $g_{i\bar{j}}$ is the K\"ahler metric. Then, $A_{ki\bar{j}}$ is equal to $g(s_i\circ s_{\bar{j}},s_k)=g(s_i,  s_{\bar{j}} \circ s_k)$, where $s_i \in \Gamma(TM)$ 
are sections of the tangent bundle. 
\end{enumerate}
\end{dfn}

This construction allows us to prove the second part of Theorem \ref{T:equ}.

\medskip 

\begin{lem}\label{L:2-3C}
Assume $M$ is a compact complex manifold. The following are equivalent: 
\begin{enumerate}
    \item $M$ is a Hermitian Frobenius manifold. 
    \item $M$ is equipped with a Frobenius bundle. 
\end{enumerate} 
\end{lem}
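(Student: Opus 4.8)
\textbf{Proof plan for Lemma \ref{L:2-3C}.}

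The plan is to mirror the structure of the real case (Lemma \ref{L:2-3R}), keeping careful track of the extra data imposed by the complex/Hermitian structure. The two implications are handled separately.

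\emph{Direction (1)$\Rightarrow$(2).} Suppose $M$ is a Hermitian Frobenius manifold. By definition it carries a holomorphic affine structure, so the transition functions of $TM$ lie in $\mathrm{Aff}(n)$ and the tangent bundle is flat with structure group $\mathrm{Aff}(n)$; the associated flat torsionless connection is $\con{0}$. The K\"ahler metric $g$ compatible with $\con{0}$ supplies, fibrewise, a positive K\"ahler form on each $E_x$, and the rank-three symmetric tensor $A$ with $A(X,Y,Z)=g(X\circ Y,Z)=g(X,Y\circ Z)$ gives, in local flat coordinates, $A_{ki\bar j}=\partial_k g_{i\bar j}=\Phi_{ki\bar j}$ by Lemma \ref{L:Kahler}. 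The multiplication $\con{0}_X(Y):=X\circ Y$ on the tangent sheaf is commutative, and associativity of $\circ$ is exactly the flatness of the pencil $\con{\lambda}=\con{0}+\lambda(X\circ Y)$, so each fibre $E_x$ becomes an $n$-dimensional Frobenius algebra over $\C$. Assembling these pieces verifies every clause in the definition of a (Hermitian) Frobenius bundle from Section \ref{S:HFB}.

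\emph{Direction (2)$\Rightarrow$(1).} Conversely, suppose $\sE$ is a Frobenius bundle over $M$. The structure group $\mathrm{Aff}(n)$ together with the flat torsionless connection $\con{0}$ recovers the holomorphic affine structure and a compatible flat connection. The fibrewise K\"ahler form gives a K\"ahler metric $g$ on $TM$; the rank-three symmetric tensor $C_{ki\bar j}=\partial_k g_{i\bar j}$ is (locally) the third derivative of a potential by the K\"ahler condition, reproducing $\Phi$ and the relations \eqref{E:rk3}. The Frobenius-algebra structure on each fibre, transported to the tangent sheaf via $\con{0}_X(Y)=X\circ Y$ with structure constants $C_{ij}^k=g^{ek}C_{eij}$, is commutative, associative and compatible with $g$ in the sense $g(X\circ Y,Z)=g(X,Y\circ Z)$. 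Hence $M$ satisfies all the axioms of a Hermitian Frobenius manifold.

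\emph{Main obstacle.} The delicate point is the translation between ``associativity of the fibrewise algebra at every point'' and ``flatness of the pencil $\con{\lambda}$ of connections on the tangent sheaf'', since in the Hermitian setting the relevant curvature is the mixed tensor $R_{a\bar b c\bar d}$ of \eqref{E:R} and one must check that the vanishing of the associator $\bA$ (in the $(1,0)$ and $(0,1)$ pieces separately) is equivalent to $R_{X,Y}(Z)=0$ via Equation \eqref{E:C}; this is precisely where Lemma \ref{L:1to2} and the identification $A_{ki\bar j}=\Phi_{ki\bar j}$ enter, and where one must be careful that the holomorphic splitting $T_M=T_M^{1,0}\oplus T_M^{0,1}$ is respected by $\circ$. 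The remaining verifications — that $\mathrm{Aff}(n)$-flatness matches the holomorphic affine structure, and that the fibrewise bilinear form assembles into a global K\"ahler metric — are routine given the recollections in Section \ref{S:1.2} and Lemma \ref{L:Kahler}.
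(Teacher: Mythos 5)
Your proposal follows essentially the same route as the paper: the paper's own proof of Lemma \ref{L:2-3C} consists of a single remark that the argument ``mimics essentially the same steps as for the real case (Lem. \ref{L:2-3R})'' with the modifications coming from Sections \ref{S:Kahler}--\ref{S:Kahler2}, which is exactly the strategy you carry out. Your write-up is in fact more detailed than the paper's, and your identification of the delicate point (matching associativity of the fibrewise algebras with the vanishing of the mixed curvature $R_{a\bar b c\bar d}$ via Lemma \ref{L:1to2}) is consistent with what the paper relies on implicitly.
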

\begin{proof}
The proof mimics essentially the same steps as for the real case (see Lem. \ref{L:2-3R}). However, there is a  difference relying on the constructions described in Section \ref{S:HermitianFman} (precisely, sections \ref{S:Kahler}--\ref{S:Kahler2}).  
\end{proof}

\medskip 

\section{K\"ahler--Frobenius manifolds}\label{S:KFmfd}
 {\it Are there K\"ahler compact manifolds which satisfy the axioms of a Frobenius manifolds?}  We give a positive answer to this question and give a classification of such manifolds.

\subsection{}
We show that a complex K\"ahler manifold with vanishing curvature admits the structure of a Frobenius manifold.  

\medskip 

\begin{Theorem}\label{T:Main} 
Let $M$ be a compact complex K\"ahler manifold with vanishing curvature. Then, $M$ is a hermitian Frobenius manifold.
\end{Theorem}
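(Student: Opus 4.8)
The strategy is to verify, one by one, the items in the definition of a Hermitian Frobenius manifold (Section~\ref{S:HermitianFman}) for a compact K\"ahler manifold $M$ with vanishing curvature, and then invoke Theorem~\ref{T:equ} to package the data as a Frobenius bundle. First I would observe that a compact K\"ahler manifold with vanishing Riemannian curvature is, by the Bieberbach structure theory, finitely covered by a flat complex torus; in particular the Levi--Civita connection $\con{0}$ is flat and torsionless, and its holonomy is finite, so $M$ carries a holomorphic affine structure with transition functions in $Aff(n) = GL(n,\C) \rtimes \C^n$. This already supplies the first two bullets of the definition: the holomorphic affine structure and the flat affine torsionless connection $\con{0}$. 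The third bullet --- a K\"ahler metric compatible with $\con{0}$ --- is immediate, since the given K\"ahler metric $g$ is parallel for its own Levi--Civita connection, which is precisely $\con{0}$.

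Next I would produce the local K\"ahler potential and the rank-three symmetric tensor. Because $g$ is K\"ahler, Lemma~\ref{L:Kahler} (or rather the Schouten--van~Dantzig argument recalled in its proof) gives, in any flat holomorphic coordinate chart $(z^a)$, a real local potential $\Phi$ with $g_{a\bar b} = \partial_a\partial_{\bar b}\Phi$, and one sets $A_{ab\bar c} = \Phi_{ab\bar c} = \partial_a\partial_b\partial_{\bar c}\Phi = \partial_a g_{b\bar c}$, together with its conjugate $A_{\bar a\bar b c}$. The K\"ahler identities \eqref{E:rk3} show these are symmetric in the appropriate indices, so $A$ is the required rank-three symmetric tensor, and defining $X \circ Y := \con{0}_X Y$ on flat vector fields (equivalently $\partial_a \circ \partial_b = \sum_c A_{ab}^c \partial_c$ with $A_{ab}^c = g^{c\bar e}A_{ab\bar e}$) gives a commutative, unital multiplication on the tangent sheaf satisfying $A(X,Y,Z) = g(X\circ Y, Z) = g(X, Y\circ Z)$. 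The remaining --- and genuinely substantive --- point is \emph{associativity} of $\circ$, equivalently the Hermitian WDVV equation \eqref{E:HWDVV}. Here I would use the computation already indicated in the proof of Lemma~\ref{L:1to2}: the curvature tensor $R_{a\bar b c\bar d}$ of the K\"ahler metric, written in the form \eqref{E:R}, is precisely the obstruction $X\circ(Y\circ Z) - Y\circ(X\circ Z)$; since by hypothesis the curvature vanishes, we get $R_{a\bar b c\bar d} = 0$, hence $\partial_c\partial_{\bar d}g_{a\bar b} = g^{e\bar\gamma}(\partial_c g_{a\bar\gamma})(\partial_{\bar d} g_{e\bar b})$, which is exactly the Hermitian WDVV relation after raising indices with $g$. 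By Lemma~\ref{L:1to2} the WDVV equation holding on a K\"ahler manifold is equivalent to each tangent space being a Frobenius algebra, so $\circ$ is associative.

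With all the bullets of the definition verified, $M$ is a Hermitian Frobenius manifold; and by Theorem~\ref{T:equ} (specifically Lemma~\ref{L:2-3C}) it is equipped with a Frobenius bundle, completing the proof. I expect the main obstacle to be the associativity/WDVV step: one must make sure the flatness hypothesis is being used in the right (Hermitian) sense, i.e. that vanishing of the full Riemannian curvature really forces vanishing of the relevant holomorphic curvature components $R_{a\bar b c\bar d}$, and then carry out the index bookkeeping connecting \eqref{E:R}, \eqref{E:HWDVV} and the associator $\bA$ cleanly. A secondary point requiring care is compactness: it is used to guarantee that ``flat compact K\"ahler'' lands in the Bieberbach world (so that the affine structure is \emph{complete} and the holonomy finite, hence the structure group genuinely reduces to $Aff(n)$), and this is also what makes the later classification (Theorem~\ref{T:Class}) possible; the local Frobenius structure itself, however, only needs the local vanishing of curvature and the K\"ahler condition.
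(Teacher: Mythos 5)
Your proposal is correct and follows essentially the same route as the paper: verify the bullets of the definition of a Hermitian Frobenius manifold, with the decisive step being that vanishing curvature forces the associator to vanish via the identity $R_{X,Y}(Z)=X\circ(Y\circ Z)-Y\circ(X\circ Z)$ and hence yields the Hermitian WDVV equation \eqref{E:HWDVV}, after which Lemma~\ref{L:1to2} and Lemma~\ref{L:2-3C} close the argument. The one genuine divergence is how you obtain the holomorphic affine structure: the paper argues cohomologically, asserting that the obstruction $1$-cocycle in $H^1(M,\Omega^1\otimes\Omega^1\otimes\cO)$ is cohomologous to zero (the Christoffel transformation law exhibiting the coboundary), whereas you invoke Bieberbach structure theory to get a finite cover by a flat complex torus and read off the affine structure, completeness, and the reduction of the structure group to $Aff(n)$ from there. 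Your route makes the role of compactness explicit and arguably gives more than the paper's step (completeness and finite holonomy for free, which feed directly into the classification in Theorem~\ref{T:Class}); the paper's cohomological criterion is more local in spirit and does not need the global covering. The remainder --- the Schouten--van Dantzig potential, the tensor $\Phi_{ab\bar c}=\partial_a g_{b\bar c}$ with symmetries \eqref{E:rk3}, the multiplication $\partial_a\circ\partial_b=\sum_c\Gamma^c_{ab}\partial_c$, and the splitting into $(T^{1,0}_M,\circ)\oplus(T^{0,1}_M,\circ)$ --- matches the paper's steps essentially verbatim.
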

In particular for simplicity a K\"ahler manifold carrying the structure of a  hermitian Frobenius manifold is referred as K\"ahler--Frobenius manifold.

\,

\begin{proof}~

We first show that if $M$ is a compact complex K\"ahler manifold with vanishing curvature then $M$ admits a holomorphic affine connection. 

\, 

1) The manifold $M$ admits a holomorphic affine connection 
if and only if  the 1-cocycle defining an element of 
$H^1(M,\Omega^1\otimes\Omega^1\otimes\cO)$ is zero.

\,

This  1-cocycle is cohomologous to zero 
if and only 
if there exists a 0-coboundary 
in $H^0(U,\Omega^1\otimes\Omega^1\otimes\cO)$ 
such that 
$C_{VU}=b_V-b_U$, where 

\[b_U=\sum \Gamma_{Ujk}^i d u^j\otimes d u^k\otimes \frac{\partial}{\partial u^i} \]
 then $C_{VU}=b_V-b_U$ is the transformation law for Christoffel symbols 
 \[\Gamma_{Ujk}^i-\sum \frac{\partial u^i}{\partial v^a}\Gamma_{Vbc}^a \frac{\partial v^b}{\partial u^k}\frac{\partial v^c}{\partial u^k}=
 \sum\frac{\partial u^i}{\partial v^a}\frac{\partial^2 v^a}{\partial u^j\partial u^k}.\]

\,

Therefore, this proves that if $M$ a K\"ahler manifold with vanishing curvature then $M$ admits a holomorphic affine connection.

\,

2) If $M$ is equipped with a holomorphic affine structure then there exists a flat, torsionless affine connection on $M$. Let $\con{0}$ be this connection.
Let $\sA$ be the (affine) space of connections on $M$. For all $x \in M$ and any connection $\tilde{\nabla}\in \sA$, 
the difference $$\tilde{\nabla}-\nabla$$ is given by a smooth bilinear bundle homeomorphism 
$$\tilde{\Gamma}:TM\times TM \to TM,$$ where  $\tilde{\Gamma}$ depends smoothly on $x\in M$. 

\, 

Reciprocally, given a connection $\nabla \in \sA$ and $\tilde{\Gamma}$ 
there exists an affine connection in $\tilde{\nabla}\in \sA$ corresponding to $\nabla + \tilde{\Gamma}$.
Since $\sA$ is an affine space, it is enough to vary $\con{0}$ along lines as follows: 
$$\con{\lambda}=\con{0}+\lambda \eta$$ where $\eta$ is a given 1-form and $\lambda$ is a real parameter.

\,

 \,
 
3) This construction allows to define a multiplication operation on the tangent sheaf. We get: 
 $$X\circ Y:=i_X(\tilde{\Gamma})(Y),$$ where 
 $$i_X(df\otimes Q)=Xf\cdot Q, $$ for objects
 \begin{itemize}
     \item[---] $f\in O_M$,

     \item[---] $df\in \Omega^1_M$;

      \item[---] $Q\in End(TM)$.
 \end{itemize} 
 In local coordinates, one obtains: $$\partial_a \circ \partial_b =\sum_c\Gamma_{ab}^{c}\partial_c.$$
 
 \,
 
4) The complex tangent space $TM$ decomposes into a holomorphic and anti-holomorphic part:
$$TM=T^{1,0}_M\oplus T^{0,1}_M.$$
By construction of a K\"ahler metric \cite[Chap. 8]{BY} the following Christoffel symbols vanish:  
 $\Gamma_{a\bar{b}}^c=0$,  $\Gamma_{\bar{a}\bar{b}}^c=0$ and $\Gamma_{ab}^{\bar{c}}=0$. 
 Therefore, the non-null Christoffel symbols are: $\Gamma_{ab}^c$ and  $\Gamma_{\bar{a}\bar{b}}^{\bar{c}}$.
 
\,
 
5) We prove now that there exists a pre-Frobenius algebra structure on $(T^{1,0}_M,\circ)$ (resp. on $(T^{0,1}_M,\circ)$), where $\circ$ is defined above.
In local coordinates, this is illustrates as:  
  $$\partial_a \circ \partial_b =\sum_c\Gamma_{ab}^{c}\partial_c$$ where $\partial_a,\partial_b, \partial_c \in T^{1,0}_M$ 
  respectively:  $$\partial_{\bar{a}} \circ \partial_{\bar{b}} =\sum_{\overline{c}}\Gamma_{\bar{a}\bar{b}}^{\bar{c}}\partial_{\bar{c}},$$
 where $\partial_{\bar{a}},\partial_{\bar{b}}, \partial_{\bar{c}} \in T^{0,1}_M$. 
  Since there exists a bilinear symmetric form given from the K\"ahler metric on $E_x$, 
 these algebras inherit naturally a bilinear symmetric form.

 \,
 
By definition of a K\"ahler metric  
 $g_{ab}=0$ and $g_{\bar{a}\bar{b}}=0$. 
 Therefore, on $(T^{1,0}_M,\circ)$ the symmetric bilinear form satisfies:
  
$$g(\partial_a\circ \partial_b,\partial_c)=g(\partial_a, \partial_b\circ\partial_c)= \partial_a \partial_{b}\partial_c \Phi=0.$$
 
Similarly,  on  ($T^{0,1}_M,\circ$) we get  $g(\partial_{\bar{a}}\circ \partial_{\bar{b}},\partial_{\bar{c}})=g(\partial_{\bar{a}}, \partial_{\bar{b}}\circ\partial_{\bar{c}})= \partial_{\bar{a}} \partial_{\bar{b}}\partial_{\bar{c}} \Phi=0$.

 Therefore,  $(T^{1,0}_M,\circ)$ (resp. $T^{0,1}_M$) are pre-Frobenius algebras where 
 $\langle X\circ Y, Z \rangle =  \langle X, Y \circ Z\rangle=0, $ for $X,Y,Z \in T^{1,0}_M$ (resp. $T^{0,1}_M$).

\, 

 The direct sum of Frobenius algebras forms a Frobenius algebra. So, $(T_M,\circ)$ carries the structure of a Frobenius algebra.

\,

6) Everywhere locally there exists a rank three tensor on $M$:
\[\forall a,b,c,\, \quad \Phi_{ca\bar{b}}=\partial_c\partial_a\partial_{\bar{b}}\Phi \quad \text{or}\quad 
\Phi_{\bar{c}a\bar{b}}=\partial_{\bar{c}}\partial_a\partial_{\bar{b}}\Phi.\]

\,

By K\"ahler's condition: \[\forall a,b,c,\quad \Phi_{ca\bar{b}}=\Phi_{ac\bar{b}}\,\quad  \text{and}\,\quad\Phi_{\bar{c}a\bar{b}}=\Phi_{\bar{b}a\bar{c}}.\]
By symmetry of the metric tensor: 

$$\forall a,b,c, \quad \Phi_{ca\bar{b}}=\Phi_{ac\bar{b}}=\Phi_{a\bar{b}c}=\Phi_{c\bar{b}a}$$ and 
$$\Phi_{\bar{c}a\bar{b}}=\Phi_{\bar{b}a\bar{c}}=\Phi_{\bar{b}\bar{c}a}=\Phi_{\bar{c}\bar{b}a}.$$

\smallskip 

The K\"ahler metric tensors give $g_{ab}=g_{\bar{a}\bar{b}}=0$.
So,  $\Phi_{cab}$ and $\Phi_{c\bar{a}\bar{b}}$ vanish, since the metric tensors vanish. Therefore, on the directions where the K\"ahler metric tensor is not (by definition) zero, the rank thee symmetric tensor
given by $\Phi_{ca\bar{b}}$ and $\Phi_{\bar{c}a\bar{b}}$ are symmetric. 
 
\,

7)  We prove now that we have all ingredients to satisfy the hermitian WDVV equation. We use the Einstein summation convention in the following paragraphs for simplicity. The curvature tensor is given by

 \begin{equation}\label{E:R}
 R_{a \bar{b} c \bar{d}}=\frac{\partial^2 g_{a\bar{b}}}{\partial z^c\partial \bar{z}^d}-g^{e\bar{\gamma}} \frac{\partial g_{a\bar{\gamma}}}{\partial z^c}\frac{\partial g_{e\bar{b}}}{\partial \bar{z}^d}.\end{equation}

 By hypothesis, the curvature tensor vanishes i.e. $R_{a \bar{b} c \bar{d}}=0$. Therefore, Equation~\ref{E:R} is equivalent to
\begin{equation}\label{E:R1}
\frac{\partial^2 g_{a\bar{b}}}{\partial z^c\partial \bar{z}^d}= \frac{\partial g_{a\bar{f}}}{\partial z^c} g^{e\bar{f}} \frac{\partial g_{e\bar{b}}}{\partial \bar{z}^d}.\end{equation}

\smallskip 

Since $M$ is a K\"ahler manifold,  Equation\ref{E:R1} can be rewritten as  

\begin{equation}\label{E:R2}
\partial_{a}\partial_{\bar{d}}\Phi_{c\bar{b}}= \Phi_{ca\bar{f}}g^{e\bar{f}} \Phi_{\bar{d}e\bar{b}}.
\end{equation}

 By K\"ahler's condition $d\omega=0$ and by construction $d^2\omega=0$.

Therefore,
\begin{equation}\label{E:HWDVV}
\frac{\partial^2 g_{a\bar{b}}}{\partial z^c\partial \bar{z}^d}= \quad \frac{\partial g_{a\bar{f}}}{\partial z^c} g^{e\bar{f}} \frac{\partial g_{e\bar{b}}}{\partial \bar{z}^d}= \quad  \frac{\partial g_{a\bar{f}}}{\partial \bar{z}^d} g^{e\bar{f}} \frac{\partial g_{e\bar{b}}}{\partial {z}^c}\quad =  \quad  \frac{\partial^2 g_{a\bar{b}}}{\partial\bar{z}^d \partial z^c}. \end{equation}

\smallskip 

 The equation \ref{E:HWDVV} corresponds to 

\[\partial_cg_{\bar{f}a} g^{e\bar{f}} \partial_{\bar{d}}g_{\bar{b}e} = \partial_cg_{\bar{b}e}g^{e\bar{f}}  \partial_{\bar{f}}g_{\bar{d}a},\]

\smallskip 

which is a hermitian WDVV equation. 

\smallskip  

\smallskip 
 Therefore, $M$ is a hermitian (K\"ahler) Frobenius manifold. 
\end{proof}
\begin{prop}
Let $M$ be a $\C$-Frobenius manifold. Then $M$ admits also the structure of a flat smooth complex projective variety.
\end{prop}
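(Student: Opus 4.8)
The plan is to identify a $\C$-Frobenius manifold $M$ with one of the explicit geometric models furnished by the classification (Theorem~\ref{T:Class}) and to observe that every item on that list is, or carries the structure of, a flat smooth complex projective variety. First I would invoke Theorem~\ref{T:Main} in combination with the definition of a Hermitian Frobenius manifold: a $\C$-Frobenius manifold is a compact K\"ahler manifold equipped with a holomorphic affine structure and a flat torsionless connection $\con{0}$ compatible with the K\"ahler metric; in particular its curvature vanishes. Thus $M$ is a compact flat K\"ahler manifold. The key structural input is the Bieberbach--type description: a compact flat K\"ahler manifold is a quotient $\C^n/\Lambda$ where $\Lambda$ is a complex Bieberbach group, equivalently a finite free quotient $T/G$ of a complex torus $T=\C^n/\Lambda_0$ by a finite group $G$ acting freely and containing no translations (this is exactly the content of the classification cases (1)--(5) in the introduction).

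Next I would establish projectivity. For a complex torus $T=\C^n/\Lambda_0$ appearing here, the relevant point is that the flat K\"ahler (in fact Hermitian--Einstein, by the pencil-of-connections discussion promised in Section~\ref{S:ClassN}) structure provides a positive $(1,1)$-form whose class, after possibly a small perturbation of the lattice within its isomonodromic family, is rational; hence $T$ is an abelian variety and in particular projective. More intrinsically: the theta functions attached to $M$ in Theorem~\ref{T:Main}, sections of $L^{\otimes s}$ for the polarizing line bundle $L$, give a projective embedding of $T$ for $s\geq 3$. For a quotient $X=T/G$ with $G$ finite acting freely, one descends a $G$-linearized ample line bundle: replacing $L$ by $\bigotimes_{g\in G} g^*L$ one obtains a $G$-invariant ample line bundle on $T$, which by freeness of the action descends to an ample line bundle on $X$; by Serre's criterion $X$ is projective, and it is smooth since the action is free. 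Flatness of $X$ is inherited from $\C^n$ via the locally isometric covering $\C^n\to T\to X$, so the affine/flat structure on $M$ is precisely this one.

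The steps, in order, are: (i) from the hypotheses unwind that $M$ is compact, flat, K\"ahler with holomorphic affine structure; (ii) apply the Bieberbach classification to write $M\cong T/G$ with $T$ a complex torus and $G$ finite, free, translation-free (citing Theorem~\ref{T:Class}); (iii) use the theta-function/line-bundle output of Theorem~\ref{T:Main} to realize $T$ as an abelian variety; (iv) descend a $G$-invariant ample line bundle along the free quotient to conclude $X=T/G$ is smooth projective by Serre's criterion; (v) note the flat affine structure descends along the same covering, so $M$ is a \emph{flat} smooth complex projective variety.

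The main obstacle is step~(iii): a priori a compact flat K\"ahler manifold need only be a complex torus quotient, and complex tori are generically \emph{not} projective. What rescues the argument is that the Frobenius structure is more than bare K\"ahler data --- the isomonodromic pencil $\con{\lambda}=\con{0}+\lambda\,(X\circ Y)$ together with the Hermitian--Einstein property pins down a flat Hermitian metric whose fundamental form is, up to scale, the one attached to the polarizing line bundle $L$ with $H^0(T,L^{\otimes s})$ the level-$s$ theta functions; the existence of this $L$ (equivalently, the compatibility of the flat connection with an integral polarization, which is part of what it means for $M$ to support theta functions as in the introduction) forces $T$ to be abelian. Making this precise --- i.e. extracting an honest polarization from the Frobenius/theta data rather than merely a K\"ahler class --- is the delicate point; once it is granted, steps (iv)--(v) are standard descent and covering-space arguments.
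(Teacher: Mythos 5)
Your overall strategy differs from the paper's: the paper disposes of this proposition in two lines by quoting Johnson's theorem (\cite{John}, Theorem 4.2), which says precisely that a smooth compact flat Riemannian manifold admitting a K\"ahler structure also admits the structure of a flat smooth complex projective variety. You instead try to prove projectivity directly, by writing $M\cong T/G$ and descending an ample line bundle from $T$. The descent part (your steps (iv)--(v)) is fine, but the argument founders at the step you yourself flag as delicate.

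The gap at step (iii) is genuine and, as written, fatal. A generic complex torus $\C^n/\Lambda$ with $n\geq 2$ is a compact flat K\"ahler manifold --- hence a $\C$-Frobenius manifold by Theorem \ref{T:Main} --- yet it carries no polarization and is \emph{not} projective with its given complex structure. Nothing in the Frobenius data rescues this: the pencil $\con{\lambda}$ and the Hermitian--Einstein property only produce a flat K\"ahler class, and the theta-function discussion in the introduction \emph{presupposes} a principally polarized torus rather than producing a polarization; there is no mechanism forcing the K\"ahler class to be rational. Your aside about ``a small perturbation of the lattice'' is in fact the whole content of the theorem: one must \emph{deform the complex structure} so that the covering torus becomes an abelian variety, and the conclusion then concerns this deformed structure, not the original one. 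That is exactly what Johnson's theorem packages (the proposition asserts only that $M$ \emph{admits} the structure of a flat smooth projective variety, not that its given complex structure is projective). If you want to avoid citing Johnson, you must either carry out this deformation argument explicitly or restrict to the polarizable case; as it stands your proof claims more than is true.
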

\begin{proof}
By construction if $M$ is a $\C$-Frobenius manifold then it is smooth compact flat Riemannian manifold equipped with a K\"ahler structure. 
  By Johnsson \cite[Theorem 4.2]{John}, we have the following. Let $M$ be a smooth compact flat Riemannian manifold. If $M$ is a flat K\"ahler manifold then $M$ admits also the structure of a flat smooth complex projective variety. Therefore, the conclusion follows. 
\end{proof}
\begin{lem}
  If $M$ is a $\C$-Frobenius manifold, then  its fundamental group is an affine crystallographic group. 
\end{lem}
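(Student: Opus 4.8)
The plan is to realise $M$ as a compact complete flat affine manifold and then invoke the structural description of fundamental groups of such manifolds recalled in Section~\ref{S:1.2} (namely, that the fundamental group of a compact complete flat affine manifold is an affine crystallographic group).

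First I would unwind the definitions: by Theorem~\ref{T:Main} together with the definition of a $\C$-Frobenius manifold, $M$ is a compact complex K\"ahler manifold with vanishing curvature, equipped with a holomorphic affine structure and a flat torsionless connection $\con{0}$ compatible with the K\"ahler metric $g$. Since a torsionless metric connection is the Levi--Civita connection, $\con{0}$ is the Levi--Civita connection of $g$, so $(M,g)$ is a compact flat Riemannian manifold and $\con{0}$ underlies its flat affine structure. (One may also use the preceding proposition, which already exhibits $M$ as a flat smooth projective variety, but this extra information is not needed here.)

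Next I would invoke completeness: a compact Riemannian manifold is geodesically complete by Hopf--Rinow, hence $(M,\con{0})$ is a \emph{complete} flat affine manifold. By the Killing--Hopf theorem the universal cover of $M$ is isometric, hence affinely isomorphic, to $\R^{2n}$, and the deck group $\Gamma\cong\pi_1(M)$ acts on $\R^{2n}$ freely and properly discontinuously by affine isometries with compact quotient. Therefore $\Gamma$ is a cocompact discrete subgroup of the affine group, i.e. an affine crystallographic group in the sense of Section~\ref{S:1.2}; since the action is moreover free, $\Gamma$ is torsion-free, so it is in fact a Bieberbach group. Alternatively one simply quotes verbatim the statement recalled in Section~\ref{S:1.2} once the hypotheses ``compact'', ``complete'', ``flat'', ``affine'' have been checked.

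The one delicate point — the main obstacle — is precisely the passage from ``flat affine structure'' to ``\emph{complete} flat affine structure'': for a general flat affine connection, compactness does not force affine completeness (this is the content of the Markus conjecture). Here the difficulty evaporates because the flat connection supplied by the Hermitian Frobenius structure is the Levi--Civita connection of the K\"ahler metric, for which geodesic completeness on a compact manifold is automatic; I would therefore be careful to run the argument with this metric affine structure rather than with an arbitrary flat affine connection on $M$.
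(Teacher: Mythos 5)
Your proposal is correct and follows the same route as the paper: vanishing curvature gives a flat affine structure, and the fundamental group of a compact complete flat affine manifold is an affine crystallographic group. The one substantive difference is that you explicitly address completeness, whereas the paper's proof passes directly from ``$M$ is an affine manifold'' to ``$\pi_1(M)$ is affine crystallographic''; as you note, that implication is false for a general (possibly incomplete) compact flat affine structure --- it is precisely the content of the Markus conjecture --- and the statement recalled in Section~\ref{S:1.2} requires completeness. Your observation that the flat connection here is the Levi--Civita connection of the K\"ahler metric, so that Hopf--Rinow supplies completeness and the Bieberbach theory applies, closes a genuine gap in the paper's own argument and is the right way to run the proof.
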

   
\begin{proof}
    If $M$ is a compact $\C$-Frobenius manifold then it has vanishing curvature. Therefore, it has an affine structure. The existence of an affine structure implies that $M$ is an affine manifold. So, its fundamental group is an affine crystallographic group. 
\end{proof}

\begin{cor}
  There exists a finite number of $n$-dimensional $\C$-Frobenius manifolds, for $n>1$. 
\end{cor}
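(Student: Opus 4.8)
The plan is to reduce the statement to the classical finiteness theorem of Bieberbach, exploiting the fact that the entire Frobenius structure of a $\C$-Frobenius manifold is rigidly determined by its underlying flat affine structure. By the preceding lemma, the fundamental group $\pi_1(M)$ of an $n$-dimensional $\C$-Frobenius manifold $M$ is an affine crystallographic group; moreover, by Theorem \ref{T:Main} and the proposition above, $M$ is a compact flat K\"ahler Riemannian manifold of real dimension $2n$, so $\pi_1(M)$ is in fact torsion-free, i.e. a Bieberbach group $\Lambda$ of dimension $2n$ sitting in the short exact sequence $0\to V\to \Lambda\to P\to 1$ with $P\le GL(2n,\Z)$ finite. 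The first step is to record that the holomorphic affine structure, and with it the flat connection $\con{0}$, the compatible K\"ahler metric and the multiplication $\circ$ — all of which are manufactured from $\con{0}$ in the proof of Theorem \ref{T:Main} — are encoded by $\Lambda$; moreover compatibility with the complex structure constrains the point group $P$ to lie in a complex-linear subgroup of $GL(2n,\Z)$.

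The second step invokes Bieberbach rigidity (his first and second theorems): two compact complete flat manifolds are affinely diffeomorphic if and only if their fundamental groups are isomorphic as abstract groups. Consequently the affine-isomorphism class of a $\C$-Frobenius manifold is determined by the isomorphism class of $\Lambda$, and counting $n$-dimensional $\C$-Frobenius manifolds in the natural category of affine manifolds — whose morphisms are affine maps, and which is the category in which the Frobenius axioms are phrased — amounts to counting isomorphism classes of $2n$-dimensional Bieberbach groups.

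The third step is Bieberbach's third theorem: in each fixed dimension there are only finitely many isomorphism classes of crystallographic groups, a fortiori of torsion-free ones. Applying this in dimension $2n$ yields finitely many admissible $\Lambda$, and combining with the rigidity of the second step gives the asserted finiteness; the hypothesis $n>1$ merely excludes the trivial one-dimensional case, in which the only compact flat K\"ahler manifold is a single real torus. In practice one may organise the count in two layers: finitely many point groups $P$ up to conjugacy in $GL(2n,\Z)$ (Jordan--Zassenhaus), and, for each $P$, finitely many extension classes realising a torsion-free $\Lambda$, which also dovetails with the five families listed in Theorem \ref{T:Class}.

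The main obstacle is conceptual rather than computational: one must pin down the equivalence relation so that the count is finite and genuinely about $\C$-Frobenius manifolds. The delicate point to argue carefully is that the passage $M\mapsto \Lambda$ loses no discrete invariant of the Frobenius structure — that $\con{0}$, $g$ and $\circ$ are recovered up to affine isomorphism from $\Lambda$ alone — so that the finiteness is a statement about $\C$-Frobenius manifolds as affine manifolds, the complex-analytic moduli (for instance the period matrices of the tori $\C^n/\Lambda$) varying continuously \emph{within} each affine-isomorphism class rather than separating distinct classes. Establishing this rigidity of the Frobenius data with respect to the holonomy group is the heart of the write-up; once it is in place, the finiteness follows formally from Bieberbach's three theorems.
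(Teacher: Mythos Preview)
Your approach is essentially the paper's: the published proof is a single sentence, ``This follows from the computation of the number of affine crystallographic groups,'' invoking the preceding lemma exactly as you do. You have supplied considerably more structure --- explicitly naming Bieberbach's three theorems, passing to the torsion-free (Bieberbach) case in real dimension $2n$, and, most usefully, isolating the need to fix the equivalence relation as affine isomorphism and to check that the Frobenius data $(\con{0},g,\circ)$ contribute no further discrete invariants beyond $\Lambda$; the paper leaves all of this implicit, so your write-up is in fact more careful than the original on the one genuinely delicate point.
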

\begin{proof}
    This follows from the computation of the number of affine crystallographic groups. 
\end{proof}
\subsection{Chern's Conjecture}\label{S:Chern}
Let $M$ be a holomorphic compact complex manifold. Then, expressing Chern classes in terms of the curvature of a holomorphic affine connection leads to the following fact:

\begin{lem}\label{L:Chern}
  Let $M$ be a compact complex (pre-)Frobenius manifold. Then,  all Chern classes vanish $c_i(M)=0,\quad i>0.$
\end{lem}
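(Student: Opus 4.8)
The plan is to reduce the vanishing of Chern classes to the existence of a holomorphic (hence flat) affine connection on $M$, which is guaranteed for compact complex (pre-)Frobenius manifolds by the analysis in the proof of Theorem \ref{T:Main} (the $1$-cocycle in $H^1(M,\Omega^1\otimes\Omega^1\otimes\cO)$ obstructing a holomorphic affine connection vanishes, and even in the pre-Frobenius case the affine structure is part of the data). Once we have a holomorphic connection $\nabla$ on the tangent bundle $TM$, its curvature $\Theta$ is a $(2,0)$-form valued in $\mathrm{End}(TM)$, because $\nabla$ being holomorphic means $\nabla^{0,1}=\bar\partial$ and so the $(1,1)$- and $(0,2)$-components of the curvature vanish.

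Next I would invoke Chern--Weil theory: the Chern classes $c_i(M)\in H^{2i}(M,\C)$ are represented by the invariant polynomials $P_i(\Theta)$ evaluated on the curvature of \emph{any} connection compatible with the holomorphic structure. Taking the holomorphic affine connection $\nabla$, the form $P_i(\Theta)$ is a $(2i,0)$-form on $M$ (a product of $(2,0)$-forms). But on a compact complex manifold of complex dimension $n$, a $(2i,0)$-form with $2i>n$ is automatically zero for degree reasons, and more to the point, a closed $(2i,0)$-form represents a class in $H^{2i,0}_{\bar\partial}(M)$; its image in de Rham cohomology $H^{2i}(M,\C)$ is of pure Hodge type $(2i,0)$. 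Since $c_i(M)$ is a \emph{rational} (in particular real) class, it lies in $H^{2i}(M,\C)$ invariant under complex conjugation, so it must also have a representative of type $(0,2i)$. A class that is simultaneously of type $(2i,0)$ and $(0,2i)$ with $i>0$ must vanish (the two Hodge pieces $H^{2i,0}$ and $H^{0,2i}$ intersect only in $0$ inside $H^{2i}$ when $i>0$). Hence $c_i(M)=0$ for all $i>0$.

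An alternative, cleaner route I would also mention: the flat torsionless connection $\nabla_0$ underlying the affine structure has curvature tensor which, combined with the K\"ahler identities (in the Frobenius case) or directly the flatness of $\nabla_0$ in the pre-Frobenius/affine case, forces $\Theta=0$ as an $\mathrm{End}(TM)$-valued form on the real manifold, so $P_i(\Theta)=0$ pointwise and $c_i(M)=0$. For the genuinely pre-Frobenius case one should be a little careful: the affine structure still gives a flat real affine connection, and the standard fact that a compact complete flat affine manifold has all rational Pontryagin/Chern classes zero applies; I would cite this via the Chern--Weil argument above applied to $\nabla_0$.

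The main obstacle is the pre-Frobenius case: one must make sure the holomorphic (or at least flat affine) connection is genuinely available without assuming associativity of the tangent-sheaf product, i.e. that ``pre-Frobenius'' already carries the affine-structure axiom. Given the setup in Section \ref{S:WDVV}, where a pre-Frobenius manifold is required to be an affine manifold equipped with $\con{0}$ flat and torsionless, this is built in, so the obstacle dissolves; the remaining work is the routine Hodge-type argument, which I would keep brief. One subtlety worth flagging is that for the Chern--Weil representative one wants a connection compatible with the complex structure; in the K\"ahler--Frobenius case the holomorphic affine connection constructed in Theorem \ref{T:Main} serves, and in the bare pre-Frobenius case one passes to the complexified flat connection and notes its curvature still vanishes.
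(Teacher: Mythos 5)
Your proposal is correct and rests on the same pivot as the paper: a compact complex (pre-)Frobenius manifold carries a holomorphic affine connection, and this forces the Chern classes to vanish. The difference is that the paper simply \emph{cites} Kobayashi for the implication (holomorphic affine structure $\Rightarrow c_i=0$ for $i\geq n/2$ in general, and $c_i=0$ for all $i>0$ in the K\"ahler case), whereas you actually \emph{prove} that implication via Chern--Weil: the curvature of the extended connection $\nabla^{1,0}+\bar\partial$ is of pure type $(2,0)$, so $P_i(\Theta)$ represents $c_i$ by a $(2i,0)$-class, and a real class of pure type $(2i,0)$ with $i>0$ vanishes. This buys self-containedness at the cost of a hypothesis you should make explicit: the final Hodge-theoretic step ($H^{2i,0}\cap H^{0,2i}=0$ inside $H^{2i}$) requires the Hodge decomposition, i.e.\ the K\"ahler condition --- exactly the dichotomy visible in the paper's two-line citation, where the non-K\"ahler case only yields partial vanishing. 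In the present framework this is harmless, since by Lemma \ref{L:Kahler} the (pre-)Frobenius manifolds under consideration are K\"ahler, but the argument as written would not give full vanishing for a general compact complex manifold with a holomorphic affine connection. Two small inaccuracies to fix: the parenthetical ``holomorphic (hence flat)'' is wrong --- a holomorphic connection need not be flat, and your main argument does not need flatness anyway; and in your ``alternative route'' via the flat real connection $\nabla_0$, Chern--Weil computes Chern classes of $(TM,J)$ only if $\nabla_0 J=0$, which you correctly flag but should justify (it holds here because $\nabla_0$ is the Levi--Civita connection of a flat K\"ahler metric).
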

\begin{proof}
According to Kobayashi, we have the following:

\medskip

$$\begin{cases}
 M \text{\,has a holomorphic affine structure  then} & c_i(M)=0,\quad i\geq n/2  \\
M \text{\, has a holomorphic affine structure and is K\"ahler then} &  c_i(M)=0,\quad i>0
\end{cases}$$
   
\end{proof}

\medskip 

From this statement, it follows  that the Chern's conjecture (in affine geometry) is true for Frobenius manifolds. Chern's conjecture states that the top Chern class (Euler Characteristic) of a compact affine manifold vanishes. 

\begin{Theorem}\label{T:Chern}
The Chern conjecture is true for (pre-)Frobenius manifolds. 
\end{Theorem}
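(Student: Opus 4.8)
The plan is to deduce Theorem \ref{T:Chern} directly from Lemma \ref{L:Chern} together with the structural results already established for (pre-)Frobenius manifolds. The key observation is that Chern's conjecture asserts the vanishing of the top Chern class $c_n(M)$ of a compact affine manifold $M$ (equivalently, by the Chern--Gauss--Bonnet theorem, the vanishing of its Euler characteristic $\chi(M)$), and Lemma \ref{L:Chern} already gives us the vanishing of \emph{all} Chern classes $c_i(M)=0$ for $i>0$ in the K\"ahler case, and $c_i(M)=0$ for $i\geq n/2$ in the merely affine case. So the first step is to record that a compact complex (pre-)Frobenius manifold carries, by construction (Theorem \ref{T:Main} and the definitions in Section \ref{S:KFmfd}), a holomorphic affine structure: indeed vanishing curvature forces the existence of a flat torsionless holomorphic affine connection, hence an affine manifold in the sense of Section \ref{S:1.2}.

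Second, I would split into the two cases covered by Lemma \ref{L:Chern}. If $M$ is a K\"ahler (pre-)Frobenius manifold, then Lemma \ref{L:Chern} gives $c_i(M)=0$ for all $i>0$; in particular the top class $c_n(M)=0$, so $\chi(M)=0$ and Chern's conjecture holds. If one only assumes the affine (pre-)Frobenius structure without the K\"ahler metric, one still gets $c_i(M)=0$ for $i\geq n/2$ from the first branch of Lemma \ref{L:Chern}; since $n\geq n/2$ for all $n\geq 0$, the top class $c_n(M)$ falls in this range, so again $c_n(M)=0$ and $\chi(M)=0$. Either way the top Chern class vanishes, which is exactly the assertion of Chern's conjecture.

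Third, I would add the remark that, in the genuinely Frobenius (as opposed to pre-Frobenius) setting, there is an independent route: a $\C$-Frobenius manifold has, by the Proposition following Theorem \ref{T:Main}, the underlying structure of a compact complete flat affine manifold, and its fundamental group is an affine crystallographic group (the Lemma just after that Proposition); such manifolds are finitely covered by a complex torus $T=\C^n/\Lambda$, whose Euler characteristic is $0$, and $\chi$ is multiplicative in finite covers, so $\chi(M)=0$. This gives a second, more geometric proof in the Frobenius case and confirms consistency with Lemma \ref{L:Chern}.

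I do not expect a serious obstacle here, since the statement is essentially a corollary of Lemma \ref{L:Chern}: the only point requiring a little care is making sure the top-degree index $i=n$ really lies in the range $i\geq n/2$ guaranteed by Kobayashi's affine (non-K\"ahler) estimate, which is immediate, and checking that ``Chern's conjecture'' is being used in its top-Chern-class / Euler-characteristic formulation rather than some stronger statement about all intermediate classes. One should also state clearly that the pre-Frobenius case is covered because Lemma \ref{L:Chern} is stated for (pre-)Frobenius manifolds and its proof only uses the holomorphic affine structure (plus the K\"ahler condition when available), both of which a pre-Frobenius K\"ahler manifold possesses.
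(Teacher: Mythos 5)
Your proof is correct and takes essentially the same route as the paper: the paper's own argument is simply that a (pre-)Frobenius manifold is affinely flat and then invokes Lemma \ref{L:Chern} to conclude that all Chern classes (in particular the top one) vanish. Your additional case split on whether the K\"ahler hypothesis is available, and the alternative argument via finite torus covers and multiplicativity of the Euler characteristic, are consistent elaborations but not a different method.
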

\begin{proof}
By definition, a (pre-)Frobenius manifold is an affinely flat  manifold.
By Lem~\ref{L:Chern}, a compact complex Frobenius manifold has all vanishing Chern classes. 
\end{proof}

\medskip

\section{Properties of K\"ahler-Frobenius manifolds}\label{S:Properties}

\subsection{K\"ahler--Einstein compact manifolds}\label{S:3.1}
A manifold is K\"ahler--Einstein if it admits a  K\"ahler--Einstein metric. A K\"ahler--Einstein metric implies that the Ricci form is proportional to $\tilde{\omega}$. So, there exists a constant $\dot\lambda$ such that $Ricc(\tilde{\omega})=\dot\lambda \tilde{\omega}$.

\smallskip 

If $M$ is a compact K\"ahler manifold with vanishing first Chern class $c_1(M)=0$ then in any K\"ahler class there exists a unique K\"ahler--Einstein metric with constant $\dot\lambda =0$. If the constant is $\dot\lambda =0$ then the manifold is Ricci flat. An important remark is the following. 

\smallskip

\begin{rem}
Note that if $M$ is Ricci flat then it does not imply that the manifold $M$ is flat. However, there might exist under some conditions a flat submanifold $N$ in $M.$  This is discussed in Prop~\ref{P:Ricflat} in the framework of Frobenius and pre-Frobenius manifolds. 
    
\end{rem}

\,

We prove that:

\begin{prop}\label{P:Kahler--Einstein}
Let $M$ be a compact K\"ahler--Einstein manifold with vanishing curvature. Then, $M$ is a Frobenius manifold. 
\end{prop}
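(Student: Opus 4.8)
The plan is to reduce Proposition \ref{P:Kahler--Einstein} directly to Theorem \ref{T:Main}, since the hypotheses of the two statements almost coincide. First I would observe that a compact K\"ahler--Einstein manifold is, by definition, a compact complex K\"ahler manifold; the additional Einstein condition $Ricc(\tilde\omega)=\dot\lambda\,\tilde\omega$ plays no role once we also assume vanishing curvature, because a vanishing full curvature tensor forces in particular the Ricci curvature to vanish, hence $\dot\lambda=0$ and the metric is automatically Ricci flat. So the K\"ahler--Einstein hypothesis is subsumed: what we really have in hand is a compact complex K\"ahler manifold with vanishing curvature.

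Next I would invoke Theorem \ref{T:Main} verbatim: a compact complex K\"ahler manifold with vanishing curvature is a hermitian Frobenius manifold. By the terminological convention fixed immediately after Theorem \ref{T:Main}, a K\"ahler manifold carrying the structure of a hermitian Frobenius manifold is precisely what is meant by a K\"ahler--Frobenius manifold, so the conclusion ``$M$ is a Frobenius manifold'' is exactly the output of Theorem \ref{T:Main}. Concretely, the proof would recall the chain established there: vanishing curvature yields a holomorphic affine structure (the obstruction class in $H^1(M,\Omega^1\otimes\Omega^1\otimes\cO)$ vanishes), hence a flat torsionless connection $\con{0}$; the K\"ahler potential provides the rank-three symmetric tensor $\Phi_{ab\bar c}$; the covariant derivative defines the commutative multiplication $\circ$ on $T_M$; and vanishing of the curvature tensor is equivalent, via Equation \eqref{E:R2} and K\"ahlerness, to the hermitian WDVV equation, which by Lemma \ref{L:1to2} is equivalent to $M$ being a hermitian Frobenius manifold.

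The one genuinely content-bearing step to spell out is the implication ``K\"ahler--Einstein $+$ vanishing curvature $\Rightarrow$ the setting of Theorem \ref{T:Main},'' i.e.\ checking that nothing in the Einstein normalization obstructs the Frobenius structure; this is immediate because the Frobenius structure is built from the flat connection and the potential alone, quantities that survive unchanged under the extra Einstein hypothesis. I do not anticipate a real obstacle here: the proposition is essentially a corollary of Theorem \ref{T:Main} packaged for the K\"ahler--Einstein audience, and the only thing to be careful about is to phrase it as ``the hypotheses imply those of Theorem \ref{T:Main}'' rather than re-deriving the affine structure. If one wished to make the statement slightly less trivial, one could instead emphasize that among compact K\"ahler--Einstein manifolds, the flat ones are exactly the finite quotients $T/G$ classified later in Theorem \ref{T:Class}, but for the proof of Proposition \ref{P:Kahler--Einstein} as stated the direct appeal to Theorem \ref{T:Main} suffices.
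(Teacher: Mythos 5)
Your proposal is correct and matches the paper's approach: the paper's own proof opens with ``The proof is essentially the same as for Theorem \ref{T:Main}'' and then re-runs that construction (flat affine connection, rank-three tensor from the K\"ahler potential, multiplication via Christoffel symbols, associativity from $R=0$), which is exactly the chain you recall. The only cosmetic difference is that the paper justifies the holomorphic affine structure in this case by citing Kobayashi's result that a compact K\"ahler--Einstein manifold admits a holomorphic affine connection if and only if its curvature vanishes, whereas you reuse the cocycle argument from Theorem \ref{T:Main}; both land in the same place.
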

 
\begin{proof}
The proof is essentially the same as for Theorem \ref{T:Main}.
\, 

Let $M$ be a compact K\"ahler--Einstein manifold. Denote the K\"ahler--Einstein metric by $g$.  By definition, there exists a K\"ahler potential $\Phi$ such that  $g_{i\bar{j}}=\partial_i \partial_{\bar{j}}\Phi$.

\,

1) \, A compact K\"ahler--Einstein manifold admits a holomorphic affine connection if and only if its curvature vanishes (see Kobayashi \cite{Ko1}). 

\, 

 Therefore, there exists a torsionless flat affine connection $\nabla_0$. 
 Assume this is a Levi--Civita connection (or Chern connection if we wish to step towards a Yang--Mills setting).
 Then, the connection $\nabla_0$ is compatible with the metric. 

\,

2) \, Define the rank three symmetric tensor $\Phi_{ijk}=\partial_i\partial_j\partial_{\bar{k}}\Phi$. 

 It satisfies Equation \ref{E:rk3}. Therefore, from our construction we have the triple $(\nabla_0,g,\Phi)$ on $M$ .

\,

3) \, We define an $\cO_M$-symmetric bilinear multiplication operation $\circ$ on the tangent sheaf $T_M$. 

 The tangent sheaf $T_M$  is locally freely generated by $(\partial_a=\partial/\partial x^a)$, where  $(\partial_a)$ determine local vector fields defined for the local coordinates $(x^a)$ in $M$.

 Therefore, there exists a multiplication operation $\circ:T_M \times T_M \to T_M$ such that in local coordinates

\[\partial_i \circ \partial_j = \sum_k\Gamma_{ij}^k\, \partial_k,  \] 

where the Christoffel symbols are defined as $\Gamma_{ij}^k:=\Phi_{ij\bar{e}}\, g^{\bar{e}k}$ and $g^{\bar{e}f}=(g_{\bar{e}f})^{-1}$.

\smallskip

 By definition of a K\"ahler metric, one has  non-vanishing Christoffel symbols: 
$$\Gamma_{ij}^k \quad \text{and} \quad \Gamma_{\bar{i}\bar{j}}^{\bar{k}}.$$

In the complex case, we decompose the tangent space as $\cT_M=\cT_{M}^{1,0}\oplus \cT_{M}^{0,1}$.

 So, we have a linear combination of two subalgberas defined on $\cT_{M}^{1,0}$ and $\cT_{M}^{0,1}$.

\,

4) \, We prove that the algebras $(T^{1,0}_M, \circ)$ and $(T^{0,1}_M, \circ)$ are associative algebras. 

Since we have that $d^2\omega=0$, \, $\sum_b dx_b \Phi_{bcd}$ is closed. So, the following is satisfied: $$\partial_a\Phi_{bcd}=\partial_{b} \Phi_{acd}.$$

\,

By hypothesis, the curvature is 0. As in Equation \ref{E:C}, the curvature tensor $R_{X,Y}(Z)$ obeys to the following:  $$ R_{X,Y}(Z)= X \circ (Y \circ Z)-Y \circ (X \circ Z)$$ given $X,Y,Z\in T_M^{1,0}$ (or respectively $X,Y,Z\in T_M^{0,1}$).

Therefore, if $R=0$ it implies that $X \circ (Y \circ Z)=Y \circ (X \circ Z).$

\,

By commutativity of the algebra we get: $X \circ (Y \circ Z)=Y \circ (Z \circ X)$.

Iterating Equation \ref{E:C} gives: $Y \circ (Z \circ X)=Z \circ(Y \circ X)$.

Therefore, $X \circ (Y \circ Z)= Z \circ(Y \circ X)$. 

Finally, by commutativity: $$ X \circ (Y \circ Z)=(X \circ Y)\circ Z. $$

\,

In local coordinates, introducing $X=\partial a, Y=\partial_b, Z=\partial_c$
into Equation \ref{E:asso} one has:
\begin{equation}
\nabla_{\partial a}(\, \partial_b \, \circ\,  \partial_c )= \nabla_{\partial b}(\, \partial_a\, \circ \, \partial_c).
\end{equation}

The left hand side of Equation \ref{E:Ass} gives us: 
$\nabla_{\partial a}(\, \partial_b\, \circ\, \partial_c\, )=\nabla_{\partial a}(\, \Gamma_{bc}^k\partial_k\, )$
$= \Gamma_{bk}^l\, \Gamma_{ac}^k \partial_l$

The right hand side of Equation  \ref{E:Ass} gives us: 
$\nabla_{\partial b}(\,\partial_a\, \circ\, \partial_c)= \nabla_{\partial b}(\,\Gamma_{ac}^k \partial_k)=\Gamma_{ac}^k\,\Gamma_{bk}^l\, \partial_l.$

The same method applies to $X=\partial_{\bar{a}}, Y=\partial_{\bar{b}}, Z=\partial_{\bar{c}}$. 

\smallskip 

So, we have a Frobenius algebra $(T_M^{1,0}\oplus T_M^{0,1},\circ)$. 

The rest of the construction follows the plan delimited above in proof of Theorem~ \ref{T:Main} .
\end{proof}

\subsection{On some properties}\label{S:Props}

\medskip 

\begin{prop}\label{P:Ricflat}
Let $M$ be a K\"ahler manifold with $c_1(M)=0$. If $M$ is covered by a complex torus
then $M$ is a pre-Frobenius manifold. If $M$ admits a flat submanifold $N$ then $N$ is a Frobenius manifold (possibly real).
\end{prop}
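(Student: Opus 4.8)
The plan is to treat the two assertions of Proposition~\ref{P:Ricflat} separately, using Theorem~\ref{T:Main} and Lemma~\ref{L:Chern} as the main inputs. For the first claim, suppose $\pi\colon T\to M$ is a finite (unramified) covering by a complex torus $T=\C^n/\Lambda$. The torus $T$ carries its standard flat K\"ahler metric, and since a covering map is a local biholomorphism, this metric descends to a K\"ahler metric on $M$ precisely when the deck group of $\pi$ acts by isometries; more safely, one pulls back any K\"ahler metric from $M$, averages over the (finite) deck group to make it invariant, and then observes that the flat connection $\con{0}$ on $T$ associated to its affine structure also descends, since $T$'s affine structure is deck-invariant up to the affine action. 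Thus $M$ inherits a holomorphic affine structure and a flat torsionless connection $\con{0}$ compatible with a K\"ahler metric. One then runs through the pre-Frobenius axioms as in step 5 of the proof of Theorem~\ref{T:Main}: the decomposition $T_M=T_M^{1,0}\oplus T_M^{0,1}$, the multiplication $X\circ Y:=\con{0}_X Y$ in flat coordinates, the local potential $\Phi$ with $g_{a\bar b}=\partial_a\partial_{\bar b}\Phi$, and the rank-three symmetric tensor $\Phi_{ab\bar c}$. The point is that we only claim \emph{pre}-Frobeniusity, so associativity of $(T_M,\circ)$ is not required --- only commutativity, unitality and the invariance $\langle X\circ Y,Z\rangle=\langle X,Y\circ Z\rangle$, all of which follow formally from the K\"ahler identities and the construction of $\circ$, exactly as in the cited steps. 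I would emphasize that the covering hypothesis is what supplies the affine structure (equivalently, via Lemma~\ref{L:Chern} and the discussion of crystallographic groups, the vanishing of the obstruction in $H^1(M,\Omega^1\otimes\Omega^1\otimes\cO)$), even when the curvature of $M$ itself need not vanish.

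For the second claim, suppose $N\subset M$ is a flat submanifold. The induced metric on $N$ is flat, and since $N$ is a submanifold of a K\"ahler manifold with the restricted metric, one first checks whether $N$ is itself K\"ahler (complex submanifolds of K\"ahler manifolds are automatically K\"ahler; a totally geodesic real submanifold of a flat K\"ahler manifold inherits a flat metric but is only ``possibly real'', which is why the statement hedges). In the complex case, $N$ is a compact flat K\"ahler manifold, hence by Theorem~\ref{T:Main} it is a hermitian Frobenius manifold. In the real case --- $N$ a flat (real) submanifold --- one invokes the real version of the theory: a flat Riemannian manifold carrying a Hessian structure, i.e.\ the content of Lemma~\ref{L:2-3R}, so that $N$ is a (real) Frobenius manifold. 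The argument here is essentially a reduction: flatness of $N$ gives the affine structure, and then the appropriate case (real or complex) of the equivalence already proved in Theorem~\ref{T:equ} / Theorem~\ref{T:Main} applies verbatim.

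\textbf{Main obstacle.} The delicate point is the descent of structures along the covering in the first assertion: one must be careful that the affine (equivalently, flat torsionless connection) structure on $T$ is preserved by the deck group --- it is, because the deck transformations are restrictions of affine automorphisms of $\C^n$ (indeed $M$ being covered by $T$ with $M$ compact forces the deck group into the crystallographic picture of Section~\ref{S:1.2}, where transition functions lie in $Aff(n)$) --- and that the K\"ahler potential $\Phi$ can be chosen locally on $M$ compatibly with the one on $T$. Once the affine structure is on $M$, everything else is the routine machinery of steps 4--7 of Theorem~\ref{T:Main}, but with ``Frobenius'' weakened to ``pre-Frobenius'' since we do \emph{not} assume $M$ flat and therefore cannot conclude associativity from Equation~\ref{E:C}. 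For the submanifold statement, the only subtlety is tracking whether $N$ inherits a complex (K\"ahler) structure or merely a real flat Hessian structure, which is exactly the dichotomy flagged by the phrase ``possibly real'' in the statement.
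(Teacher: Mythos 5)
Your proposal is correct in substance and follows the same overall skeleton as the paper (obtain an affine structure on $M$, run the pre-Frobenius machinery of Theorem~\ref{T:Main}, and use flatness to upgrade to associativity on $N$), but it reaches the affine structure by a genuinely different route. The paper does not argue by descent along the covering at all: it first invokes the Calabi--Yau/Fischer--Wolf splitting of a finite cover of $M$ into a torus, Calabi--Yau and hyperK\"ahler factors, and then cites Kobayashi's results (a compact K\"ahler manifold with $c_1=0$ admits a holomorphic projective connection iff it is covered by a complex torus, and a flat K\"ahler--Einstein manifold admits a holomorphic affine structure) to conclude that $M$ carries a holomorphic affine structure; the pre-Frobenius data are then assembled exactly as you describe. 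Your alternative --- lifting to $T$, noting that deck transformations are restrictions of affine automorphisms of $\C^n$, averaging the flat metric over the finite deck group, and pushing the affine structure and invariant K\"ahler metric down to $M$ --- is more self-contained and makes the mechanism explicit, at the cost of having to verify the deck-invariance claims you flag as the main obstacle; the paper's citation-based route avoids that verification but leans on external theorems. For the second assertion the two arguments also diverge slightly: the paper restricts the already-built pre-Frobenius structure on $M$ to $N$ and observes that the associator vanishes exactly where the curvature does, whereas you rebuild the structure intrinsically on $N$ and apply Theorem~\ref{T:Main} (complex case) or Lemma~\ref{L:2-3R} (real case); your version requires $N$ to satisfy the hypotheses of those statements on its own (e.g.\ compactness for Theorem~\ref{T:Main}), a point worth making explicit, while the paper's restriction argument sidesteps it. Both readings of ``flat submanifold'' are left at the same informal level as in the paper, so no genuine gap arises.
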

\begin{proof} Assume $M$ is a compact K\"ahler manifold with vanishing first Chern class $c_1(M)=0$. The first Chern class $c_1$ is given by  $\frac{\sqrt{-1}}{2\pi} R_{\alpha \bar{\beta}}dz^{\alpha} \wedge d z^{\bar{\beta}}$, 
where $R_{\alpha \bar{\beta}}$
Ricci tensor. Therefore, if $c_1=0$ then it admits a K\"ahler metric with vanishing Ricci tensor. 

\,

By Calabi--Yau and Fischer--Wolf \cite{Y2,Y4,FW}, if $M$ is an $n$-dimensional  compact K\"ahler manifold with $c_1=0$ then there exists a finite cover of $M$ which is the product $M_1\times M_2\times M_3$ of manifolds of respective dimensions $n_1,n_2, n_3\geq 0$ such that $\sum n_i=n$ where $M_1$ is a complex torus, $M_2$ is the product of some Calabi--Yau manifolds and $M_3$ the product of some hyperK\"ahler manifolds. 

\,

A compact K\"ahler manifold with $c_1=0$ admits a holomorphic projective connection if and only if it is covered by a complex torus. Assuming $M$ is covered by a complex torus this implies that $M$ admits a holomorphic affine structure. 

\, 

This statement follows from the following facts:
\begin{enumerate}
    \item  A compact K\"ahler--Einstein manifold admits a holomorphic projective connection if and only if it is of constant holomorphic sectional curvature \cite[Theorem 4.3]{Ko}.

\item A compact K\"ahler--Einstein manifold with vanishing curvature admits a holomorphic affine structure \cite[Cor. 4.5]{Ko}. 

\end{enumerate}


\,

We recover the other ingredients forming a pre-Frobenius bundle,
by mimicking the constructions presented in the previous sections/statements. 

\, 

\begin{itemize}
    \item The existence of a holomorphic affine structure implies the existence of a flat torsionless connection compatible  with a hermitian metric. This metric is K\"ahler in the flat coordinates. 

\item The construction of a multiplication operation on the tangent sheaf is obtained by using the compatible (non-degenerate) metric as well as the rank three symmetric tensor. This induces on the tangent sheaf the structure of a pre-Frobenius algebra. Therefore, we have a pre-Frobenius bundle. 
\end{itemize}
\smallskip 

The main difference between the pre-Frobenius and the Frobenius structure lies in the associativity of the algebra. Given a point $x\in M$, the algebra $(TM_x,\circ)$ is associative if and only if the curvature of $M$ at $x$ is null. Therefore, if $M$ admits a submanifold $N$ which is flat then $N$ is a Frobenius manifold.    
\end{proof}

\, 

From this statement we can deduce the following. 

\, 

\begin{cor}
   Let $M$ be a K\"ahler--Einstein compact manifold with $c_1(M$)=0. Suppose that it is covered by a complex torus. 
Then, $M$ admits a pre-Frobenius manifold structure. Suppose that the first Betti number $b_1(M)$ is non null. Then $M$ contains a submanifold $N$ being a Frobenius manifold. This Frobenius manifold coincides with the Riemannian flat torus.
\end{cor}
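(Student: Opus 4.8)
The plan is to combine Proposition~\ref{P:Ricflat} with the classical structure theory for compact K\"ahler manifolds with $c_1 = 0$. First I would invoke Proposition~\ref{P:Ricflat}: since $M$ is a K\"ahler--Einstein compact manifold with $c_1(M) = 0$ and it is covered by a complex torus, $M$ admits a pre-Frobenius manifold structure; this gives the first assertion directly. The point of the corollary is to promote "pre-Frobenius" to a genuine flat (hence Frobenius) submanifold under the extra hypothesis $b_1(M) \neq 0$.

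Next I would extract a flat submanifold from the assumption $b_1(M) \neq 0$. The Albanese map $\mathrm{alb}: M \to \mathrm{Alb}(M)$ has target a complex torus of dimension $b_1(M)/2 > 0$. Using the fact that $M$ is covered by a complex torus $T$ (equivalently, its universal cover is $\C^n$ with deck group an affine crystallographic group, by the lemma preceding this corollary), I would identify a $\mathrm{alb}$-fibre direction or, more precisely, a flat totally geodesic subtorus $N \subseteq M$ arising as the image of a translation-invariant flat subspace of $\C^n$ on which the holonomy acts trivially. Concretely: the holonomy representation of the flat manifold $M$ (finite, since $M$ is Bieberbach up to the torus cover) has a nontrivial invariant subspace precisely when $b_1(M) = \dim H^1(M,\R) = \dim (\R^n)^{\mathrm{holonomy}} > 0$; the corresponding invariant affine subspace descends to a flat submanifold $N$ which is a flat Riemannian torus. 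This is the step I expect to be the main obstacle, since it requires being careful about (i) why the invariant subspace of the holonomy closes up to a compact subtorus rather than merely an immersed affine subspace, and (ii) that the K\"ahler structure restricts compatibly to $N$ so that $N$ inherits the holomorphic affine structure and compatible K\"ahler metric needed to run the Frobenius construction.

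Finally, with $N$ flat, I would apply Theorem~\ref{T:Main} (or the last paragraph of the proof of Proposition~\ref{P:Ricflat}, which states that a flat submanifold of a pre-Frobenius K\"ahler manifold is a Frobenius manifold): vanishing curvature of $N$ makes the algebra $(T_{N,x},\circ)$ associative at every $x$, upgrading the pre-Frobenius bundle restricted to $N$ to a Frobenius bundle, hence $N$ is a Frobenius manifold by Theorem~\ref{T:equ}. Since $N$ is by construction a flat compact K\"ahler torus with its flat metric, it coincides with the Riemannian flat torus, giving the last sentence. I would close by noting that all the differential-geometric ingredients (the flat torsionless connection $\con{0}$, the K\"ahler potential $\Phi$, the rank-three symmetric tensor, and the multiplication $\circ$) restrict from $M$ to $N$ along the totally geodesic inclusion, so no new construction is needed beyond checking the associativity that flatness supplies.
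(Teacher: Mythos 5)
Your proposal is correct in its conclusions and in its use of Proposition~\ref{P:Ricflat} for the first assertion, but for the key step --- producing the flat torus submanifold from $b_1(M)\neq 0$ --- it takes a genuinely different route from the paper. The paper simply cites the Fischer--Wolf structure theorem (\cite[Theorems 4.1 and 4.5]{FW}): for any compact connected Ricci-flat Riemannian manifold $M$ there is a finite Riemannian covering $T\times N\to M$ with $T$ a flat Riemannian torus of dimension at least $b_1(M)$, so $b_1(M)>0$ forces a positive-dimensional flat torus factor whose image in $M$ is the desired flat (hence, by the last paragraph of Proposition~\ref{P:Ricflat}, Frobenius) submanifold. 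You instead reconstruct this torus by hand via Bieberbach theory: since $M$ is covered by a complex torus it is flat, the holonomy representation preserves the translation lattice, the fixed subspace $(\R^{2n})^{\mathrm{holonomy}}$ has dimension $b_1(M)$ and is rational with respect to that lattice (so it closes up to a compact subtorus), and it is $J$-invariant because the holonomy lies in $U(n)$ --- which resolves the two obstacles (i) and (ii) you flag but do not fully discharge. Your argument is more self-contained and exploits the extra hypothesis that $M$ is covered by a torus; the paper's citation of Fischer--Wolf is shorter and applies to the broader Ricci-flat setting without requiring flatness of $M$ itself. The final step (flatness of $N$ gives associativity, hence a Frobenius structure, and $N$ is the Riemannian flat torus) agrees with the paper in both routes.
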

\begin{proof}
    This statement follows essentially from Theorem~\ref{P:Ricflat} and from \cite[Theorem 4.1 and Theorem 4.5]{FW}. The latter states that if $M$ is a compact connected Ricci-flat Riemannian manifold (i.e. satisfying the condition $c_1(M)=0$) then there exists a flat Riemannian torus $T$ of dimension greater or equal to the first Betti number of $M$ and a simply connected compact Ricci-flat Riemannian manifold $N$ as well as a finite Riemannian covering  such that $T\times N\to M$. 
\end{proof}
\,

\begin{lem}
A flat K\"ahler--Einstein submanifold $M$ of $\C^n$ is a totally geodesic Frobenius manifold.
\end{lem}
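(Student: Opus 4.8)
The plan is to combine two facts that are already in force: (i) a flat K\"ahler--Einstein submanifold $M$ of $\C^n$ is in particular a compact (or complete) flat K\"ahler manifold with vanishing curvature, so Theorem~\ref{T:Main} applies and endows $M$ with the structure of a hermitian (K\"ahler--)Frobenius manifold; and (ii) a flat submanifold of Euclidean (or Hermitian) affine space with induced connection agreeing with its intrinsic flat connection must be totally geodesic. So the statement is really the assertion that the Frobenius structure produced by Theorem~\ref{T:Main} is compatible with the ambient embedding in the sense that $M$ sits inside $\C^n$ as a totally geodesic (affine-linear, up to the lattice identifications) subvariety.

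First I would recall that since $M\subset\C^n$ is flat and K\"ahler, by Lem.~\ref{L:Kahler} and Theorem~\ref{T:Main} it carries a holomorphic affine structure, a flat torsionless connection $\con{0}$, a K\"ahler metric $g=\partial\bar\partial\Phi$ compatible with it, and the multiplication $\circ$ on $T_M$ with $\partial_a\circ\partial_b=\sum_c\Gamma_{ab}^c\partial_c$; the hermitian WDVV equation holds as in \eqref{E:HWDVV}, so $(M,g,\con{0},\circ,A)$ is a hermitian Frobenius manifold. This gives the ``Frobenius manifold'' half of the conclusion directly, with no new work.

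Next I would address ``totally geodesic.'' The idea is to use the Gauss equation together with flatness: the second fundamental form $\mathrm{II}$ of $M\hookrightarrow\C^n$ satisfies, for the ambient flat metric, $\langle R^M(X,Y)Z,W\rangle = \langle \mathrm{II}(X,W),\mathrm{II}(Y,Z)\rangle - \langle \mathrm{II}(X,Z),\mathrm{II}(Y,W)\rangle$. Since $M$ is flat, $R^M\equiv 0$, so $\mathrm{II}$ satisfies the ``flat'' relation $\langle \mathrm{II}(X,W),\mathrm{II}(Y,Z)\rangle = \langle \mathrm{II}(X,Z),\mathrm{II}(Y,W)\rangle$ for all $X,Y,Z,W$. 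For a K\"ahler submanifold of $\C^n$ the second fundamental form is moreover a section of $\mathrm{Sym}^2(T^{1,0}_M)^*\otimes N^{1,0}$ and is related to the derivative of the Gauss map; combining the symmetry just obtained with the fact that $M$ is K\"ahler--Einstein with zero Ricci curvature (Ricci-flat, forced by flatness), the standard argument — e.g. taking a trace / using that a nonzero $\mathrm{II}$ would force a nonzero holomorphic sectional curvature contribution — yields $\mathrm{II}\equiv 0$, i.e. $M$ is totally geodesic in $\C^n$, hence an affine complex subspace (translated linear subspace, modulo the identifications making $M$ compact, so an affine subtorus).

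The main obstacle I expect is this last step: carefully ruling out a nonzero second fundamental form for a merely \emph{flat} (not a priori complete or compact) K\"ahler submanifold, since the Gauss-equation identity $\langle \mathrm{II}(X,W),\mathrm{II}(Y,Z)\rangle=\langle \mathrm{II}(X,Z),\mathrm{II}(Y,W)\rangle$ does not by itself force $\mathrm{II}=0$ for an indefinite or merely symmetric form — one genuinely needs positivity of the ambient metric on the normal bundle (which one does have, $\C^n$ being positive definite) plus the Ricci-flatness to trace against. A clean way to finish is: contract the flat Gauss identity with $g^{Y\bar Z}$ to get $\sum \langle\mathrm{II}(X,\cdot),\mathrm{II}(\cdot,W)\rangle$-type terms; the Ricci-flat condition makes the relevant contraction of $\mathrm{II}$ against itself vanish, and positivity of the normal metric then forces $\mathrm{II}(X,W)=0$ pointwise for all $X,W$. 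Once $\mathrm{II}\equiv 0$, $M$ is totally geodesic; being totally geodesic in $\C^n$ it inherits the ambient affine-linear structure, which is exactly the Frobenius affine structure supplied by Theorem~\ref{T:Main}, so the two structures agree and $M$ is a totally geodesic Frobenius manifold.
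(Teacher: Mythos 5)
Your proposal is correct, and the first half (getting the Frobenius structure from flatness plus the K\"ahler condition) is essentially what the paper does, except that the paper invokes Proposition~\ref{P:Kahler--Einstein} rather than Theorem~\ref{T:Main}; the two are interchangeable here. Where you genuinely diverge is on ``totally geodesic'': the paper disposes of this in one line by citing Umehara~\cite{U}, who proved that \emph{any} K\"ahler--Einstein submanifold of $\C^n$ is totally geodesic, whereas you reprove the special case at hand via the Gauss equation. Your route is sound and in fact can be streamlined: for a complex submanifold of $\C^n$ the Gauss equation gives $R_{a\bar a c\bar c}=-\lvert\sigma_{ac}\rvert^2\le 0$, so flatness alone (no appeal to Ricci-flatness or to a trace) already forces $\sigma\equiv 0$ pointwise; your worry about the indefinite-looking symmetric identity evaporates because holomorphic (bi)sectional curvature is monotone under complex submanifolds with a definite sign. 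What each approach buys: the citation of Umehara covers the general Einstein constant $\dot\lambda\neq 0$ (which is the genuinely hard case and is irrelevant here, since flat implies Ricci-flat), while your argument is self-contained, elementary, and pointwise, so it does not secretly require compactness or completeness. One shared caveat worth flagging: both your appeal to Theorem~\ref{T:Main} and the paper's appeal to Proposition~\ref{P:Kahler--Einstein} assume compactness, which is not among the hypotheses of the lemma as stated; the totally geodesic part is unaffected, but the Frobenius part implicitly adds that hypothesis in both proofs.
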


\begin{proof}
By assumption $M$ is flat. The vanishing of the curvature is a necessary condition to have holomorphic affine structure. So, $M$ is a holomorphic affine manifold.
Applying Proposition \ref{P:Kahler--Einstein}, we deduce that $M$ is a Frobenius manifold.
By Umehara~\cite{U}, any  K\"ahler--Einstein submanifold of $\C^n$ is totally geodesic. Therefore, if $M$ a flat K\"ahler--Einstein submanifold $M$ of $\C^n$ and if $M$ is flat then it is a flat totally geodesic manifold, satisfying the axioms of a Frobenius manifold. 
    
\end{proof}

\section{Classification of K\"ahler--Frobenius surfaces $(n=2)$}\label{S:2D}
We find the classes of surfaces in algebraic geometry carrying a Frobenius/ pre-Frobenius structure. 
 
\medskip 
\subsection{Conventions}
To each surface $S$ we associate numerical invariants:

\begin{itemize}
\item $K$ is the canonical divisor
    \item $q=h^{0,1}$ the irregulairty 
    \item $p_g(S)=h^{0,2}=h^{2,0}=P_1$: geometric genus 
    \item $P_n(S)=H^0(S,\cO_S(nK))$, the plurigenera of $S$. 
    \item Betti numbers $b_i=\dim H^{i}(S)$.
\end{itemize}

A minimal surface is to be understood in the sense that no exceptional curves of self intersection -1 are contained in it.

\subsubsection{Algebraic surfaces of general type}

\begin{lem}\label{L:AlgSurf}
If $M$ is a compact K\"ahler surface with universal covering biholomorphic to $\C^2$ then $M$ is a Frobenius manifold. 

\end{lem}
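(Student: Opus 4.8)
The strategy is to show that the hypothesis forces $M$ to be flat, and then invoke Theorem~\ref{T:Main}. The key observation is that if the universal covering $\widetilde{M}$ is biholomorphic to $\C^2$, then $M = \C^2/\Gamma$ where $\Gamma \cong \pi_1(M)$ acts freely, properly discontinuously and cocompactly (since $M$ is compact) by biholomorphisms of $\C^2$. I would first argue that for a compact K\"ahler surface this group of biholomorphisms must be affine: by a theorem on compact quotients of $\C^n$ (as already used implicitly in the discussion of complex tori and crystallographic groups in Sec.~\ref{S:1.2}), the deck transformations act by complex-affine maps, so $\Gamma \subset \mathrm{Aff}(2,\C) = GL(2,\C)\ltimes\C^2$. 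This equips $M$ with a holomorphic affine structure, hence a flat torsionless affine connection $\con{0}$.

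\textbf{Key steps.} (1) Identify $M = \C^2/\Gamma$ with $\Gamma$ acting freely and cocompactly; compactness rules out the noncompact free quotients and pins down $\Gamma$ as a cocompact discrete subgroup. (2) Upgrade the action to an affine action: use the classification-type input that a compact complex surface covered by $\C^2$ with a K\"ahler metric has its fundamental group realized inside $\mathrm{Aff}(2,\C)$; equivalently, invoke that such $M$ carries a holomorphic affine structure (this is exactly the content needed to parallel the complex-torus case and the Bieberbach-group discussion). (3) Since $M$ is compact K\"ahler and admits a holomorphic affine structure, apply Lem.~\ref{L:Chern} to get $c_i(M)=0$ for all $i>0$; combined with the K\"ahler hypothesis and the Calabi--Yau / Beauville--Bogomolov type decomposition recalled in Prop.~\ref{P:Ricflat}, conclude that $M$ is in fact \emph{flat} (the affine structure on a compact K\"ahler surface with $c_1=0$ covered by $\C^2$ is the standard flat one — there is no room for a nontrivial linear part obstruction in dimension two once the metric is K\"ahler and Ricci-flat). (4) With $M$ compact K\"ahler and flat, apply Theorem~\ref{T:Main} directly to conclude $M$ is a Hermitian (K\"ahler--)Frobenius manifold.

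\textbf{Main obstacle.} The delicate point is step (2)--(3): passing from ``$\widetilde{M}\cong\C^2$ as a complex manifold'' to ``$\Gamma$ acts by \emph{affine} transformations'' and then to genuine \emph{flatness} (vanishing curvature), not merely Ricci-flatness. A priori the biholomorphisms of $\C^2$ form a huge group, and a cocompact quotient could in principle inherit only a projective or affine structure without the metric being flat. The resolution should come from combining the K\"ahler condition with $c_1(M)=0$ (forced by Lem.~\ref{L:Chern} once the affine structure is in place, or by the structure theory of surfaces with $\widetilde{M}=\C^2$): such a surface is, up to finite \'etale cover, a complex torus (a hyperelliptic surface or a torus in the Enriques--Kodaira list), and tori are flat K\"ahler; flatness then descends along the finite cover because curvature is local. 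I would present this as: the surface is finitely covered by a complex $2$-torus $\C^2/\Lambda$, which carries its standard flat K\"ahler metric, and flatness is preserved under the covering, so $M$ is flat compact K\"ahler; Theorem~\ref{T:Main} then applies.
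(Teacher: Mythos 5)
Your proposal is correct and follows essentially the same route as the paper: both reduce the statement to the classification result (Kodaira/Iitaka) that a compact K\"ahler surface covered by $\C^2$ is a finite quotient $T/G$ of a complex torus (a torus or a hyperelliptic surface), obtain the holomorphic affine structure and vanishing Chern classes, deduce flatness from the torus cover, and conclude by Theorem~\ref{T:Main}. The obstacle you flag (affine action versus genuine metric flatness) is resolved in the paper exactly as you suggest, via the finite \'etale torus cover.
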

\begin{proof}
According to Kodaira, if $M$ is a compact K\"ahler surface with universal covering, biholomorphic to $\C^2$, then $M$ is a quotient $M=T/G$ of a complex torus $T$ by the free action of a finite group $G$ 
(which contains no translations). So, $M$ is a hyperelliptic surface.

\smallskip

By Kobayashi \cite{Ko}, $M$ has a holomorphic affine structure and by hypothesis $M$ is K\"ahler. So, all Chern classes vanish.

\smallskip

Now, we have all necessary ingredients to 
apply the construction outlined in the proof of the Theorem~\ref{T:Main}. This allows us to deduce that $M$ has a pre-Frobenius bundle structure. Given that manifolds of type $M=T/G$ are hyperelliptic surfaces and that those surfaces have vanishing curvature we can conclude that $M$ is flat. Therefore, $M$ is a Frobenius manifold.
\end{proof}

\medskip 

Lem.~\ref{L:AlgSurf} is a specific case of Iitaka's conjecture. The Iitaka conjecture states that if $M$ is a compact K\"ahler manifold having a universal covering biholomorphic to $\C^n$, then necessarily $M$ is a quotient $M=T/G$ of a complex torus $T$ by the free action of a finite group $G$ (which contains no translations). It is nowadays known that Iitaka's conjecture is true in low dimensions (dimension 2 and dimension 3).

\subsection{Classification of Frobenius surfaces}
\smallskip 

List of notation in the table below: 

-- (FM) stands for {\it Frobenius manifold}; 

-- (AS) stands for {\it affine holomorphic structure} ;

-- (K) stands for to {\it K\"ahler geometry}. 

We will prove the following. 

\medskip

\begin{Theorem}\label{T:ClassSurf}
    
~
\medskip 
There exist 8 compact K\"ahler Frobenius surfaces. 
The complex Frobenius surfaces are classified as in the table below: 

\,

\begin{center}

\begin{tabular}[ht]{|c|l|c|c|c|c|c|c|}
\hline 
 Kodaira &  Surface  type   & $b_i$ & $p_g$ & Chern  && &  \\
 class &  &  & & classes &  (FM)& (AS)& (K)\\
   \hline 

 &  &&&&& & \\

 & Complex tori & $b_1=4,\,b_2=6$ & $p_g=1$ & $c_1=c_2=0$ &  yes& yes & yes \\
&abelian surfaces&&&&&&  \\
 &  &&&&& & \\
 & Hyperelliptic surfaces & $b_1=2,\, b_2=2$ &0& $c_1=c_2=0$& yes&yes& yes \\

& &&&& && \\
$VIII_0$ & Minimal elliptic surfaces & odd $b_1$ & $p_g>0$ & $c_1^2=0,\, c_2=0$ & no & yes& no \\

& &&&&&&  \\

$VII_0$ & Minimal surfaces$^\star$  & $b_1=1$, $b_2=0$ & $p_g=0$ & & no & yes& no \\
& (Inoue surface) &&&& &&  \\
 & &&&& & &\\
$VII_0$ & Hopf surface  & $b_1=1$, $b_2=0$  &$p_g=0$ & $c_1=c_2=0$ & no  & yes& no\\

& covered by a primary   &&& &   && \\
& Hopf surface &&& &  
& & \\
&  such that $c(m-1)=0$  &&& & & & \\
& (see Sec.\ref{S:4.2}) &&&& && \\
&&&&& && \\
& Ruled & $b_1=2g,\, b_2=2$&0& $c_1^2=8(1-g)=$&no& no& yes\\
& surface && &$2c_2\neq 0$ & & & \\

& &&&&&  &\\

& K3 & $b_1=0,\, b_2=22$&1& $c_1=0$,\, $c_2=24$ & no & no& yes \\
& &&&& & &\\
 \hline
\end{tabular}  

 \end{center}

\end{Theorem}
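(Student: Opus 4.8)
The plan is to go through the Enriques--Kodaira classification of compact complex surfaces and, for each Kodaira class, decide membership in the three columns (FM), (AS), (K) using the criteria already established: a compact complex K\"ahler surface with vanishing curvature is a Frobenius manifold (Theorem \ref{T:Main}); a surface admitting a holomorphic affine structure has all $c_i$ of high index vanishing, and if moreover K\"ahler then all $c_i=0$ (Lemma \ref{L:Chern}); and Kobayashi's results linking holomorphic affine structures to vanishing curvature in the K\"ahler case. The combinatorial skeleton is: (FM) holds $\iff$ the surface is K\"ahler and flat $\iff$ (by Bieberbach/crystallographic theory) it is a finite free quotient $T/G$ of a complex torus with $G$ containing no translations; and in dimension $2$ these are exactly the complex tori/abelian surfaces and the hyperelliptic surfaces.

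The steps, in order, would be: \emph{(i)} Establish the positive cases. For complex tori $T=\C^2/\Lambda$ the flat metric is manifestly K\"ahler with zero curvature, so Theorem \ref{T:Main} applies directly, and one reads off $b_1=4$, $b_2=6$, $p_g=1$, $c_1=c_2=0$. For hyperelliptic surfaces one invokes Lemma \ref{L:AlgSurf}: they are precisely the $M=T/G$ with universal cover $\C^2$, they carry a holomorphic affine structure by Kobayashi, they are K\"ahler, hence flat, hence Frobenius; the invariants $b_1=2$, $b_2=2$, $p_g=0$, $c_1=c_2=0$ are classical (Bagnera--de Franchis). \emph{(ii)} Rule out the K\"ahler-but-not-flat cases. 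For K3 surfaces, $c_2=24\neq 0$, so by Lemma \ref{L:Chern} there is no holomorphic affine structure, hence (being simply connected, in particular not a torus quotient) no flat K\"ahler structure, so (AS) and (FM) both fail while (K) holds. For ruled surfaces over a curve of genus $g$, $c_1^2=8(1-g)$ and $c_2=2(1-g)$ are not both zero (indeed $c_2\neq 0$ unless $g=1$, and even then $c_1^2=0=c_2$ but the surface is not flat since it is not a torus quotient — here one notes a ruled surface is never a finite free quotient of a torus), so again (AS), (FM) fail. \emph{(iii)} Rule out the non-K\"ahler cases of class $VII_0$ and the non-K\"ahler elliptic surfaces: Hopf surfaces and Inoue surfaces have $b_1=1$ odd, hence are not K\"ahler (a compact K\"ahler surface has even $b_1$), so (K) and a fortiori (FM) fail; however they do admit holomorphic affine structures (this is classical — Hopf surfaces are quotients of $\C^2\setminus\{0\}$, Inoue surfaces are quotients of $\hH\times\C$ by affine transformations), so (AS) holds. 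For minimal properly elliptic surfaces with odd $b_1$ and $p_g>0$, the same parity obstruction kills (K); they admit holomorphic affine structures in the listed subclass, so (AS) holds, and $c_1^2=c_2=0$ by the standard elliptic-surface formula $c_2=\sum(\text{fibre contributions})$, which is consistent with (AS). \emph{(iv)} Observe that the remaining Kodaira classes (rational surfaces, non-minimal surfaces, properly elliptic surfaces of even $b_1$, surfaces of general type, etc.) either reduce to the above after passing to a minimal model or are excluded by a nonvanishing Chern number or nonvanishing curvature; in particular surfaces of general type have $c_1^2>0$. Assembling the rows gives exactly the eight entries of the table, with the count $8$ of compact K\"ahler Frobenius \emph{surfaces} coming from: the complex tori/abelian surfaces and the hyperelliptic surfaces, where the hyperelliptic surfaces split into the seven Bagnera--de Franchis families, giving $1+7=8$.

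The main obstacle I anticipate is not any single computation but the \emph{completeness} argument: one must be sure the Enriques--Kodaira list has been exhausted and that no exotic surface (e.g. a class $VII_0$ surface of positive $b_2$, or a non-algebraic K3, or a bielliptic-type quotient not on the list) slips through either as an unexpected Frobenius manifold or as a counterexample to the stated invariants. Handling this cleanly requires the structural input that \emph{flat compact K\"ahler surface} $\Leftrightarrow$ \emph{finite free quotient of a complex $2$-torus by a group with no translations}, together with the Bieberbach-type finiteness and the $2$-dimensional crystallographic classification, so that the ``yes'' rows are provably complete; the ``no'' rows are then comparatively routine, each killed by a parity obstruction on $b_1$ (non-K\"ahler) or a Chern-number obstruction from Lemma \ref{L:Chern} (no affine structure). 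A secondary subtlety is justifying the (AS) column for the non-K\"ahler entries, which relies on the explicit realizations of Hopf and Inoue surfaces as quotients by affine group actions rather than on anything proved earlier in the paper; I would cite Kobayashi and the standard references for this rather than reprove it.
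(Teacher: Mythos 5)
Your proposal follows essentially the same route as the paper: a case-by-case pass through the Enriques--Kodaira classes, with the positive rows (tori and hyperelliptic surfaces) obtained from Theorem \ref{T:Main} and Lemma \ref{L:AlgSurf}, the non-K\"ahler rows killed by the parity/K\"ahler obstruction (the paper cites Miyaoka and Hasegawa where you cite evenness of $b_1$), the K3 and ruled rows killed by the absence of a holomorphic affine structure, and the count $8=1+7$ coming from the torus plus the seven Bagnera--de Franchis families (the paper phrases this via the holonomy groups $1,\Z_2,\Z_3,\Z_4,\Z_6$, which gives the same eight flat K\"ahler surfaces). The only caveat is your blanket claim that Hopf surfaces admit holomorphic affine structures: as the table and Sec.~\ref{S:4.2} make explicit, this holds only for the subclass with $c(m-1)=0$, though this does not affect the (FM) conclusion since the K\"ahler obstruction already rules them out.
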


\medskip 

\subsection{Proof of the statement}
Note that the classification on K\"ahler-Frobenius manifolds is related to the classification of Bieberbach groups and of crystallographic groups. We discuss each case below.

\subsubsection{Elliptic surfaces with even $b_1$}
\begin{prop}
Assume $\sE$ is an elliptic surface admitting an even first Betti number.  
\begin{enumerate}
    \item If $\sE$ is a complex torus, then it is a Forbenius manifold; 
\,

\,
  \item  if $\sE$ satisfies $h^{2,0}=0$ and $P_m\leq1$ for all $m>0$, then, $\sE$ carries the structure of a (pre-)Frobenius manifold. 
\end{enumerate}
 
\end{prop}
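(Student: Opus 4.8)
The plan is to treat the two items by reducing each to a case already handled in the paper. For item (1), if $\sE$ is a complex torus then by Theorem~\ref{T:Main} (or directly by the fact that a complex torus $\C^2/\Lambda$ carries the flat K\"ahler metric inherited from $\C^2$, hence vanishing curvature) the surface is a K\"ahler manifold with vanishing curvature, so it is a hermitian Frobenius manifold. This is essentially immediate and just needs us to recall that the standard metric on $T=\C^2/\Lambda$ is flat and K\"ahler, so all the hypotheses of Theorem~\ref{T:Main} hold verbatim.

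For item (2), the first step is to identify, using the Enriques--Kodaira classification of compact K\"ahler surfaces, which elliptic surfaces with even $b_1$ satisfy $h^{2,0}=0$ and $P_m\leq 1$ for all $m>0$. The conditions $p_g = h^{2,0}=0$ together with boundedness of the plurigenera force Kodaira dimension $\leq 0$; combined with even $b_1$ and the elliptic fibration hypothesis, the only possibilities are bielliptic (hyperelliptic) surfaces and, in the borderline case, certain elliptic surfaces that are finite quotients of a product of elliptic curves with no translations in the group. So the second step is to invoke Lemma~\ref{L:AlgSurf}: such a surface is of the form $M = T/G$ with $T$ a complex torus and $G$ finite, acting freely and containing no translations, and these are precisely hyperelliptic surfaces, which have vanishing curvature. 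The third step is then to apply the construction in the proof of Theorem~\ref{T:Main}: flatness gives a holomorphic affine structure, the metric descends and is K\"ahler in flat coordinates, the third-order derivatives of the potential give the rank three symmetric tensor, and the multiplication $\partial_a\circ\partial_b = \sum_c \Gamma_{ab}^c\partial_c$ on $\cT_M^{1,0}\oplus\cT_M^{0,1}$ is associative because the curvature vanishes. Hence $M$ is a Frobenius manifold; dropping the flatness assumption (keeping only the holomorphic affine structure without associativity) yields the pre-Frobenius statement.

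\textbf{Main obstacle.} The delicate point is the classification step: pinning down exactly which elliptic surfaces with even $b_1$, $h^{2,0}=0$ and $P_m\leq 1$ occur, and arguing that each such surface is either a torus or admits a finite torus cover with no translations so that Lemma~\ref{L:AlgSurf} applies. One must be careful to distinguish the genuinely flat cases (hyperelliptic, where Theorem~\ref{T:Main} gives a full Frobenius structure) from cases where one only gets a holomorphic affine structure and hence merely a pre-Frobenius structure --- this is exactly why item (2) is phrased as \emph{(pre-)Frobenius}. Once the surface is known to be a finite free quotient of a complex torus by a group containing no translations, the rest is a routine transcription of the proof of Theorem~\ref{T:Main}, using that such quotients are flat.
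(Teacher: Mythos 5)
Your item (1) matches the paper: the torus is flat and K\"ahler, so Theorem~\ref{T:Main} applies directly. For item (2), however, you take a genuinely different and more ambitious route than the paper, and that route has a gap. The paper's own proof is very short and does not classify anything: it invokes Miyaoka's criterion (an elliptic surface is K\"ahler iff $b_1$ is even) to get a K\"ahler metric, then Schouten to get a Levi--Civita connection and a local potential $\Phi$, and declares that the triple $(\sE,\Phi,\con{0})$ supplies the pre-Frobenius ingredients; the full Frobenius structure is claimed only in the flat case, i.e.\ for the torus. In particular the numerical hypotheses $h^{2,0}=0$ and $P_m\le 1$ play no role in the paper's argument. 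Your approach instead tries to use those hypotheses to pin the surface down via Enriques--Kodaira and reduce to Lemma~\ref{L:AlgSurf}, aiming at a full Frobenius structure.

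The gap is in your classification step. It is not true that an elliptic surface with even $b_1$, $h^{2,0}=0$ and $P_m\le 1$ for all $m>0$ must be bielliptic or a finite torus quotient without translations. Enriques surfaces are elliptic, have $b_1=0$ (even), $h^{2,0}=0$, and $P_m\le 1$ for every $m$ (since $K^{\otimes 2}\cong\cO$), yet they are covered by K3 surfaces, are not flat, and admit no holomorphic affine structure; rational elliptic surfaces likewise satisfy all the stated numerical conditions with $P_m=0$. For such surfaces Lemma~\ref{L:AlgSurf} does not apply and the transcription of the proof of Theorem~\ref{T:Main} cannot even begin, since there is no flat affine connection. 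So your reduction establishes the (pre-)Frobenius claim only on a proper subfamily of the surfaces covered by the hypotheses. If you want to follow the paper, the honest conclusion for item (2) is the weaker one it actually draws: the K\"ahler metric alone yields the pre-Frobenius data, and Frobeniusity requires the additional flatness that holds for the torus (and for the hyperelliptic quotients), not for every surface satisfying the stated numerical conditions.
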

\begin{proof}
Assume $\sE$ is an elliptic surface with even first Betti number.

\smallskip 

By Miyaoka, an elliptic surface admits a K\"ahler metric iff its first Betti number is even. 

Therefore, by Schouten there exists  a Levi--Civita connection.
We have thus $(\sE,\Phi,\con{0})$ which provide the necessary ingredients for a pre-Frobenius manifold structure. 

It is a Frobenius manifold iff the curvature vanishes. This is the case for the complex torus. 
\end{proof}

\subsubsection{Elliptic surfaces of class VIII$_0$}

   An elliptic surface of class VIII$_0$ is a minimal elliptic surface with Betti number $b_1=1$, $p_g=0$ which is not a Hopf surface.

\medskip 

Although most of the elliptic surfaces with odd $b_1$ admit an affine structure this does not imply that they carry a Frobenius manifolds structure.
\begin{prop}
If $M$ is an elliptic surface of  class VIII$_0$
then $M$ does not admit the structure of a (pre-)Frobenius manifold. 
\end{prop}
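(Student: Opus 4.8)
The statement to establish is that an elliptic surface of class $\mathrm{VIII}_0$ carries no (pre-)Frobenius structure. The guiding principle throughout the surface classification is that a (pre-)Frobenius structure forces, via Lemma~\ref{L:Kahler}, the manifold to be K\"ahler, and — in the Frobenius case — flat with vanishing curvature, hence $c_1 = c_2 = 0$ and an affine holomorphic structure. So the plan is to locate the obstruction at the level of the numerical invariants and the K\"ahler property. First I would recall the defining features of a class $\mathrm{VIII}_0$ surface: it is a minimal elliptic surface with $b_1 = 1$ (odd), $p_g = 0$, and is not a Hopf surface. The key classical input is Miyaoka's criterion, already invoked in the preceding proposition: an elliptic surface admits a K\"ahler metric if and only if its first Betti number is even. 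Since $b_1 = 1$ is odd, no class $\mathrm{VIII}_0$ surface is K\"ahler.

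The proof then runs as follows. Step one: suppose, for contradiction, that $M$ (of class $\mathrm{VIII}_0$) admits the structure of a pre-Frobenius manifold. By definition a (pre-)Frobenius manifold carries a potential function $\Phi$ and a compatible metric which, in flat coordinates, is of Hessian/K\"ahler type; more precisely, invoking Theorem~\ref{T:equ} and Lemma~\ref{L:Kahler}, the hermitian WDVV data force $(M,g)$ to be a K\"ahler manifold (in the complex setting the compatible metric is honestly K\"ahler, and even in the pre-Frobenius case the construction of §\ref{S:HermitianFman} requires a K\"ahler metric on the tangent bundle). Step two: this contradicts Miyaoka's theorem, since $b_1(M) = 1$ is odd and hence $M$ admits no K\"ahler metric whatsoever. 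Therefore $M$ cannot be pre-Frobenius, and \emph{a fortiori} not Frobenius. As a complementary remark one can note that, even granting the affine holomorphic structure that most odd-$b_1$ elliptic surfaces possess (so the "(AS)" column reads "yes" in the table), this is not enough: the affine structure produces the flat torsionless connection $\con{0}$ and the multiplication $\circ$ on the tangent sheaf, but without a \emph{K\"ahler} compatible metric the bilinear form entering Equation~\eqref{E:bili} and the rank-three symmetric tensor $\Phi_{ab\bar c}$ of Lemma~\ref{L:Kahler} cannot be constructed, so the Frobenius axioms fail at the metric step.

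The main obstacle — really the only subtle point — is making precise that the pre-Frobenius axioms genuinely entail the existence of a global K\"ahler metric, not merely a Hermitian one; one must be careful that Lemma~\ref{L:Kahler} applies, i.e. that the conditions of §\ref{S:Kahler} (holomorphic affine structure, Levi--Civita connection, potential function with $g_{a\bar b} = \partial_a\partial_{\bar b}\Phi$, rank-three symmetric tensor) are part of the data of a pre-Frobenius bundle. Granting that, the contradiction with Miyaoka's parity criterion is immediate. I would also add a one-line observation that since no K\"ahler metric exists, the vanishing of the Chern numbers $c_1^2 = c_2 = 0$ recorded in the table is not in itself an obstruction — the obstruction is purely the absence of a K\"ahler (equivalently, here, of a compatible Frobenius) metric — which is exactly why the "(FM)" and "(K)" entries are both "no" for this row.
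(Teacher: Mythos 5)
Your proof is correct and follows essentially the same route as the paper: the paper's own argument also concedes the holomorphic affine structure and locates the sole obstruction in the non-existence of a K\"ahler metric, which is exactly your appeal to Miyaoka's parity criterion for $b_1$. You merely spell out more explicitly the step the paper leaves implicit, namely that the (pre-)Frobenius axioms force a K\"ahler metric via Lemma~\ref{L:Kahler}, so no further changes are needed.
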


\begin{proof}
If $M$ is an elliptic surface of class VIII$_0$
then it admits a holomorphic affine structure.
However, there is no existing K\"ahler metric. So, the necessary requirements for having a (pre-)Frobenius manifold are not fulfilled. 
 
\end{proof}

\subsubsection{Class  VII$_0$: Inoue surfaces}
 An elliptic surface of class VII$_0$ is a minimal elliptic surface with odd Betti number,\, $p_g>0$ and $c_1^2=0$.

Assume $M$ of class  VII$_0$ and that it is neither elliptic nor a Hopf surface.

Then, by Inoue's classification, there exists three main classes of such surfaces of the form $(\hH\times \C)/G$ where $\hH$ is the upper-half plane and $G$ is a group of affine transformations.  Since $G$ is a group of affine transformations hence Inoue's surface admit an affine structure. 

\medskip 

\begin{prop}
Let $M$ be an Inoue surface. Then it does not admit  a (pre-)Frobenius manifold structure. 
\end{prop}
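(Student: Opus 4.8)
The strategy is to show that an Inoue surface fails at least one of the necessary conditions identified earlier for carrying a (pre-)Frobenius structure; namely, that it admits no K\"ahler metric. Recall from Lem.~\ref{L:Kahler} (and the discussion in Sec.~\ref{S:Kahler}) that a manifold satisfying the Hermitian WDVV equation is necessarily K\"ahler, and that the pre-Frobenius axioms require a K\"ahler metric $g$ compatible with the flat torsionless connection $\con{0}$. So it suffices to recall that Inoue surfaces are non-K\"ahler. This is classical: an Inoue surface $M$ has first Betti number $b_1(M)=1$, which is odd, and a compact complex surface is K\"ahler if and only if its first Betti number is even (Miyaoka/Siu, as already invoked in the preceding proposition for elliptic surfaces). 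Hence $M$ cannot be K\"ahler.

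\medskip

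\textbf{Steps.} First I would recall the structure of Inoue surfaces as quotients $(\hH\times\C)/G$ with $G$ a group of affine transformations, so that the \emph{affine structure} part (AS) is present --- this matches the table entry. Second, I would record the numerical invariants: $b_1(M)=1$ (odd), $b_2(M)=0$, $p_g(M)=0$. Third, I would cite the parity criterion for K\"ahlerness of compact complex surfaces: odd $b_1$ forces non-K\"ahler. Fourth, I would invoke Lem.~\ref{L:Kahler}: since a (pre-)Frobenius structure in the Hermitian setting requires the underlying manifold to be K\"ahler (the metric $g$ being a K\"ahler metric compatible with $\con{0}$, and the rank-three tensor $A$ arising from a K\"ahler potential $\Phi$), the absence of any K\"ahler metric immediately obstructs the existence of such a structure. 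Therefore $M$ admits neither a Frobenius nor a pre-Frobenius manifold structure, which is the claim.

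\medskip

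\textbf{Main obstacle.} There is essentially no hard analytic step here: the proof is a short deduction once the non-K\"ahlerness of Inoue surfaces is granted. The only point requiring a little care is to make explicit that it is precisely the \emph{K\"ahler} hypothesis --- not merely the existence of a Hermitian metric or an affine structure --- that is needed in the definition of a Hermitian (pre-)Frobenius manifold used in this paper; the affine/Hessian apparatus alone is not enough, because the potential function $\Phi$ must furnish the K\"ahler metric via $g_{a\bar b}=\partial_a\partial_{\bar b}\Phi$ and the symmetric rank-three tensor via third derivatives, and on an Inoue surface no such global K\"ahler potential exists. Having flagged that, the conclusion follows directly from Lem.~\ref{L:Kahler} together with the $b_1$-parity obstruction, mirroring verbatim the argument already given for elliptic surfaces of class $VIII_0$.
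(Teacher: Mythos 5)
Your proof is correct and follows essentially the same route as the paper: both arguments reduce to the fact that an Inoue surface admits no K\"ahler metric, which is exactly the obstruction to the (pre-)Frobenius structure in the Hermitian setting. The only difference is in how non-K\"ahlerness is justified --- you use the parity criterion $b_1=1$ odd, while the paper cites Hasegawa's result that a four-dimensional solvmanifold is K\"ahler only if it is a complex torus or a hyperelliptic surface; both are standard and equally valid.
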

\begin{proof}
  If $M$ is an Inoue surface (a surface of class VII$_0$ with $b_2=0$), then it admits holomorphic structure.  However, it does not admit a K\"ahler metric and it is not integrable. 
  
  \smallskip 
  
In fact, by Hasegawa, a four-dimensional solvmanifold admits a K\"ahler structure if and only if it is a complex torus or a hyperelliptic surface. 

\smallskip 

Therefore, $M$ cannot have a (pre-)Frobenius structure.   
\end{proof}

\medskip

\subsubsection{Class  VII$_0$: Hopf surfaces}\label{S:4.2}

Recall the following
\begin{dfn}
~
\medskip

\begin{itemize}
    \item[---] A Hopf surface is a compact complex surface whose universal covering space is biholomorphic to $\C^2-\{0\}$.  
    A Hopf surface is of the form $(\C^2-\{0\}/\langle g \rangle )$ where the group $g$ is generated by a polynomial contraction which can be written as:

 $$(x,y)\mapsto (ax+cy^m, by ) $$ 
where $a,b,c$ are complex numbers and $0<|a|\leq |b|<1$ and $(a-b^m)c=0$. 

   \item[---] A Hopf surface is primary if its fundamental group is infinite cyclic. 
\end{itemize} 
\end{dfn}

Every Hopf surface has a primary Hopf surface as a finite unramified covering space. 

\begin{prop}
 Hopf surfaces satisfying the condition that $c=0$ or $(m-1)=0$ admit a holomorphic affine structure and do not admit a (pre-)Frobenius manifold structure.   
\end{prop}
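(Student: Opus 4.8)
The statement to prove concerns Hopf surfaces $M=(\C^2\setminus\{0\})/\langle g\rangle$ with $g\colon(x,y)\mapsto(ax+cy^m,by)$ in the special case $c=0$ or $m=1$, i.e.\ when the contraction is \emph{linear}: $(x,y)\mapsto(ax+c'y,by)$ (with $c'=c$ if $m=1$, and $c'=0$ if $c=0$). The plan is to establish two things: first, that such $M$ carries a holomorphic affine structure; second, that $M$ nonetheless fails to admit a (pre-)Frobenius structure because it carries no K\"ahler metric. The second point invokes the prerequisites of Section~\ref{S:Kahler} and Lem.~\ref{L:Kahler}: a (pre-)Frobenius structure forces a K\"ahler metric compatible with a flat torsionless connection, so the \emph{a fortiori} absence of any K\"ahler metric on a Hopf surface already rules it out.

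\textbf{Step 1: the affine structure.} When the contraction $g$ is linear, $g\in GL(2,\C)$, and the deck group $\langle g\rangle\subset GL(2,\C)\subset \mathrm{Aff}(2,\C)$ acts on $\C^2\setminus\{0\}$ by (restrictions of) affine transformations. I would argue that the developing-map/affine-structure formalism recalled in Section~\ref{S:1.2} applies: the linear charts on $\C^2\setminus\{0\}$, which cover $M$ via the quotient, have transition functions in $\mathrm{Aff}(2,\C)$, so $M$ inherits a holomorphic affine structure and hence a flat torsionless affine connection $\con{0}$. (In the genuinely nonlinear case $c\neq 0$ and $m\geq 2$ this argument breaks down, which is why the hypothesis $c=0$ or $m=1$ is exactly what is needed; this is consistent with the way the preceding Hopf-surface definition records the polynomial contraction.) This step is essentially bookkeeping once one observes the contraction is linear.

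\textbf{Step 2: no K\"ahler metric.} Here I would use the topological obstruction: for any Hopf surface $b_1=1$ is odd, while a compact K\"ahler surface must have even first Betti number (equivalently, even $b_1$ is equivalent to admitting a K\"ahler metric for surfaces, as already cited via Miyaoka in the elliptic-surface case). Since $b_1(M)=1$ for every Hopf surface — these are listed with $b_1=1,\ b_2=0$ in Theorem~\ref{T:ClassSurf} — $M$ admits no K\"ahler metric. By Lem.~\ref{L:Kahler} and the list of necessary conditions in Section~\ref{S:Kahler}, the hermitian WDVV equation, and therefore the (pre-)Frobenius bundle structure, requires a K\"ahler metric compatible with $\con{0}$; the absence of \emph{any} K\"ahler metric is fatal. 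Hence $M$ admits a holomorphic affine structure but not a (pre-)Frobenius structure.

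\textbf{Main obstacle.} The delicate point is Step~1: one must be careful that "holomorphic affine structure'' is meant in the sense used throughout the paper (flat torsionless holomorphic connection / $\mathrm{Aff}(n,\C)$-structure on the tangent bundle) and that the linear contraction really does give transition data landing in $\mathrm{Aff}(2,\C)$ on the \emph{whole} quotient, not merely on $\C^2\setminus\{0\}$; the descent to $M$ is what requires $\langle g\rangle$ to act affinely, which is precisely where $c=0$ or $m=1$ enters. Step~2 is comparatively routine given the parity obstruction to K\"ahlerness already used elsewhere in this section.
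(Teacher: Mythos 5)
Your proposal is correct and follows essentially the same route as the paper: the affine structure comes from the deck transformation being linear (hence affine) precisely when $c=0$ or $m=1$, and the obstruction to a (pre-)Frobenius structure is the absence of any K\"ahler metric on a Hopf surface. You supply more detail than the paper does (the odd-$b_1$ parity obstruction to K\"ahlerness and the explicit descent of the affine charts), but the argument is the same in substance.
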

\begin{proof}
    The only case where the Hopf surface admits a holomorphic affine structure only if $c=0$ or $(m-1)=0$. However, the lack of K\"ahler metric is an obstruction to the existence of a Frobenius manifold. Therefore, this argument  disproves the existence of a Frobenius/ pre-Frobenius structure on Hopf surfaces. 

\end{proof}

\subsubsection{Ruled surfaces}
\begin{lem}
    Ruled surfaces  are not endowed with the structure of a pre-Frobenius manifold.
\end{lem}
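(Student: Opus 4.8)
The plan is to obstruct the pre-Frobenius structure at the level of the underlying holomorphic affine structure, rather than (as for the Hopf and Inoue surfaces) at the level of the K\"ahler metric. Recall that a pre-Frobenius manifold is by construction an affinely flat manifold, so in the compact complex setting it carries a holomorphic affine structure. By Kobayashi's computation of Chern classes from the curvature of a holomorphic affine connection — the same input used in Lemma~\ref{L:Chern} — a compact complex manifold of complex dimension $n$ with a holomorphic affine structure satisfies $c_i = 0$ for all $i \geq n/2$. For a surface $n = 2$, hence $c_1 = c_2 = 0$; in particular any compact complex pre-Frobenius surface has $c_1 = 0$ in $H^2$.

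The second ingredient is the elementary observation that a ruled surface never has vanishing first Chern class. Let $\pi\colon S \to C$ be a ruled surface over a smooth projective curve and let $f \cong \mathbb{P}^1$ be a general fiber of the ruling, so $f^2 = 0$. By the adjunction formula $K_S \cdot f = 2g(f) - 2 - f^2 = -2$, hence
\[ c_1(S)\cdot [f] = -K_S \cdot [f] = 2 \neq 0, \]
so $c_1(S) \neq 0$ in $H^2(S,\mathbb{R})$. (The same computation also recovers $c_2(S) = e(S) = 4(1-g) \neq 0$ and $c_1(S)^2 = 8(1-g) \neq 0$ for $g \neq 1$, matching the table entries, but the pairing with $[f]$ already suffices uniformly in the genus.)

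Combining the two statements finishes the proof: if a ruled surface $S$ admitted a pre-Frobenius manifold structure it would possess a holomorphic affine structure, forcing $c_1(S) = 0$, which contradicts $c_1(S)\cdot[f] = 2 \neq 0$. Hence no ruled surface carries a (pre-)Frobenius structure. I expect essentially no technical difficulty here; the one point to keep straight is that the obstruction is the \emph{affine} one — ruled surfaces are in fact projective, hence K\"ahler, so unlike the class $\mathrm{VII}_0$ and $\mathrm{VIII}_0$ cases the failure is not the absence of a compatible K\"ahler (Hessian) metric but the non-existence of the underlying flat holomorphic affine structure.
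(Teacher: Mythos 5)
Your proof is correct and follows the same route as the paper: the obstruction is the non-existence of a holomorphic affine structure on a ruled surface. The difference is that the paper's proof merely asserts this non-existence (``it is enough to see that ruled surfaces cannot carry any holomorphic affine structures''), whereas you actually derive it, combining Kobayashi's vanishing $c_i=0$ for $i\geq n/2$ (the same input as Lemma~\ref{L:Chern}) with the adjunction computation $c_1(S)\cdot[f]=-K_S\cdot f=2\neq 0$ on a fiber of the ruling; this supplies the justification the paper leaves implicit and is consistent with the entries $c_1^2=8(1-g)=2c_2\neq 0$ in the classification table.
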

\begin{proof}
    It is enough to see that ruled surfaces cannot carry any holomorphic affine structures. So, the conclusion is straightforward.  
\end{proof}
\subsubsection{K3 surfaces}
K3 surfaces do not admit a holomorphic affine structure so they cannot be Frobenius manifolds nor pre-Frobenius manifolds.

\subsubsection{Case of $p_g=0$ and $q\geq 1$: Hyperelliptic surfaces}

A hyperelliptic surface is a compact complex surface, not isomorphic to an abelian surface, which admits a finite \'etale covering by an abelian surface. These surfaces were classified by Enriques–Severi and Bagnera-de Franchis.
The case of hyperelliptic surfaces is discussed in Lem~\ref{L:AlgSurf}. In particular, flat K\"ahler surfaces {\it include} the class of hyperelliptic surfaces.  

\smallskip

In the 2-dimensional case, the group $G$ is cyclic.  The classification of those hyperelliptic surfaces is well known and we recall it. Let $E$ and $F$ be elliptic curves (1-dimensional complex torus). Let $G$ be a group of translations of $E$ acting on $F$. The hyperelliptc surfaces are the following type $(E\times F)/G$ where:
 \begin{enumerate}
     \item$ G=\Z_2$ acts on $F$ by symmetry 
     \item $G=\Z_2\oplus \Z_2$ acts on $F$ by $x\mapsto -x$ , $x\mapsto x+ \epsilon$,  where $\epsilon$ is a point of order 2. 
     \item $G=\Z_4$ acting on 
     $F=\C/(\Z\oplus \Z\imath)$ by $x\mapsto \imath x$
     \item $G=\Z_4\oplus \Z_2$  acting on  $F=\C/(\Z\oplus \Z\imath)$ by 
     $x\mapsto \imath x$, $x\mapsto x+\frac{1+\imath}{2}$
     \item $G=\Z_3$ where $F= \C/(\Z\oplus \Z\rho)$ where $\rho^3=1$ and $\rho\neq1$ and $G$ acts by $x\mapsto \rho x$
     \item $G=\Z_3 \oplus \Z_3$ acts by 
     $x\mapsto \rho x$ and 
     $x\mapsto x+\frac{1-\rho}{3}$
     \item $G=\Z_6$ acts by $x\mapsto -\rho x$.
     
 \end{enumerate}

\begin{lem}
  There exist eight K\"ahler-Frobenius surfaces. These surfaces are complex tori and hyperelliptic surfaces.    
\end{lem}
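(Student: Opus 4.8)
The plan is to prove the statement by combining the two-dimensional Bieberbach/crystallographic classification with the earlier structural results of the paper. First I would observe that, by Theorem~\ref{T:Main} (and Proposition~\ref{P:Kahler--Einstein}), a compact complex K\"ahler surface $S$ is a K\"ahler--Frobenius manifold precisely when it carries a holomorphic affine structure \emph{and} a compatible K\"ahler metric, and that by the curvature argument in the proof of Theorem~\ref{T:Main} the Frobenius (as opposed to merely pre-Frobenius) condition forces the curvature to vanish, i.e. $S$ must be a compact \emph{flat} K\"ahler surface. So the task reduces to enumerating compact flat K\"ahler surfaces.

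Next I would invoke the surface-by-surface analysis already carried out in Section~\ref{S:2D}: the Kodaira classification, together with Lemma~\ref{L:AlgSurf} and the propositions on elliptic surfaces of class $VII_0$ and $VIII_0$, Inoue surfaces, Hopf surfaces, ruled surfaces and K3 surfaces, shows that all of these fail either the affine-structure requirement (ruled surfaces, K3) or the K\"ahler requirement (Inoue, Hopf, $VIII_0$, most $VII_0$). What survives is exactly the class of compact K\"ahler surfaces with universal cover $\C^2$, namely complex tori $T=\C^2/\Lambda$ and their free finite quotients $T/G$ with $G$ containing no translations --- i.e. the abelian and hyperelliptic surfaces, which are precisely the $2$-dimensional compact flat K\"ahler manifolds (equivalently, quotients of $\C^2$ by a $4$-dimensional complex Bieberbach group acting $\C$-linearly on its translation lattice).

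The final step is the counting. I would recall the Bagnera--de Franchis classification of hyperelliptic surfaces (equivalently, the $2$-dimensional complex crystallographic Bieberbach groups with nontrivial holonomy): these come in exactly the seven families $(E\times F)/G$ listed just above the statement, with $G$ cyclic or a product of cyclic groups of orders $2,2\times 2, 4, 4\times 2, 3, 3\times 3, 6$. Adding the single "trivial holonomy" case, the complex $2$-torus (abelian surface), gives $7+1=8$ compact K\"ahler--Frobenius surfaces. Each of these is genuinely Frobenius and not merely pre-Frobenius because it is flat, so the curvature vanishes and the algebra on the tangent sheaf is associative by the argument of Theorem~\ref{T:Main}; conversely every item in the Kodaira table other than these eight has been excluded above.

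The main obstacle I expect is making the "exactly seven hyperelliptic families, hence eight surfaces" count rigorous and self-contained: one must be careful that the Bagnera--de Franchis list is complete and non-redundant (no two families yield biholomorphic surfaces), and that each such surface really does admit the $\C$-affine structure needed for the Frobenius structure --- which it does, since $T/G$ inherits the flat affine structure of $\C^2$ because $G$ acts by complex affine transformations with linear parts of finite order preserving a lattice. The rest is bookkeeping against the Kodaira--Enriques classification recalled in Theorem~\ref{T:ClassSurf}, together with the reductions already established in Section~\ref{S:2D}.
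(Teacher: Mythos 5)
Your proposal is correct and follows essentially the same route as the paper: reduce the lemma to the enumeration of compact flat K\"ahler surfaces and count eight of them (the complex torus plus the seven hyperelliptic families). The paper's own proof is just a terser version of this, organizing the count by holonomy group ($1,\Z_2,\Z_3,\Z_4,\Z_6$) rather than by the explicit Bagnera--de Franchis list, so your write-up simply makes the same argument more explicit.
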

\begin{proof}
One can classify all flat K\"ahler surfaces, according to their holonomy group. The holonomy groups are 1, $\Z_2, \Z_3, \Z_4, \Z_6$. There exist eight flat Ka\"hler surfaces with such holonomy groups. A flat K\"ahler surface is a K\"ahler-Frobenius surface. Therefore, the statement is proved.
   
\end{proof}

\section{Classification of K\"ahler--Frobenius manifolds $n\geq 2$}\label{S:ClassN}
\subsection{}
A classification theorem of compact K\"ahler Frobenius manifolds is provided. Below the table we recall some definitions of Calabi--Yau manifolds and generalized Hantzsche--Wendt manifolds.

\begin{Theorem}\label{T:Class}
Let $n>1$. The set of $n$-dimensional K\"ahler-Frobenius manifolds are classified as follows: 
  
  \centering 
    \begin{itemize} 
\item[]\begin{table}[ht]
    \centering
    \begin{tabular}{|c l|}
    \hline
        &\\
        &\\
          
       $T=\C^n/\Z^n$, $n>1$  & Complex torus  \\
          & \\
          
       $\C^n/\Lambda$,   & $\Lambda$ is a torsion-free complex crystallographic group.\\
       &  \\

       $T/G$, & $G$ a finite group acting on the compact complex torus $T$, \\
       &freely and which contains no translations.\\
          & \\
          
     Hyperelliptic manifolds  &flat K\"ahler manifolds \\
     &  not isomorphic to $T$.\\
    & \\
  General. Hantzsche--Wendt manifolds$^\star$ &  manifolds equipped with a rotation group,\\
   & isomorphic to $(\Z/2\Z)^{n-1}$ and $b_1=0$.\\
   & The manifolds carry a spin structure.\\
      & \\
      
3D Calabi--Yau flat K\"ahler manifolds &   non-trivial holonomy group.  \\
   & \\
   
Calabi--Yau manifolds, & $b_1=\cdots=b_{2n-1}=0$ and $b_n=2^n$  
\\
odd dimension, $n>2$ &  
$b_{2k}$=$n\choose k$ for $0\leq k\leq n.$\\
&\\
    \hline
    \end{tabular}
    \caption{Sources of Frobenius K\"ahler-manifolds}
    \label{tab:my_label}

\end{table}
\end{itemize}
\end{Theorem}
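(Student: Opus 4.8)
The plan is to first show that being a (compact) Kähler--Frobenius manifold is equivalent to being a compact flat Kähler manifold, and then to invoke Bieberbach theory. For the equivalence: by definition a Kähler--Frobenius manifold $M$ carries a flat torsionless affine connection $\con{0}$ that is compatible with the Kähler metric $g$; by the fundamental theorem of Riemannian geometry such a connection is the Levi--Civita connection of $g$, so $g$ is flat, i.e. $M$ has vanishing curvature. (This is strictly stronger than $c_1(M)=0$, which by Calabi--Yau only yields Ricci-flatness; compare the discussion after Theorem \ref{T:Main}.) Conversely, Theorem \ref{T:Main} shows that a compact complex Kähler manifold with vanishing curvature is a hermitian Frobenius manifold — alternatively, Equation \ref{E:C} identifies the obstruction to associativity of $(T_M,\circ)$ with the curvature tensor, so vanishing curvature is precisely what promotes the pre-Frobenius structure to a Frobenius structure. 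Hence $M$ is a compact Kähler--Frobenius manifold if and only if $M$ is a compact flat Kähler manifold.

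\textbf{Bieberbach structure and complex crystallographic reduction.} Next I would apply the Bieberbach structure theory already used above to prove finiteness: a compact flat Riemannian manifold of real dimension $2n$ is $M=\R^{2n}/\Gamma$, where $\Gamma$ is a Bieberbach group fitting into
\[ 0\longrightarrow \Lambda \longrightarrow \Gamma \longrightarrow G \longrightarrow 1, \]
with $\Lambda\cong\Z^{2n}$ a lattice of translations and $G$ the finite holonomy group acting faithfully on $\Lambda$. Imposing the Kähler condition forces the holonomy representation to preserve the complex structure $J$ as well as the metric, i.e. $G\hookrightarrow U(n)\subset O(2n)$; equivalently $\Gamma$ is a torsion-free complex crystallographic group and $M=\C^n/\Gamma$. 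This is exactly the entry ``$\C^n/\Lambda$, $\Lambda$ torsion-free complex crystallographic'' of the table, and it is the master description: every other row is a subcase, and conversely every such quotient is a compact flat Kähler manifold, hence Kähler--Frobenius by the previous step.

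\textbf{Identifying the named families.} It then remains to match the remaining rows of the table to the possible holonomy groups $G\subset U(n)$. When $G$ is trivial, $M=\C^n/\Lambda$ is a complex torus $T$, the distinguished lattice $\Lambda=\Z^{2n}$ giving $T=\C^n/\Z^n$ in the paper's notation. When $G$ is nontrivial it acts freely (automatic, since $\Gamma$ is torsion-free), and by Bieberbach's first theorem $\Lambda$ is the unique maximal normal abelian subgroup, so $G$ contains no translations — this is the row $T/G$. The hyperelliptic manifolds are, by definition, the $T/G$ with $G$ nontrivial that are not biholomorphic to a torus; they are Kähler because a finite group of holomorphic isometries descends the Kähler form, and for $n=2$ this is Lemma \ref{L:AlgSurf} together with Theorem \ref{T:ClassSurf}. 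The Hantzsche--Wendt manifolds are the subfamily with $b_1=0$ and rotation group $(\Z/2\Z)^{n-1}$ admitting a $J$-invariant flat metric (whence a spin structure), and the flat Calabi--Yau entries — the three-dimensional ones with nontrivial holonomy, and the odd-dimensional family with $b_1=\cdots=b_{2n-1}=0$, $b_n=2^n$, $b_{2k}=\binom{n}{k}$ — are precisely the flat Kähler manifolds whose holonomy lies in $SU(n)$, distinguished among each other by their Betti numbers. Finiteness of the list for each fixed $n>1$ is Bieberbach's third theorem, i.e. the Corollary proved above.

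\textbf{Main obstacle.} The structural steps are formal; the real work lies in the last step. One must invoke the known classification of complex Bieberbach groups and of compact flat Kähler manifolds (cf. \cite{FW} and, for $n=2$, Theorem \ref{T:ClassSurf}), and in particular verify the \emph{existence} direction: that the Hantzsche--Wendt and Calabi--Yau families genuinely admit flat \emph{Kähler} — not merely flat Riemannian — structures, i.e. that their holonomy representations can be conjugated into $U(n)$, respectively $SU(n)$. In contrast with odd real dimension, where no Kähler structure exists, here the $J$-invariance of the holonomy must be checked family by family, and this case analysis is where any delicate point of the proof resides.
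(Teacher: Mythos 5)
Your proposal is correct and follows the same underlying strategy as the paper — identify K\"ahler--Frobenius manifolds with compact flat K\"ahler manifolds and then read off the known classification — but it is substantially more complete than what the paper actually writes. The paper's proof consists of two sentences: it asserts that the manifolds in the table are flat K\"ahler (delegating the verification to \cite{DHS,R}) and then applies Theorem~\ref{T:Main} to conclude they are Frobenius; in other words it only proves the \emph{forward} inclusion (each listed manifold is a K\"ahler--Frobenius manifold) and never argues that the list is exhaustive. You supply exactly the missing half: the observation that the flat torsionless connection $\con{0}$ of a K\"ahler--Frobenius manifold, being metric-compatible, must coincide with the Levi--Civita connection of $g$, so that $g$ itself is flat — this is the step that turns ``sources of'' into ``classification of,'' and it is nowhere in the paper. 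Likewise, your explicit Bieberbach reduction ($M=\C^n/\Gamma$ with $\Gamma$ a torsion-free complex crystallographic group, holonomy in $U(n)$, the named rows as holonomy subcases) is what the paper implicitly outsources to \cite{DHS}. Your closing caveat is also well placed: the genuinely nontrivial content — that the Hantzsche--Wendt and flat Calabi--Yau families admit $J$-invariant flat metrics, i.e.\ that their holonomy representations land in $U(n)$ resp.\ $SU(n)$ — is precisely the part the paper dismisses as ``easily checked by hand,'' and neither you nor the paper carries out that case-by-case verification.
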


\subsection{}
Recall the definition of a generalized Hantzsche--Wendt manifolds.


A Hantzsche--Wendt $n$-manifold is a (non-necessarily) orientable flat  manifold in dimension $n\geq 3$, with holonomy group  $\Z_2^{n-1}$. This generalises the original version of a  Hantzsche--Wendt manifold, which is a 3-dimensional manifold with vanishing first Betti number $b_1=0$. The non-orientable case occurs for each dimension $n\geq2$. The orientable case occurs only in each odd dimension $n\geq 3.$ Any complex orientable Hantzsche--Wendt manifold has a spin structure. The holonomy representation $\Z^{n-1}_2\to U(n)$ of an orientable complex Hantzsche--Wendt manifold has its image in $SU(n)$.

\, 

\subsection{Proof of Theorem \ref{T:Class}}
\begin{proof}
To prove this theorem it is enough to verify that the manifolds listed in the table are flat K\"ahler manifolds. This can be easily checked by hand and also has been proved in the constructions of \cite{DHS,R}. Therefore, the manifolds listed in the table are flat K\"ahler manifolds.
  It remains now to apply Theorem~\ref{T:Main}. The statement shows that a compact complex K\"ahler manifold with vanishing curvature is Frobenius. Therefore, we have a list of classes of K\"ahler-Frobenius manifolds. 
This ends the proof. 
\end{proof}

\begin{cor}
In low dimensions there exists finitely many topological types of K\"ahler-Frobenius manifolds. In particular one can list all those classes of manifolds.  

  \begin{itemize}
       \item there exist 8 compact K\"ahler 2-dimensional Frobenius manifolds.  \\
       \item There exist 174  compact 3-dimensional K\"ahler Frobenius manifolds.  
  \end{itemize}
\end{cor}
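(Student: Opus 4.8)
The plan is to reduce the statement to the classification of Bieberbach groups. By Theorem~\ref{T:Main}, together with the definition of a K\"ahler--Frobenius manifold in Section~\ref{S:KFmfd} (in which $\con{0}$ is the flat Levi--Civita connection of the K\"ahler metric), a compact K\"ahler manifold of complex dimension $n$ is a K\"ahler--Frobenius manifold if and only if its metric is flat, i.e. if and only if it is a compact flat K\"ahler manifold; the converse direction here uses only that an affinely flat, metric-compatible connection forces vanishing Riemannian curvature. So the counting problem becomes: how many compact flat K\"ahler $n$-folds are there, up to the appropriate equivalence? The same reduction underlies the classification Theorem~\ref{T:Class}.

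First I would invoke Bieberbach's theorems. A compact flat Riemannian manifold of real dimension $m$ is a quotient $\R^m/\Gamma$ with $\Gamma$ a torsion-free crystallographic (Bieberbach) group; its holonomy is the finite point group $P$ appearing in $0\to V\to\Gamma\to P\to 1$ with $P\leq GL(m,\Z)$; two such manifolds are homeomorphic (equivalently diffeomorphic, equivalently affinely diffeomorphic) exactly when their fundamental groups are isomorphic; and in each fixed dimension there are only finitely many isomorphism classes of Bieberbach groups. The K\"ahler condition is the requirement that $V\cong\R^{2n}$ carry a $P$-invariant complex structure, i.e. that $P$ be conjugate into $U(n)\subset O(2n)$; this selects a finite sublist from an already finite list. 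Hence for every $n$ there are only finitely many $n$-dimensional K\"ahler--Frobenius manifolds, and the list is in principle effectively computable, which is the first assertion.

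For $n=2$ (real dimension $4$) I would quote the work already carried out in Section~\ref{S:2D}: the possible holonomy groups of a flat K\"ahler surface are $1,\ \Z/2\Z,\ \Z/3\Z,\ \Z/4\Z,\ \Z/6\Z$, and running through the associated Bieberbach groups yields the complex tori and abelian surfaces (trivial holonomy) together with the seven Bagnera--de Franchis families of hyperelliptic surfaces, $8$ manifolds in all (this is the concluding lemma of that section). For $n=3$ (real dimension $6$) I would appeal to the explicit classification of $6$-dimensional Bieberbach groups whose point group embeds in $U(3)$, carried out in \cite{DHS,R}: it produces exactly $174$ compact flat K\"ahler $3$-folds, hence $174$ three-dimensional K\"ahler--Frobenius manifolds.

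The main obstacle is the second bullet. Unlike the surface case, which is small enough to enumerate by hand from the holonomy data, the $6$-dimensional count is genuinely a reduction to a (computer-assisted) classification: one needs the full list of $6$-dimensional crystallographic groups, the selection of the torsion-free ones, and then of those whose point group is conjugate into $U(3)$. Accordingly, that part of the write-up will not be self-contained but will cite \cite{DHS,R}; the only point to verify internally is that ``K\"ahler--Frobenius $3$-fold'' coincides with ``compact flat K\"ahler $3$-fold'' as used there, which is exactly the equivalence established in the first paragraph.
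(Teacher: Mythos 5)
Your proposal is correct and follows essentially the same route as the paper: reduce the count to the classification of compact flat K\"ahler manifolds (equivalently, torsion-free crystallographic groups with point group conjugate into $U(n)$) and quote the computations of \cite{DHS} for the numbers $8$ and $174$. The only difference is that you spell out the Bieberbach-theoretic finiteness argument and the equivalence ``K\"ahler--Frobenius $=$ compact flat K\"ahler'' explicitly, whereas the paper's proof leaves these implicit by citing Theorem~\ref{T:Class} directly.
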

\begin{proof}
By Theorem \ref{T:Class} one has a classification of different sources of K\"ahler--Frobenius manifolds. Using the computations on the number classes of flat K\"ahler manifolds of dimensions 2 and 3 and the classification criterion in \cite{DHS} we deduce that there exist 8 compact 2-dimensional K\"ahler--Frobenius manifolds and  174  compact 3-dimensional K\"ahler--Frobenius manifolds.  
\end{proof}
\subsection{Hermitian--Einstein Space of Connections}
We discuss the construction depicted throughout these sections.

We prove that:

\begin{Theorem}\label{T:HE}
  The pencil of connections on the tangent bundle to a K\"ahler--Frobenius manifold with trivial canonical bundle and $c_1=0$ forms a pencil of Hermitian--Einstein connections.   
  
\end{Theorem}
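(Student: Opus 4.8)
The plan is to show that for a K\"ahler--Frobenius manifold $M$ with trivial canonical bundle and $c_1(M)=0$, every connection $\con{\lambda}=\con{0}+\lambda\eta$ in the pencil constructed in the proof of Theorem~\ref{T:Main} satisfies the Hermitian--Einstein condition on the tangent bundle, i.e.\ its curvature $F_{\con{\lambda}}$ is of type $(1,1)$ and its contraction with the K\"ahler form is a constant multiple of the identity endomorphism. First I would recall from Theorem~\ref{T:Main} (and the K\"ahler-geometry discussion of Sections \ref{S:Kahler}--\ref{S:Kahler2}) that the base connection $\con{0}$ is flat, torsionless, holomorphic and compatible with the K\"ahler metric $g$; in particular its curvature vanishes identically, so trivially $F_{\con{0}}=0$ is of type $(1,1)$ and the mean curvature $\Lambda_\omega F_{\con{0}}=0$, which is the Hermitian--Einstein equation with Einstein factor zero. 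Since $c_1(M)=0$ and the canonical bundle is trivial, the degree/slope of $TM$ vanishes, so the expected Einstein factor is indeed $0$, which is the consistency check one wants.

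Next I would analyze the deformation term. Writing $\con{\lambda}=\con{0}+\lambda\eta$ with $\eta\in\Omega^1_M\otimes\mathrm{End}(TM)$ the $1$-form encoding the multiplication $X\circ Y = i_X(\eta)(Y)$, the curvature transforms as $F_{\con{\lambda}}=F_{\con{0}}+\lambda\,d^{\con{0}}\eta+\lambda^2\,\eta\wedge\eta = \lambda\,d^{\con{0}}\eta+\lambda^2\,\eta\wedge\eta$. The key structural input is that $\eta$ has a pure type: in the flat holomorphic coordinates the nonzero structure constants are $\Gamma^c_{ab}$ and $\Gamma^{\bar c}_{\bar a\bar b}$ (step 4 of the proof of Theorem~\ref{T:Main}), and the potentiality/K\"ahler identities $\partial_a\Phi_{bcd\cdots}=\partial_b\Phi_{acd\cdots}$ together with the vanishing of the curvature (which the proof of Lem.~\ref{L:1to2} shows is equivalent to associativity $X\circ(Y\circ Z)=(X\circ Y)\circ Z$) force exactly the cancellations that make $d^{\con{0}}\eta$ and $\eta\wedge\eta$ vanish, or at least combine into something of type $(1,1)$ with vanishing $\Lambda_\omega$-trace. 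I would make this precise by computing $F_{\con{\lambda}}$ in flat coordinates, using $\con{0}(dx^i)=0$ to reduce $d^{\con{0}}\eta$ to ordinary exterior derivatives of the $\Gamma$'s, invoking the symmetry relations \eqref{E:rk3} and the associativity relations \eqref{E:asso}/\eqref{E:Ass} to kill the holomorphic--holomorphic and antiholomorphic--antiholomorphic components, and checking that the surviving mixed component contracts to zero against $g^{a\bar b}$ because $g_{ab}=g_{\bar a\bar b}=0$.

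Then I would assemble: $F_{\con{\lambda}}$ is of type $(1,1)$ for all $\lambda$, and $\Lambda_\omega F_{\con{\lambda}}=0=\frac{2\pi}{\mathrm{Vol}}\,\mu(TM)\cdot\mathrm{Id}$ since $\mu(TM)=0$ by triviality of $K_M$ and $c_1(M)=0$; hence every member of the pencil is a Hermitian--Einstein connection, and the pencil is a pencil of Hermitian--Einstein connections, as claimed. I expect the main obstacle to be the type verification for the quadratic term $\eta\wedge\eta$: unlike $d^{\con{0}}\eta$, whose vanishing follows fairly directly from potentiality, the term $\eta\wedge\eta$ encodes precisely the associator, so one must argue carefully that associativity (equivalently, vanishing curvature of $M$, which is the standing hypothesis) makes this bracket-type term vanish identically rather than merely have the right type; this is where the isomonodromic/flatness-of-the-pencil statement from Lem.~\ref{L:2-3R} (``flatness of the pencil of connections is equivalent to associativity'') does the real work, and I would lean on it to conclude $F_{\con{\lambda}}\equiv 0$, which makes the Hermitian--Einstein property immediate and even the strongest possible (flat, hence trivially Hermitian--Einstein with factor $0$).
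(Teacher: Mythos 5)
Your proposal is correct and follows essentially the same route as the paper: both arguments reduce to the observation that the Frobenius structure forces the whole pencil $\con{\lambda}=\con{0}+\lambda\eta$ to be flat (the paper asserts this in one line, you unpack it via potentiality killing $d^{\con{0}}\eta$ and associativity killing $\eta\wedge\eta$), after which $\operatorname{tr}(F_{\con{\lambda}})=0$ gives the Hermitian--Einstein condition with Einstein factor zero, consistent with the vanishing slope coming from $c_1=0$ and the trivial canonical bundle. The only difference is presentational: the paper frames the conclusion with a digression on stability, Uhlenbeck--Yau and the Kobayashi--Hitchin correspondence that is not needed for the operative final step, while you supply the explicit type and trace verification that the paper leaves implicit.
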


\,

\begin{proof}
    
Assume $E$ is a holomorphic vector bundle of rank $r$ over a complex manifold $M$ of dimension $n$. Assume $(z_1,\cdots, z_n)$ are local coordinates on $M$ and that $M$ is endowed with a metric given by $ds^2=2\sum g_{j\bar{k}}dz^jd\bar{z}^k$.

\,

Let $h$ be a Hermitian fibre--metric on $\pazocal{E}$. Assume $\{s_1,\cdots, s_r\}$ are linearly independent local holomorphic sections of $\pazocal{E}$. Put $h_{i,\bar{j}}=h(s_i,{\bar{s}_j})$.

\,

With respect to the sections $\{s_1,\cdots, s_r\}$ and the local coordinates $(z_1,\cdots, z_n)$ in $M$ the curvature of the Hermitian connection in the bundle $\pazocal{E}$ is given by

\,

\[\frac{\partial^2 h_{\alpha\bar{\beta}}}{\partial z^j \partial \bar{z}^k}-h^{e\bar{\gamma}} \frac{\partial h_{\alpha\bar{\gamma} }}{\partial z^j}\frac{\partial h_{e\bar{\beta}}}{\partial \bar{z}^k}.\]

\,

The quadruple $(\sE,M,h,ds^2)$ is a Hermitian--Einstein vector bundle if the Ricci tensor is proportional to $h$: 
$$R_{i\bar{j}}=c\cdot h_{i\bar{j}},$$ where $c$ is a constant.

\, 

By Kobayashi, if the quadruple $(E,M,h,ds^2)$ is an Hermitian--Einstein vector bundle then it is {\it not} instable (in the sense of Bogomolov \cite{Ko2}). Now, by Uhlenbeck--Yau \cite{UY}, a {\it stable} vector bundle over a compact K\"ahler manifold admits a unique Hermitian--Einstein metric.

\,

By the Kobayashi--Hitchin correspondence \cite{Ko3,Hit1}, stable vector bundles over a complex manifold are related to Einstein--Hermitian vector bundles. In particular,  going back on more time to Kobayashi's statement, if $M$ is a compact K\"ahler--Einstein manifold  the tangent bundle $TM$ is not instable and therefore, we can conclude by the above argument that $TM$ is a  Einstein--Hermitian vector bundle.

\,
 
On holomorphic vector bundles over compact complex manifolds, we have the existence of Hermitian--Yang--Mills connections (or Hermitian--Einstein metrics). The Hermitian--Yang--Mills connection is a specification of the Yang--Mills connection to the case of {\it hermitian} vector bundles over a complex manifold.  
 
\,

Let $\nabla_0$ be a Hermitian--Einstein connection. Let $\cA$ be the affine space of connections. Any connection $\nabla\in \cA$ can be written as $\nabla = \nabla_0 + t \eta$, where $\eta$ is a section in
 $\Gamma(T^*M\otimes \pazocal{E}^*\otimes \pazocal{E})$ and $t$ is a real parameter. The curvature $F_\nabla=d\nabla-\nabla\wedge \nabla$ of the Hermitian connection is a section of $End \pazocal{E} \otimes \Lambda^2 T^*M$. 
 
 \, 
 
 A Hermitian--Yang--Mills connection is a unitary connection $\nabla$ for the Hermitian metric $h$ which satisfies 
$F_\nabla^{0,2}=0$ i.e. vanishing of the curvature and the trace $tr$ of the curvature satisfies the following relation $tr(F_\nabla)=\kappa Id_\pazocal{E}.$

\,

 Given a K\"ahler metric on $M$, define the operation $tr: \Gamma (End \pazocal{E} \otimes T_{1,1}^* M) \to \Gamma(End \pazocal{E})$, where 
$ tr(F)=\sum g^{j\bar{k}}F_{aj\bar{k}}^b$. 

\, 

We have a Hermitian--Yang--Mills if there exists a Hermitian metric $h$ for which the Hermitian curvature $F$ satisfies $tr(F)= \kappa Id$.
 If we take $\pazocal{E} = TM$ to be the holomorphic tangent bundle and $h = g$ then the condition above becomes the familiar K\"ahler-Einstein condition (definition from Uhlenbeck--Yau).

The curvature of the deformed connections is given by the following formula $\tilde{F}=F_{\nabla_0+\lambda\eta}= F_{\nabla_0}+ d_{\nabla_0} + 1/2[\eta\wedge \eta].$ This is a Hermitian--Einstein connection if $Tr(F_{\nabla+\lambda\eta})=0.$
 
By definition $tr(F_{\nabla_0+\lambda\eta})=\sum g^{j\bar{k}}\tilde{F}_{aj\bar{k}}^b$.

By hypothesis, $\tilde{F}_{aj\bar{k}}^b$ is flat because it is a Frobenius manifold. 
So, $tr(\tilde{F})=0.$

Therefore, we can conclude that pencil of connections  on the Frobenius manifold is a pencil of Hermitian Yang--Mills connections.

\end{proof}
\subsection{Conclusion and remarks}
To conclude, we have determined which classes of K\"ahler manifolds satisfy the associativity equations, implied by the Frobenius manifold structure. Certain classes of such K\"ahler--Frobenius manifolds have direct relations to theta functions, which can be interesting both from an analysis perspective and a number theory perspective.

We want to remark that our construction connects also to Joyce structures \cite{B,J07}, which form a close notion to Frobenius manifold. A Joyce structure on $M$ involves a pencil of flat, meromorphic Ehresmann connections on the tangent bundle $TM$. Somehow, the flatness condition involves there a meromorphic function (called a Joyce function).

\end{document}